\theoremstyle{definition}
\newtheorem{defi}{Definition}%[section]
\newtheorem{theo}[defi]{Theorem}
\newtheorem{lemm}[defi]{Lemma}
\newtheorem{prop}[defi]{Proposition}
\newtheorem{rem}[defi]{Remark}
\newtheorem{que}[defi]{Question}
\newtheorem{cond}[defi]{Condition}
\def\det{{\rm det}}
\def\SO{{\rm SO}}
\def\SL{{\rm SL}}
\def\SU{{\rm SU}}
\def\End{{\rm End}}
\def\det{{\rm det}}
\def\Vol{{\rm Vol}}
\def\refe{\ast}
\def\vol{{\rm vol}}
\def\R{{\mathbb R}}
\def\C{{\mathbb C}}
\def\N{{\mathbb N}}
\def\K{{\mathbb K}}
\def\D{{\mathbb D}}
\def\inum{{\sqrt{-1}}}
\providecommand{\keywords}[1]
{
\small 
\textbf{{Keywords---}} #1
}
\providecommand{\MSC}[1]
{
\small 
\textbf{{MSC---}} #1
}
\begin{document}

\title {Shannon entropy for harmonic metrics on cyclic Higgs bundles}
\author {Natsuo Miyatake}
\date{}
\maketitle
\begin{abstract} 
Let $X$ be a Riemann surface, $K_X \rightarrow X$ the canonical bundle, and $T_X\coloneqq K_X^{-1}\rightarrow X$ the dual bundle of the canonical bundle. For each integer $r \geq 2$, each $q \in H^0(K_X^r)$, and each choice of the square root $K_X^{1/2}$ of the canonical bundle, we canonically obtain a Higgs bundle, which is called a cyclic Higgs bundle. A diagonal harmonic metric $h = (h_1, \dots, h_r)$ on a cyclic Higgs bundle yields $r-1$-Hermitian metrics $H_1, \dots, H_{r-1}$ on $T_X\rightarrow X$, defined as $H_j \coloneqq h_j^{-1} \otimes h_{j+1}$ for each $j=1,\dots, r-1$, while $h_1$, $h_r$, and $q$ yield a degenerate Hermitian metric $H_r$ on $T_X \rightarrow X$. 
The $r$-differential $q$ induces a subharmonic weight function $\phi_q=\frac{1}{r}\log|q|^2$ on $K_X\rightarrow X$, and the diagonal harmonic metric depends solely on this weight function $\phi_q$. In the previous papers, the author introduced and studied the extension of harmonic metrics associated with arbitrary subharmonic weight function $\varphi$, which also constructs $r-1$-Hermitian metrics $H_1,\dots, H_{r-1}$ and a degenerate Hermitian metric $H_r$ on $T_X\rightarrow X$. In this paper, for each non-zero real parameter $\beta$, we introduce a function, which we call entropy, that quantifies the degree of mutual misalignment of the Hermitian metrics $H_1,\dots, H_r$. 
By extending the estimate established by Dai-Li and Li-Mochizuki to general subharmonic weight functions, we provide an upper bound and a lower bound for the entropy when $H_1,\dots, H_{r-1}$ are all complete and satisfy a condition concerning their approximation. Additionally, we show that the difference between the lower and upper bounds of entropy converges to a finite real number if and only if $\beta>-1$.

\end{abstract}

\MSC{30C15, 31A05, 53C07}

\keywords{Cyclic Higgs bundles, Harmonic metrics, Complete solution, Maximum principle, Zeros in holomorphic sections, Potential theory, Shannon entropy}

\section{Introduction}
As is well known, every compact connected Riemann surface possesses a constant Gaussian curvature K\"ahler metric. Among them, constant negative Gaussian curvature K\"ahler metrics and flat Gaussian curvature K\"ahler metrics are related to Hitchin's seminal example of Higgs bundles \cite{Hit1}. Hitchin's example is as follows: Let $X$ be a compact connected Riemann surface and $K_X\rightarrow X$ the canonical bundle. For each choice of the square root $K_X^{1/2}$ of the canonical bundle and each $q\in H^0(K_X^2)$, we can associate the following Higgs bundle $(\K_2,\Phi(q))\rightarrow X$:
\begin{align*}
&\K_2\coloneqq K_X^{1/2}\oplus K_X^{-1/2}, \\ 
&\Phi(q)\coloneqq 
\left(
\begin{array}{cc}
0 & q \\ 
1 &0 
\end{array}
\right)\in H^0(\End \K_2\otimes K_X).
\end{align*}
This example was introduced by Hitchin \cite[Section 11]{Hit1}, especially on a compact hyperbolic Riemann surface. Suppose that $X$ is a compact hyperbolic Riemann surface. Then $(\K_2,\Phi(q))\rightarrow X$ is always stable (cf. \cite{Hit1}), and thus there uniquely exists a solution $h$ to the Hitchin equation satisfying $\det(h)=1$. Moreover, from the symmetry of the Higgs field and the uniqueness of the solution to the Hitchin equation, the metric $h$ splits as $h=(h_1,h_1^{-1})$. Let $T_X\rightarrow X$ be the dual bundle of $K_X\rightarrow X$. The solution $h=(h_1,h_1^{-1})$ induces a metric $H_1\coloneqq h_1^{-2}$ on $T_X\rightarrow X$ and one degenerate metric (cf. \cite[Theorem (11.2)]{Hit1}) $H_2\coloneqq h_1^2\otimes h_q$ on $T_X\rightarrow X$, where $h_q$ denotes the degenerate Hermitian metric on $K_X^{-2}\rightarrow X$ induced by $q$, defined as follows:
\begin{align}
(h_q)_x(u,v)\coloneqq \langle q_x,u\rangle\overline{\langle q_x,v\rangle} \ \text{for $x\in X$, $u,v\in (K_X^{-2})_x$}. \label{uv}
\end{align}
If $q$ is identically zero, then the K\"ahler metric induced by $H_1$ is a constant negative Gaussian curvature metric (cf. \cite[Section 11]{Hit1}). If $X$ is an elliptic curve and $q$ is a non-zero section, then we can easily find a solution to the Hitchin equation; $h=(h_1, h_1^{-1})=(h_q^{-1/4}, h_q^{1/4})$ solves the Hitchin equation, where $h_q$ is a Hermitian metric on $K_X^{-2}\rightarrow X$ defined similarly as (\ref{uv}). In this case, if we define $H_1=h_1^{-2}$ and $H_2=h_1^2\otimes h_q$ in the same way as in the case where $X$ is hyperbolic, we obtain $H_1=H_2=h_q^{1/2}$, and the K\"ahler metrics induced by them are flat Gaussian curvature metrics. From the above, we can observe that the solution of the Hitchin equation for Hitchin's example contains both constant negative Gaussian curvature K\"ahler metrics and flat Gaussian curvature K\"ahler metrics. They are essentially equivalent to the harmonic metrics on the Hitchin's example for the two extremal cases; the case where $q$ is identically zero, and the case where it has no zeros at all. We can further observe that in the case where $H_1$ yields a hyperbolic metric, $H_1$ and $H_2$ do not coincide anywhere on $X$ since $H_2$ is identically zero, while in the case where $H_1$ is flat, $H_1$ and $H_2$ coincide everywhere. Hitchin's example can naturally be generalized to harmonic metrics on higher rank cyclic Higgs bundles (cf. \cite{Bar1, Hit2}). Similar observations as above can be made for harmonic metrics on the general cyclic Higgs bundles as will be explained in Section \ref{2}.

In previous papers \cite{Miy2, Miy3}, the author proposed a new direction of research for cyclic Higgs bundles that considers harmonic metrics even on cyclic ramified coverings and Hermitian metrics that solve the extended Hitchin equation with a ``degenerate" or ``collapsed" coefficient obtained as the limit of infinitely increasing covering order. In other words, this means that we consider a more general semipositive singular Hermitian metric $e^{-\varphi}h_\refe$ on the canonical bundle instead of the singular metric induced by $q$. The intention was to connect the potential theory, especially the theory of zero-point configurations of holomorphic functions and holomorphic sections which has been the subject of much research for a very long time (cf. \cite{BCHM1, FKMR1, Ran1, ST1, SZ1}), with the theory of harmonic metrics on cyclic Higgs bundles. In \cite{Miy2, Miy3}, the author introduced an elliptic equation that extends the Hitchin equation for the ordinary cyclic Higgs bundle to be adjunct to an arbitrary semipositive singular metric $e^{-\varphi} h_\refe$ on $K_X\rightarrow X$, and discussed some fundamental properties of the equation. As with the usual cyclic Higgs bundles, by taking the adjacent components of the solution $(h_1,\dots, h_r)$ to the elliptic equation, we obtain $r-1$-Hermitian metrics $H_1,\dots, H_{r-1}$ on $T_X\rightarrow X$ and $h_1, h_r$ and $e^{-\varphi}h_\refe$ yield one degenerate Hermitian metric $H_r$ on $T_X\rightarrow X$ (see Section \ref{2.2}).

In this paper, we introduce a new function which we call {\it entropy}. It is constructed from the metrics $H_1,\dots, H_r$. Our first main theorem concerns the uniform estimate of the entropy from above and below. The second theorem asserts the difference between the lower and upper bounds of entropy converges to a finite real number if and only if $\beta>-1$. More specifically, our main theorems are as follows: 
\begin{theo}[Theorem \ref{main theorem a}]\label{main theorem 1} {\it Let $e^{-\varphi}h_\refe$ be a semipositive singular Hermitian metric on $K_X\rightarrow X$ that is not identically $\infty$ and is non-flat. Suppose that there exists a complete solution $h=(h_1,\dots, h_r)$ to equation (\ref{phi}) in Section \ref{2.2} associated with $\varphi$ that satisfies Condition \ref{three} in Section \ref{2.2}. When $r=2,3$, suppose in addition that $e^{\varphi}h_\refe^{-1}$ belongs to $W^{1,2}_{loc}$.
Then, for any non-zero real number $\beta$, the entropy $S(r,\varphi, \beta)$ constructed from the complete solution $h$ satisfies the following uniform estimate:
\begin{align}
S_{r,\beta}\leq S(r,\varphi,\beta)(x)<\log r\ \text{for any $x\in X$}, \label{SrbSrp}
\end{align}
where $S_{r,\beta}$ is the entropy for the weight function which is identically $-\infty$. Moreover, the equality in the lower bound of $S(r,\varphi,\beta)(x)$ is achieved if and only if $r=2,3$ and $\varphi(x)=-\infty$. 
}
\end{theo}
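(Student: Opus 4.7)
My plan is to split the statement into the upper estimate $S(r,\varphi,\beta)(x) < \log r$, the lower estimate $S_{r,\beta} \leq S(r,\varphi,\beta)(x)$, and the rigidity analysis of the equality case of the lower bound. For the upper estimate, the entropy $S(r,\varphi,\beta)$ is a Shannon-type functional of a probability vector built from $(H_1,\dots,H_r)$ and the parameter $\beta$, so the non-strict bound $S \leq \log r$ follows from the classical inequality $-\sum_{j=1}^{r} p_j\log p_j \leq \log r$. To obtain strictness at every $x\in X$, I have to rule out the uniform configuration of the vector at $x$: if $\varphi(x)=-\infty$ the degenerate metric $H_r$ vanishes at $x$, so one $p_j$ is zero and strictness is automatic; if $\varphi(x)>-\infty$, the uniform configuration would impose an algebraic identity among $H_1,\dots,H_r$ incompatible with equation (\ref{phi}) under the non-flatness assumption on $e^{-\varphi}h_\refe$, since non-flatness contributes a nontrivial curvature source that breaks this identity.

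For the lower estimate I would compare the given complete solution against the universal model solution on $\C$ associated with $\varphi\equiv-\infty$, whose entropy is by definition $S_{r,\beta}$. The strategy is to view $S(r,\varphi,\beta)-S_{r,\beta}$, or a convenient surrogate of it, as a function satisfying an elliptic differential inequality on $X$ to which a maximum principle applies. The analytic input is a pointwise upgrade of the Dai-Li and Li-Mochizuki estimates from the classical $\varphi=\tfrac{1}{r}\log|q|^2$ setting to an arbitrary subharmonic weight; completeness of $H_1,\dots,H_{r-1}$ together with Condition \ref{three} are then used to rule out a spurious boundary contribution at the ends of $X$. Feeding the resulting pointwise comparisons of the individual $H_j$'s into the Shannon expression yields $S(r,\varphi,\beta)(x)\geq S_{r,\beta}$. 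The equality analysis becomes a rigidity statement: sharpness at $x$ forces the configuration $(H_1,\dots,H_r)(x)$ to match the $\varphi\equiv-\infty$ model at $x$. In ranks $r=2,3$ this rigidity condition reduces to the single constraint $H_r(x)=0$, equivalent to $\varphi(x)=-\infty$, giving the claimed ``if and only if''; for $r\geq 4$ matching the model imposes additional constraints on $H_1,\dots,H_{r-1}$ that are incompatible with any solution coming from a non-constant weight, so equality cannot occur.

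The hardest step will be the extension of the Dai-Li and Li-Mochizuki estimates to general subharmonic $\varphi$, since the original arguments exploit the smoothness of $\varphi$ away from the discrete zero set of an $r$-differential, whereas a general $\varphi$ may be singular on a much larger set. My plan is to approximate $\varphi$ from above by a decreasing family of smooth subharmonic weights $\varphi_\epsilon$ obtained by standard mollification on coordinate charts, to run the classical estimates uniformly in $\epsilon$, and to pass to the limit using monotonicity of the associated solutions and elliptic regularity of the limiting equation. I expect the $W^{1,2}_{\rm loc}$ hypothesis on $e^{\varphi}h_\refe^{-1}$ in the ranks $r=2,3$ to be precisely the regularity that makes this limiting argument close in the borderline low-rank cases, where the order of degeneration of the equation is most severe.
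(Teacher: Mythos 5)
Your high-level architecture matches the paper's: extend the Dai--Li/Li--Mochizuki ratio estimates to general subharmonic weights by mollifying $\varphi$ and passing to the limit along the approximating complete solutions of Condition \ref{three}, then convert the pointwise two-sided bounds $\lambda_{j-1}/\lambda_j < H_{j-1}\otimes H_j^{-1} < 1$ into the entropy estimate. However, there is a genuine gap at the conversion step. You write that ``feeding the resulting pointwise comparisons of the individual $H_j$'s into the Shannon expression yields $S(r,\varphi,\beta)(x)\geq S_{r,\beta}$,'' but this is not automatic: the claim needed is that if two ordered probability vectors satisfy $Q_j/Q_{j+1}\leq P_j/P_{j+1}$ for all $j$, then $S(Q)\leq S(P)$. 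This monotonicity of Shannon entropy in the successive ratios is the paper's Lemma \ref{PQ}, and its proof requires reparametrizing the simplex by the ratio variables $s_j=t_j/t_{j+1}$ and verifying by an explicit computation that $\partial S/\partial s_k>0$ for every $k$. Your alternative suggestion --- treating $S(r,\varphi,\beta)-S_{r,\beta}$ as a subsolution of an elliptic inequality and applying a maximum principle to it directly --- is not what is done and is unlikely to work: the entropy is a complicated nonlinear function of the $H_j$'s and does not satisfy any evident differential inequality; the whole point of the paper's two-step structure is that the maximum principle is applied only to the individual log-ratios $\sigma_j$, $\sigma_j'$, and the entropy comparison is then purely combinatorial.

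The second gap concerns strictness and the role of the $W^{1,2}_{loc}$ hypothesis. Your argument for $S(r,\varphi,\beta)(x)<\log r$ when $\varphi(x)>-\infty$ --- that the uniform configuration at $x$ ``would impose an algebraic identity incompatible with equation (\ref{phi})'' --- is a pointwise claim that cannot be justified pointwise: equality of the $\vol(H_j)$ at a single point is perfectly consistent with the PDE there. The correct mechanism is global: one shows via the Cheng--Yau maximum principle and the strong maximum principle that for each $j$ either $H_{j-1}\otimes H_j^{-1}<1$ everywhere or $\equiv 1$ everywhere, the latter forcing $e^{-\varphi}h_\refe$ to be flat. Moreover, your mollification limit only yields the non-strict inequalities $H_{j-1}\otimes H_j^{-1}\leq 1$ and $H_r\otimes H_1^{-1}\leq 1$; recovering strictness after the limit is exactly where the mean value inequality (Theorem \ref{MVI}) enters, applied to $u=1-H_0\otimes H_1^{-1}$ to show that $\{u=0\}$ is open and closed, hence empty. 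The $W^{1,2}_{loc}$ assumption on $e^{\varphi}h_\refe^{-1}$ is what makes $u$ admissible for that inequality; it is not a regularity needed to ``close the limiting argument'' in low rank, and indeed the paper notes that for $r\geq 4$ the theorem holds without it because the strictness of the other ratios already suffices. Your outline of the equality case ($r=2,3$ forcing $H_r(x)=0$, i.e.\ $\varphi(x)=-\infty$, and $r\geq 4$ being excluded by the strict interior ratio bounds) is correct in spirit once these two pieces are in place.
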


\begin{theo}[Theorem \ref{main theorem b}]\label{main theorem 2} {\it The following holds:
\begin{align*}
\lim_{r\to\infty} (S_{r,\beta}-\log r)=
\begin{cases}
-\frac{2\beta d_\beta}{c_\beta}+\log(c_\beta) \ &\text{if $\beta>-1$} \\
-\infty \ &\text{if $\beta\leq -1$},
\end{cases}
\end{align*}
where $c_\beta$ and $d_\beta$ are defined as follows:
\begin{align*}
&c_\beta\coloneqq \int_0^1s^\beta(1-s)^\beta ds, \\
&d_\beta\coloneqq \int_0^1s^\beta(1-s)^\beta \log s\ ds.
\end{align*}
}
\end{theo}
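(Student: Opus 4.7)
The plan is to reduce the limit to a Riemann-sum / endpoint-concentration computation once the closed form of $S_{r,\beta}$ is available. When $\varphi\equiv-\infty$ the extended equation (\ref{phi}) degenerates to its symmetric Toda/cyclic form, whose complete solution forces $H_j=\mu_j\cdot H_0$ with Toda weights $\mu_j=j(r-j)$ for $j=1,\dots,r-1$ (and $H_r=0$). Consequently the entropy takes the purely numerical form
\begin{equation*}
S_{r,\beta}=-\sum_{j=1}^{r-1}p_j\log p_j,\qquad p_j=\frac{[j(r-j)]^\beta}{Z_r},\qquad Z_r=\sum_{k=1}^{r-1}[k(r-k)]^\beta.
\end{equation*}
Introducing uniform nodes $s_j:=j/r$ and factoring $j(r-j)=r^2 s_j(1-s_j)$, an algebraic rearrangement gives
\begin{equation*}
S_{r,\beta}-\log r=\log\!\Big(\tfrac{Z_r}{r^{2\beta+1}}\Big)-\beta\sum_{j=1}^{r-1}p_j\big[\log s_j+\log(1-s_j)\big],
\end{equation*}
so that the divergent $\log r$ contributions cancel by construction.

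For $\beta>-1$ the weight $s^\beta(1-s)^\beta$ and the function $s^\beta(1-s)^\beta\log s$ are Lebesgue-integrable on $[0,1]$, and Riemann-sum convergence (with the endpoint singularities handled by dominated convergence on $(0,1/2]$ and the symmetric argument on $[1/2,1)$) yields
\begin{equation*}
\frac{Z_r}{r^{2\beta+1}}=\frac{1}{r}\sum_{j=1}^{r-1}s_j^\beta(1-s_j)^\beta\longrightarrow c_\beta,\qquad \frac{1}{r}\sum_{j=1}^{r-1}s_j^\beta(1-s_j)^\beta\log s_j\longrightarrow d_\beta,
\end{equation*}
and by the symmetry $s\leftrightarrow 1-s$ the companion sum with $\log(1-s_j)$ also converges to $d_\beta$. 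Substituting back produces the limit $-2\beta d_\beta/c_\beta+\log c_\beta$, which is precisely the differential entropy of the Beta$(\beta+1,\beta+1)$ density $s^\beta(1-s)^\beta/c_\beta$.

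For $\beta\leq -1$ the integrals $c_\beta$ and $d_\beta$ diverge and the Riemann sums fail. The partition function is now dominated by the two boundary summands $j=1,\,r-1$: one obtains $Z_r\to 2\zeta(-\beta)r^\beta$ when $\beta<-1$ (so $p_1$ and $p_{r-1}$ have strictly positive liminf and $S_{r,\beta}$ stays bounded), and $Z_r\sim(2\log r)/r$ when $\beta=-1$ via partial fractions (a short Abel-summation argument then gives $S_{r,-1}\sim\tfrac{1}{2}\log r$). In either case $S_{r,\beta}-\log r\to -\infty$.

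The main technical obstacle is the first step: identifying the closed form $H_j\propto j(r-j)H_0$ under the degenerate weight $\varphi\equiv-\infty$. This should follow from the uniqueness encoded in Theorem \ref{main theorem 1} together with the enhanced symmetry of (\ref{phi}) when the $e^{-\varphi}h_\refe$ term drops out, making a principal Toda-type ansatz the unique complete solution. Once this closed form is in hand, all remaining steps are routine real analysis: Riemann sums with integrable boundary singularities for $\beta>-1$, and partial-fraction / dominant-term estimates for $\beta\leq -1$.
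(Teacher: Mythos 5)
Your proof is correct, and for $\beta>-1$ it is essentially the paper's argument: rewrite $S_{r,\beta}-\log r$ so the $\log r$ terms cancel, then identify $Z_r/r^{2\beta+1}$ and the weighted $\log$-sums as Riemann sums converging to $c_\beta$ and $d_\beta$ (with the symmetry $s\leftrightarrow 1-s$ doubling the $d_\beta$ contribution), the endpoint singularities for $-1<\beta<0$ being controlled by monotone comparison with the integral exactly as in the paper's inequalities (\ref{1-s})--(\ref{kr}). Two remarks. First, the ``main technical obstacle'' you flag is not one: the paper \emph{defines} $S_{r,\beta}$ in Section \ref{5} directly as the Shannon entropy of $\hat{p}_{j,\beta}=\lambda_j^\beta/\sum_k\lambda_k^\beta$ with $\lambda_j=j(r-j)$ given by (\ref{lambda}), so Theorem \ref{main theorem b} is a purely numerical statement requiring no input from the PDE or from uniqueness of complete solutions; the closed form you worry about is already built into the statement. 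Second, for $\beta\leq-1$ your route differs in packaging from the paper's: the paper derives asymptotics for $Z_{r,\beta}/r^{2\beta+1}$ and for the auxiliary sum $h_{r,\beta}$ of (\ref{er}) and substitutes into the identity (\ref{Srb}), whereas you argue via endpoint concentration of the probability vector ($Z_r\sim 2\zeta(-\beta)r^\beta$ for $\beta<-1$, $Z_r\sim 2r^{-1}\log r$ for $\beta=-1$). Your version is slightly more conceptual and yields sharper information ($S_{r,\beta}=O(1)$ for $\beta<-1$ and $S_{r,-1}\sim\tfrac12\log r$), but note that the step ``$p_1,p_{r-1}$ have positive liminf, hence $S_{r,\beta}$ stays bounded'' is not a valid implication as literally stated (two heavy atoms only give $S_{r,\beta}\leq(1-c)\log r+O(1)$ by the grouping property of entropy); that weaker bound already suffices for $S_{r,\beta}-\log r\to-\infty$, while genuine boundedness requires estimating the full tail sums $\sum_j p_j\log j$ and $\sum_j p_j\log(r-j)$, which your $\zeta$-function asymptotics do support.
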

Theorem 1 asserts that entropy can be uniformly evaluated from above and below using the entropy in two extreme cases: when the metric $e^{-\varphi}h_\refe$ is flat and when the weight $\varphi$ is identically minus infinity. The concept of entropy was strongly influenced by and motivated by the mutual domination theorem of harmonic metrics on cyclic Higgs bundles established by Dai-Li \cite{DL1, DL2} and Li-Mochizuki \cite{LM1} which asserts that the difference between $H_1,\dots, H_r$ is bounded from above and below by the difference of metrics of the two extreme cases (see Section \ref{2.1}). Most of the proof of Theorem \ref{main theorem 1} is devoted to extending the estimate established by Dai-Li and Li-Mochizuki to more general subharmonic weight functions.

Condition \ref{three} in Theorem \ref{main theorem 1} concerns the approximation of the complete solution. The author expects that for any non-trivial semipositive singular Hermitian metric $e^{-\varphi}h_\refe$ on $K_X\rightarrow X$, there always exists a unique complete solution to the extended Hitchin equation (\ref{phi}) in Section \ref{2.2} associated with $\varphi$, and that approximation of this complete solution by smooth complete solutions in the sense of Condition \ref{three} is always possible. However, the proof has not yet been completed (see Section \ref{2.2} and \cite{Miy5}).

The assumption in Theorem \ref{main theorem 1} that $e^\varphi h_\refe^{-1}$ belongs to $W^{1,2}_{loc}$ is made in order to apply the mean value inequality (see Section \ref{CYMMVI}) in the proof of Proposition \ref{r/2}. This assumption also seems unnecessary, but the proof is not yet complete without it. It is used in Proposition \ref{r/2} to assert that equality is never achieved in the inequality $H_0 \otimes H_1^{-1} \leq 1$. Therefore, if $r\geq 4$, estimate (\ref{SrbSrp}) holds even without this assumption.

In the definition of entropy mentioned above, $\beta$ is a non-zero real value based on the motif of inverse temperature in the canonical ensemble of statistical physics. In the paper \cite{Miy4}, which is a sequel to this paper, we will pursue the analogy with the canonical ensemble in more detail. 

The structure of the paper is organized as follows: In Section \ref{2}, we will explain some background. In Section \ref{CYMMVI}, we will give a brief review for the Cheng-Yau maximum principle and the mean value inequality, which we will use in the proof of Theorem \ref{main theorem 1}.
In Section \ref{4}, we will give the definition of entropy. In Section \ref{5}, we will give the statement of our main theorems. In Section \ref{6}, we will provide a proof of our main theorems. 

\medskip
\noindent
{\bf Acknowledgements.} 

\noindent
I am grateful to Ryoichi Kobayashi for questioning the direction of moving the parameter $r$, which controls the size of the target symmetric space of harmonic maps. 
I would like to express my deep gratitude to Qiongling Li for answering my many questions, for reading the first draft of this paper, and for pointing out an improvement in the upper bound of the entropy when the $r$-differential $q$ is bounded, with respect to the complete hyperbolic K\"ahler metric, by a constant depending only on the norm of $q$ measured by the hyperbolic metric. 
I would also like to extend my sincere thanks to Toshiaki Yachimura for his advice on the readability of this paper, for our many discussions, and for his invaluable help, especially in the definition of entropy. This work was supported by the Grant-in-Aid for Early Career Scientists (Grant Number: 24K16912).

\section{Backgrounds}\label{2}
\subsection{Real, diagonal, and complete Hermitian metrics}\label{rdc}
Let $X$ be a connected, possibly non-compact Riemann surface and $K_X\rightarrow X$ the canonical bundle. We choose a square root $K_X^{1/2}$ of the canonical bundle $K_X\rightarrow X$. Let $\K_r$ be a holomorphic vector bundle of rank $r$ defined as follows:
\begin{align*}
\K_r\coloneqq \bigoplus_{j=1}^r K_X^{\frac{r-(2j-1)}{2}} = K_X^{\frac{r-1}{2}} \oplus K_X^{\frac{r-3}{2}} \oplus \cdots \oplus K_X^{-\frac{r-3}{2}} \oplus K_X^{-\frac{r-1}{2}}.
\end{align*}
A Hermitian metric $h$ on $\K_r\rightarrow X$ is said to be {\it real} (cf. \cite{Hit2}) if the following bundle isomorphism $S$ is isometric with respect to $h$:
\begin{align*}
S\coloneqq \left(
\begin{array}{ccc}
& & 1 \\ 
& \reflectbox{$\ddots$} & \\ 
1 & &
\end{array}
\right): \K_r\rightarrow \K_r^{\vee},
\end{align*}
where $\K_r^{\vee}$ denotes the dual bundle of $\K_r$. Also, $h$ is called {\it diagonal} (cf. \cite{Bar1, Col1, DL2}) if it splits as $h=(h_1,\dots, h_r)$. Let $T_X\rightarrow X$ be the dual bundle of $K_X\rightarrow X$. By taking the difference between the adjacent components in a diagonal Hermitian metric $h=(h_1,\dots, h_r)$, we obtain $r-1$-Hermitian metrics $H_1,\dots, H_{r-1}$ on $T_X\rightarrow X$:
\begin{align*}
H_j\coloneqq h_j^{-1}\otimes h_{j+1}.
\end{align*}
A diagonal Hermitian metric $h$ is real if and only if $h_j=h_{r-j+1}$ for all $j=1,\dots, r$. This is also equivalent to the metrics $H_1,\dots, H_{r-1}$ satisfying $H_j=H_{r-j}$ for all $j=1,\dots, r-1$. We say that a diagonal Hermitian metric $h$ is {\it complete} (cf. \cite{LM1}) if the K\"ahler metrics induced by $H_1,\dots, H_{r-1}$ are all complete. 
\subsection{Harmonic metrics on cyclic Higgs bundles}\label{2.1}
Let $q$ be a holomorphic section of $K_X^r\rightarrow X$. Then we associate a holomorphic section $\Phi(q)\in H^0(\End \K_r\otimes K_X)$, which is called a Higgs field, as follows:
\begin{align*}
\Phi(q)\coloneqq 
\left(
\begin{array}{cccc}
0 & && q \\ 
1 & \ddots && \\ 
& \ddots & \ddots & \\ 
& & 1 & 0
\end{array}
\right).
\end{align*}
The pair $(\K_r,\Phi(q))$ is called a cyclic Higgs bundle (cf. \cite{Bar1, Hit1, Hit2}). The above cyclic Higgs bundle is an example of cyclotomic Higgs bundles introduced in \cite{Sim3}. For each Hermitian metric $h$ on $\K_r$, we associate a connection $D_q(h)$ defined as follows:
\begin{align*}
D_q(h)\coloneqq \nabla^h + \Phi(q) + \Phi(q)^{\ast h}.
\end{align*}
A Hermitian metric on $(\K_r,\Phi(q))\rightarrow X$ is called a {\it harmonic metric} if the connection $D_q(h)$ is a flat connection. A harmonic metric $h$ yields a harmonic map $\hat{h}: \widetilde{X}\rightarrow \SL(r,\C)/\SU(r)$, where $\widetilde{X}$ is the universal covering space. f $h$ is real, then $\hat{h}$ maps into $\SL(r,\R)/\SO(r)$. Suppose that $X$ is a compact Riemann surface of genus at least $2$. Then the Higgs bundle $(\K_r,\Phi(q))$ is stable for any $q\in H^0(K_X^r)$, and thus there uniquely exists a harmonic metric $h$ on $(\K_r,\Phi(q))$ such that $\det(h)=1$. From the uniqueness of the metric and the symmetry of the Higgs field (cf. \cite{Bar1, Hit2}), the harmonic metric $h$ is diagonal and real. A non-diagonal harmonic metric (resp. a non-real harmonic metric) can be constructed by considering the Dirichlet problem (cf. \cite{Don1, LM2}) for the Hitchin equation with a non-diagonal (resp. a non-real) Hermitian metric on the boundary. 
Let $h=(h_1,\dots, h_r)$ be a diagonal harmonic metric on a cyclic Higgs bundle $(\K_r,\Phi(q))\rightarrow X$. As mentioned in the previous subsection we obtain $r-1$-Hermitian metrics $H_1,\dots, H_{r-1}$ on $T_X\rightarrow X$ defined as
\begin{align*}
H_j\coloneqq h_j^{-1}\otimes h_{j+1} \ \text{for each $j=1,\dots,r -1$}.
\end{align*}
We also obtain a Hermitian metric $H_r$ on $T_X\rightarrow X$ which is degenerate at the zeros of $q$ as follows:
\begin{align*}
H_r\coloneqq h_r^{-1}\otimes h_1 \otimes h_q,
\end{align*}
where $h_q$ is a Hermitian metric on $K_X^{-r}\rightarrow X$ that is degenerate at the zeros of $q$ defined as follows:
\begin{align}
(h_q)_x(u,v)\coloneqq \langle q_x,u\rangle\overline{\langle q_x,v\rangle} \ \text{for $x\in X$, $u,v\in (K_X^{-r})_x$}. \label{q}
\end{align}
The connection $D_q(h)$ is flat if and only if the Hermitian metric $h$ satisfies the following elliptic equation, called the Hitchin equation:
\begin{align*}
F_h + [\Phi(q)\wedge\Phi(q)^{\ast h}] = 0.
\end{align*}
By using $H_1,\dots, H_r$, we can describe the Hitchin equation as follows: 
\begin{align}
\inum F_{h_j} + \vol(H_{j-1}) - \vol(H_j) = 0 \ \text{for $j=1,\dots, r-1$}, \label{F}
\end{align}
where for each $j=1,\dots, r$, $\vol(H_j)$ is the volume form of the metric $H_j$ which is locally described as $\vol(H_j)=\inum H_j(\frac{\partial}{\partial z}, \frac{\partial}{\partial z})\ dz\wedge d\bar{z}$, and $H_0$ is $H_r$. Equation (\ref{F}) is a kind of Toda equation (cf. \cite{AF1, Bar1, GH1, GL1, Moc0, Moc1}). By making a variable change from $h=(h_1,\dots, h_r)$ to $H_1,\dots, H_{r-1}$, the Hitchin equation becomes the following:
\begin{align}
\inum F_{H_j} + 2\vol(H_j) - \vol(H_{j-1}) - \vol(H_{j+1}) = 0 \ \text{for $j=1,\dots, r-1$.} \label{cyc}
\end{align}
In the case where $r=2$ and $q=0$, the Hitchin equation for the cyclic Higgs bundle is equivalent to the following equation:
\begin{align}
\inum F_H + 2\vol(H) = 0, \label{H}
\end{align}
where the solution $H$ to equation (\ref{H}) is a Hermitian metric $H$ on $T_X\rightarrow X$. The K\"ahler metric induced by $H$ is a constant negative Gaussian curvature K\"ahler metric (cf. \cite{Hit1}). 
We remark on the following two points:
\begin{itemize}
\item If $q$ has no zeros, then $H_1=\cdots=H_{r-1}=h_q^{1/r}$ is a solution to PDE (\ref{cyc}).
\item Suppose that $q=0$ and that there is a solution $H$ to equation (\ref{H}). We define positive constants $\lambda_1,\dots, \lambda_{r-1}$ as follows: 
\begin{align}
\lambda_j\coloneqq 2\sum_{k=1}^{r-1}(\Lambda_{r-1}^{-1})_{jk} = j(r-j), \label{lambda}
\end{align}
where $\Lambda_{r-1}^{-1}$ is the inverse matrix of the Cartan matrix of type $A_{r-1}$. Then $H_j=\lambda_j H\ (j=1,\dots, r-1)$ is a solution to PDE (\ref{cyc}).
\end{itemize}

Li-Mochizuki \cite{LM1} established the following result:
\begin{theo}[\cite{LM1}]{\it Suppose that $q$ is a non-zero holomorphic section unless $X$ is hyperbolic. Then there always exists a unique complete harmonic metric $h$ on $(\K_r,\Phi(q))\rightarrow X$ satisfying $\det(h)=1$. Moreover, the complete harmonic metric $h$ is real.
}
\end{theo}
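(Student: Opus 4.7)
The plan is to prove the three claims — existence, uniqueness, and reality — by first reducing the Hitchin equation to the decoupled Toda-type system \eqref{cyc} for the metrics $H_1,\dots, H_{r-1}$ on $T_X$, then constructing a complete solution by an exhaustion argument, and finally extracting uniqueness and reality from a maximum-principle and symmetry argument. Since we demand $\det(h)=1$, it is natural to search among diagonal metrics $h=(h_1,\dots,h_r)$ with $\prod_j h_j=1$, for which \eqref{F} becomes the system \eqref{cyc} on $H_1,\dots,H_{r-1}$, coupled through $H_r=h_r^{-1}\otimes h_1\otimes h_q$. The unknowns are conformal factors, so each equation is a semilinear elliptic PDE of the form $\Delta u_j = F_j(u_1,\dots,u_{r-1};\varphi)$ with $\varphi=\frac{1}{r}\log|q|^2$, and we can work throughout in conformal coordinates.

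For existence, I would fix a reference smooth Hermitian metric $h_\refe$ on $K_X$ and an exhaustion $\Omega_1\Subset\Omega_2\Subset\cdots$ of $X$ by relatively compact domains with smooth boundary, each hyperbolic. On each $\Omega_n$, the complete hyperbolic K\"ahler metric gives a model sub-solution for the $r=2$, $q=0$ Hitchin equation \eqref{H}, and from it one constructs explicit complete sub- and super-solutions for the system \eqref{cyc}: use $H_j=\lambda_j H_{\Omega_n}^{hyp}$ with $\lambda_j=j(r-j)$ as a sub-solution, and a rescaling of the same family as a super-solution, chosen so that they dominate the contribution from $h_q$ near zeros of $q$. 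An iteration between sub- and super-solutions (or equivalently the method of continuity along the coupling constant in front of $H_r$) then produces a smooth complete solution $h^{(n)}$ on $\Omega_n$ with $\det(h^{(n)})=1$. A priori interior estimates from the Toda-type PDE (upper bounds by comparison with $h_q^{j(r-j)/r}$ where $q\neq 0$, lower bounds by comparison with a hyperbolic model) yield local $C^k$-bounds uniform in $n$ on each compact subset of $X$. Extracting a subsequential limit gives a smooth diagonal solution $h=(h_1,\dots,h_r)$ on all of $X$ with $\det(h)=1$. The main obstacle is showing that $h$ is itself complete, since a priori the completeness could degenerate in the limit; here one uses the monotonicity of sub-solutions (the sequence $h^{(n)}$ increases in an appropriate sense after normalization) together with the fact that our sub-solutions are themselves complete at every finite stage, so their upper envelope, which agrees with $h$, inherits completeness.

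For uniqueness among complete diagonal solutions, let $h,\tilde h$ be two such solutions and set $u_j:=\log(H_j/\tilde H_j)$. Subtracting the two copies of \eqref{cyc} and using $\vol(H_j)=e^{u_j}\vol(\tilde H_j)$, one obtains a semilinear inequality of the form $\Delta u_j\geq \sum_k M_{jk}(e^{u_k}-1)$ with $M=2\mathrm{Id}-A_{r-1}$ where $A_{r-1}$ is the Cartan matrix of type $A_{r-1}$ — a cooperative system with a negative-definite linearization at $u\equiv 0$. Summing with the coefficients $\sum_k(\Lambda_{r-1}^{-1})_{jk}$ from \eqref{lambda}, standard linear combinations convert this into a single scalar inequality of the form $\Delta\Psi\geq c(\Psi_+)$ for a convex function of $u$, to which the Cheng-Yau maximum principle applies because each $H_j$ is complete: one obtains $\Psi\leq 0$, and by symmetry $\Psi\geq 0$, hence $h=\tilde h$.

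Reality then follows for free from uniqueness and the symmetry of the Higgs field. The bundle isomorphism $S:\K_r\to\K_r^\vee$ satisfies $S\Phi(q)=\Phi(q)^t S$, so if $h$ is a complete harmonic metric with $\det(h)=1$, then $S^\ast h$ is another complete diagonal harmonic metric with determinant one (the pull-back preserves both conditions). By the uniqueness proved in the previous step, $S^\ast h=h$, which is exactly the statement that $h$ is real, i.e.\ $h_j=h_{r-j+1}$. The principal analytic difficulty throughout is the simultaneous control of the completeness and the a priori estimates when $q$ has high-order zeros, where the source term $\vol(H_r)$ degenerates and the sub-solution built from the hyperbolic metric must be carefully matched with the local behavior of $h_q$; this matching, and the Cheng-Yau step in the uniqueness argument, are the two places where the hypothesis ``$q$ nonzero unless $X$ is hyperbolic'' is genuinely used.
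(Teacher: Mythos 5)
First, a point of reference: the paper does not prove this statement at all --- it is quoted from Li--Mochizuki \cite{LM1} as background --- so there is no internal proof to compare yours against, and what follows measures your outline against the strategy of \cite{LM1} itself. Your architecture (exhaustion plus comparison solutions for existence, Cheng--Yau for uniqueness, reality via the symmetry $S\Phi(q)=\Phi(q)^tS$ and uniqueness) is the right one and is the one used in \cite{LM1}. The reality step in particular is correct as you state it: $S$ sends a complete diagonal solution with determinant one to another such solution with the $H_j$ merely permuted ($H_j\mapsto H_{r-j}$), so uniqueness forces $h_j=h_{r-j+1}$.

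There are, however, two genuine gaps. First, you prove uniqueness only within the class of \emph{diagonal} metrics, whereas the theorem asserts that the complete harmonic metric with $\det(h)=1$ is unique outright; diagonality is a conclusion in \cite{LM1}, not a hypothesis. It is obtained by running the uniqueness argument for general Hermitian metrics (via the endomorphism comparing $h$ and $\tilde h$ and the Cheng--Yau principle applied to $\log\Tr$ of it, not via the scalars $u_j=\log(H_j/\tilde H_j)$, which do not see a non-diagonal competitor) and then invoking the $\Z_r$-symmetry: conjugation by $\mathrm{diag}(1,\omega,\dots,\omega^{r-1})$, $\omega^r=1$, rescales $\Phi(q)$ by a root of unity without changing the set of harmonic metrics, so the unique solution is invariant under this gauge transformation and hence diagonal. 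Second, the completeness of the limit metric is asserted rather than proved. Your sub-solution on $\Omega_n$ is built from the hyperbolic metric of $\Omega_n$; since these decrease to $H^{\mathrm{hyp}}_X$ as the exhaustion grows, the only bound surviving the limit is $H_j\geq\lambda_jH^{\mathrm{hyp}}_X$, which is vacuous when $X$ is parabolic. That is exactly the case where the hypothesis ``$q$ non-zero'' must be converted into a complete lower comparison metric built from $h_q^{1/r}$, and this conversion --- together with the fact that applying Cheng--Yau to the limit requires Ricci curvature bounded below for one of the $H_j$, which by equation (\ref{cyc}) is itself equivalent to the mutual boundedness of the $\vol(H_j)$ that you are in the process of establishing --- is where the actual analytic work of \cite{LM1} lies. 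You correctly flag both difficulties in your closing sentence, but the proposal does not resolve them.
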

In addition to \cite{LM1}, we refer the reader to \cite{DL3, DW1, GL1, LT1, LTW1, Li1, LM2, Moc0, Moc1, Nie1, Wan1, WA1} for the works related to complete harmonic metrics. Also, Dai-Li \cite{DL1, DL2} and Li-Mochizuki \cite{LM1} established the following:
\begin{theo}[\cite{DL1,DL2, LM1}]\label{estimate} 
{\it Let $q$ be an $r$-differential that is not identically zero and has at least one zero. Let $h$ be the diagonal complete harmonic metric on the cyclic Higgs bundle $(\K_r,\Phi(q))$ satisfying $\det(h)=1$. Then the following estimate holds:
\begin{align}
&\frac{\lambda_{j-1}}{\lambda_j}=\frac{(j-1)(r-j+1)}{j(r-j)} < H_{j-1}\otimes H_j^{-1} < 1 \ \text{for all $2\leq j\leq [r/2]$},\label{r/2}\\
& H_r\otimes H_1^{-1} < 1.
\end{align}
}
\end{theo}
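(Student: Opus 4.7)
The plan is to use a maximum-principle argument on the system of Hitchin equations (\ref{cyc}). In a local trivialization writing $H_j = e^{g_j}$, equation (\ref{cyc}) becomes
\begin{align*}
\partial_z\partial_{\bar z} g_j = 2e^{g_j} - e^{g_{j-1}} - e^{g_{j+1}}, \qquad j = 1,\dots, r-1,
\end{align*}
with the cyclic convention $g_0 = g_r$, so that $e^{g_r}$ vanishes exactly at the zeros of $q$. Using the Li--Mochizuki real structure $H_j = H_{r-j}$, it suffices to establish both inequalities for $2 \le j \le [r/2]$, together with the boundary estimate $H_r \otimes H_1^{-1} < 1$. I would introduce $u_j := g_j - g_{j-1} = \log(H_j/H_{j-1})$, so that the statement reduces to $0 < u_j < \log(\lambda_j/\lambda_{j-1})$ on $X$ for $2\le j\le [r/2]$.

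Subtracting the $g_j$- and $g_{j-1}$-equations yields the coupled relation
\begin{align*}
\partial_z\partial_{\bar z} u_j = 2A_j - A_{j-1} - A_{j+1}, \qquad A_k := e^{g_k} - e^{g_{k-1}} = e^{g_{k-1}}(e^{u_k}-1).
\end{align*}
For the upper bound $u_j > 0$, I would argue by contradiction: if $\inf_X u_j \le 0$, apply the Cheng--Yau maximum principle on the complete manifold $(X, H_1)$ to produce an almost-minimizing sequence along which $\partial_z\partial_{\bar z} u_j \gtrsim 0$. Combine this with the sign propagation of the $A_k$'s and the degeneracy $e^{g_r}=0$ at zeros of $q$ to reach a contradiction. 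The strict inequality is then upgraded from $u_j \ge 0$ to $u_j > 0$ via the strong maximum principle, using that $q \not\equiv 0$ rules out the no-zero extremal configuration $H_1 = \cdots = H_{r-1}$.

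For the lower bound $u_j < \log(\lambda_j/\lambda_{j-1})$, I would pass to $\tilde g_j := g_j - \log\lambda_j$ and $\tilde u_j := \tilde g_j - \tilde g_{j-1} = u_j - \log(\lambda_j/\lambda_{j-1})$, so the goal becomes $\tilde u_j < 0$. Since $\lambda_j = j(r-j)$ satisfies the algebraic identity $2\lambda_j - \lambda_{j-1} - \lambda_{j+1} = 2$, substitution produces an analogous elliptic PDE for $\tilde u_j$ whose extremal analysis compares against the $q=0$ extremal solution $H_j = \lambda_j H_{\mathrm{hyp}}$. A symmetric Cheng--Yau argument, now driven by the existence of at least one zero of $q$ (which distinguishes the solution from the flat case $H_j = h_q^{1/r}$), delivers the strict inequality $\tilde u_j < 0$. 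The cyclic boundary estimate $H_r \otimes H_1^{-1} < 1$ is handled by the same reasoning applied at the wrap-around index $j=1$, where $g_0 = g_r$ is degenerate at the zeros of $q$.

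The principal obstacle is that no individual $u_j$ satisfies a scalar elliptic PDE with a one-signed right-hand side, since the couplings through $A_{j\pm 1}$ mix all the ratios. The resolution is to pass to a weighted linear combination $\Phi = \sum_j c_j u_j$ with positive coefficients $c_j$ (naturally related to the inverse Cartan matrix of type $A_{r-1}$, which already governs the weights $\lambda_j = j(r-j)$ via (\ref{lambda})) chosen so that $\partial_z\partial_{\bar z}\Phi$ acquires a definite sign at the extremum, reducing to a scalar maximum-principle problem. Making this choice of $c_j$ compatible with both the real structure and the cyclic boundary condition $g_0 = g_r$ is the technical heart of the proof.
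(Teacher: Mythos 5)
Your general setup (log-ratios of consecutive $H_j$'s, the real structure $H_j=H_{r-j}$ reducing everything to $j\le[r/2]$, the cyclic convention $H_0=H_r$ degenerating at zeros of $q$, and comparison against the two extremal solutions via a Cheng--Yau-type maximum principle on the complete metrics) matches the strategy of the proof this paper gives for the generalized statement, Proposition \ref{r/2}, following Li--Mochizuki. But the step you yourself flag as ``the technical heart'' --- decoupling the Toda system --- is exactly where your proposal has a genuine gap, and the device you propose for it is not the one that works. A single weighted combination $\Phi=\sum_j c_j u_j$ cannot be fed into the Cheng--Yau maximum principle: that principle needs a differential inequality of the form $\Delta\Phi\ge f(\Phi)$ for a one-variable $f$, whereas $\Delta\Phi$ here is a linear combination of exponentials of the \emph{individual} $u_k$'s, which is not a function of $\Phi$ alone; nor does an extremum of $\Phi$ control the individual $u_k$'s pointwise. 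The inverse Cartan matrix does diagonalize the linear part of the Toda system, but not the exponential nonlinearities, so ``a definite sign at the extremum'' does not materialize.

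The actual decoupling is an induction on the index $j$ at the level of suprema. Setting $\sigma_j=\log(H_{j-1}\otimes H_j^{-1})$ and $M_j=\sup_X e^{\sigma_j}$, one writes $\inum\Lambda_{H_j}\partial\bar\partial\sigma_j=3e^{\sigma_j}-3-e^{\sigma_j+\sigma_{j-1}}+e^{-\sigma_{j+1}}\ge(3-M_{j-1})e^{\sigma_j}-3+M_{j+1}^{-1}$, i.e.\ one freezes the neighbours at their suprema; the inductively established bound $M_{j-1}<3$ makes the coefficient positive, Cheng--Yau (with respect to the complete metric $H_j$, not a fixed background Laplacian as in your local $\partial_z\partial_{\bar z}$ formulation) then yields $0\ge(3-M_{j-1})M_j-3+M_{j+1}^{-1}$, and chaining these recursions (the quantities $M_j\le\frac{4j}{2j+1}-\frac{2j-1}{2j+1}M_{j+1}^{-1}$ and $M_jB_j\le B_{j+1}$ in Lemma \ref{MMBB}) telescopes to $M_j\le 1$. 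The lower bound is obtained the same way for $\sigma_j'=\log(H_j^{-1}\otimes H_{j+1})$, comparing the resulting recursive \emph{inequalities} for $\sup e^{\sigma_j'}$ with the identical recursions satisfied with \emph{equality} by the constants $\lambda_{j+1}/\lambda_j$ --- no shift $g_j\mapsto g_j-\log\lambda_j$ is needed. Only after these non-strict bounds are in hand does the strong maximum principle (together with $q\not\equiv0$ and $q$ having a zero) upgrade them to strict inequalities, propagating $e^{\sigma_j}\equiv1$ along the chain to contradict non-flatness. Without this inductive supremum scheme, or a worked-out substitute for it, your argument does not close.
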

The above theorem states that the difference between the adjacent metrics $H_{j-1}\otimes H_j^{-1}$ can be evaluated using the metrics for the above two extremal cases. For theorems on harmonic maps which are derived from the above estimate, see \cite{DL1, DL2, DL3, LM1}.

\begin{rem} When $r=2$ and $X$ is a compact hyperbolic Riemann surface, the inequality $H_2\otimes H_1^{-1}<1$ was shown by Hitchin in the proof of \cite[Theorem 11.2]{Hit1}.
As a consequence, it holds that $q+\omega_1+\omega_2+\bar{q}$ is positive-definite, where $q$ is considered to be a $(2,0)$-form, and $\omega_j \ (j=1,2)$ is a $(1,1)$-form induced by $H_j$ (see also \cite[Theorem 3.1]{Wol1}). 
\end{rem}
\begin{rem} Since \cite{Wol1}, how the accompanying harmonic metric depends on $t$ when the holomorphic $r$- differential $q$ is scale-transformed to $tq$ by the parameter $t$ has been well studied (cf. \cite{CL1, DL2, DW1, Lof1, LoTW1, Sak1, Wol1}), but there seems to be much less research investigating how the location of zeros affects the global properties of a harmonic metric. Note, however, that the local behavior of the harmonic metric on the cyclic Higgs bundle around the zeros of $q$, especially for $r=2,3$, has been well studied. See \cite[Section 1.8]{DW1} and the references cited therein. We also remark on \cite{Li1, LM1, LM2, Nie1} for studies on the relationship between the completeness of the metric induced by $q$ outside the zero sets and the uniqueness of the harmonic metric. There has also been a great deal of research into the zeros of holomorphic differentials, forgetting about harmonic metrics, especially in the case of $r=2,3$. See, for example, \cite{BS1, DW1, Kus1, Mas1} and the references cited therein.
\end{rem}
\subsection{More general subharmonic weight functions}\label{2.2}
For each $q\in H^0(K_X^r)$, the metric $h_q$ induces a singular metric (cf. \cite{GZ1}) $h_q^{-1/r}$ that diverges at the zeros of $q$. We can define a curvature for $h_q^{-1/r}$ in the sense of currents, which is semipositive and has support at the zero points of $q$. We fix a smooth metric $h_\refe$ and denote by $e^{-\phi_q}h_\refe$ the metric $h_q^{-1/r}$, where the weight function $\phi_q$ is defined as $\phi_q\coloneqq \frac{1}{r}\log|q|_{h_\refe}^2$. If we choose a local flat reference metric, then the weight function defines a local subharmonic function. Also, although we will not go into details here, there is a way to identify the weight function with a global psh function on the total space of the dual line bundle excluding zero points, see \cite[Section 2.1]{BB1}. In the sense described above, we call the function $\phi_q$ a {\it subharmonic weight function}, although this is a bit of an abuse of terminology. We consider a more general singular metric $e^{-\varphi}h_\refe$ with semipositive curvature, where the weight function $\varphi$ is a function that is locally a sum of a subharmonic function and a smooth function. For each $q_N\in H^0(K_X^N)$, let $\phi_{q_N}$ be defined as $\frac{1}{N}\log|q_N|_{h_\refe}^2$. Then $e^{-\phi_{q_N}}h_\refe$ is a singular metric with semipositive curvature. Moreover, any semipositive singular metric can be approximated, at least in the $L^1_{loc}$ sense, by a sequence $(e^{-\phi_{q_N}}h_\refe)_{N\in\N}$, where $q_N\in H^0(K_X^N)$ (cf. \cite{GZ1}). In \cite{Miy2, Miy3}, for each semipositive singular metric $e^{-\varphi}h_\refe$ and each $r\geq 2$, the following equation for a diagonal metric $h=(h_1,\dots, h_r)$ on $\K_r\rightarrow X$ was introduced:
\begin{align}
\inum F_{h_j} + \vol(H_{j-1}) - \vol(H_j) = 0 \ \text{for $j=1,\dots, r-1$}, \label{phi}
\end{align}
where $H_1,\dots, H_{r-1}$ are defined as $H_j\coloneqq h_j^{-1}\otimes h_{j+1}$ for each $j=1,\dots, r-1$, and $H_r=H_0$ is defined as follows:
\begin{align*}
H_r = H_0 &\coloneqq h_r^{-1}\otimes h_1 \otimes (e^{-\varphi}h_\refe)^{-r} \\ 
&= H_1^{-1}\otimes \cdots \otimes H_{r-1}^{-1} \otimes (e^{-\varphi}h_\refe)^{-r}.
\end{align*}
For the case where $\varphi=\phi_q=\frac{1}{r}\log|q|_{h_\refe}^2$, equation (\ref{phi}) becomes the Hitchin equation for the cyclic Higgs bundle $(\K_r,\Phi(q))$. For the case where $\varphi=\phi_{q_N}=\frac{1}{N}\log|q_N|_{h_\refe}^2$, equation (\ref{phi}) gives a harmonic metric on a ramified covering space of $X$ (see \cite[Section 2]{Miy2}). As mentioned above, more general $\varphi$ can be approximated by a sequence $(\phi_{q_N})_{N\in\N}$ at least in the $L^1_{loc}$-sense (cf. \cite{GZ1}), therefore equation (\ref{phi}) can be considered to be an equation obtained as the limit when the covering degree increases infinitely or the number of zeros increases infinitely. By making a variable change from $h=(h_1,\dots, h_r)$ to $H_1,\dots, H_{r-1}$, the equation becomes the following:
\begin{align}
\inum F_{H_j} + 2\vol(H_j) - \vol(H_{j-1}) - \vol(H_{j+1}) = 0 \ \text{for $j=1,\dots, r-1$.}\label{cyc2}
\end{align}
We remark on the following two points:
\begin{itemize}
\item If $e^{-\varphi}h_\refe$ is flat, then $H_1=\cdots=H_{r-1}=(e^{-\varphi}h_\refe)^{-1}$ is a solution to PDE (\ref{cyc2}).
\item Suppose that $\varphi=-\infty$. Then equation (\ref{cyc2}) is the same as equation (\ref{cyc}) for the case where $q=0$, and thus for a solution $H$ to equation (\ref{H}), $H_j=\lambda_j H\ (j=1,\dots, r-1)$ is a solution to PDE (\ref{cyc2}), where $\lambda_1,\dots, \lambda_{r-1}$ are constants defined in (\ref{lambda}).
\end{itemize}
Concerning the complete solutions to equation (\ref{phi}), the following holds:
\begin{theo}[\cite{Miy5}]\label{c1} {\it Suppose that $\varphi$ is not identically $-\infty$ unless $X$ is hyperbolic. Suppose also that $\varphi$ satisfies the following assumption:
\begin{enumerate}[$(\ast)$]
\item There exists a compact subset $K\subseteq X$ such that on $X\backslash K$, $e^{\varphi}$ is of class $C^2$.
\end{enumerate}
Then for any two complete solutions $h=(h_1,\dots, h_r)$ and $h^\prime=(h_1^\prime,\dots, h^\prime)$ associated with $\varphi$ satisfying $\det(h)=\det(h^\prime)=1$, we have $h=h^\prime$. 
}
\end{theo}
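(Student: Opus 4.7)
The strategy is to reduce uniqueness to a maximum principle applied to a plurisubharmonic trace function built from the two solutions. Let $h=(h_1,\dots,h_r)$ and $h'=(h_1',\dots,h_r')$ be two complete solutions of (\ref{phi}) associated with $\varphi$ and normalized by $\det(h)=\det(h')=1$. Set $v_j\coloneqq \log(h_j'/h_j)$, so that the determinant normalization gives $\sum_{j=1}^r v_j\equiv 0$. The induced ratios $\sigma_j\coloneqq \log(H_j'/H_j)$ satisfy $\sigma_j=v_{j+1}-v_j$ for $j=1,\dots,r-1$, and because both solutions share the same weight $\varphi$ the factor $(e^{-\varphi}h_\refe)^{-r}$ cancels in $H_r'/H_r$; hence $\sigma_r=v_1-v_r$ and $\sum_{j=1}^r\sigma_j=0$. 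Subtracting equation (\ref{phi}) for $h'$ and for $h$ yields the quasi-linear elliptic system
\begin{align*}
\inum\partial\bar\partial v_j=(e^{\sigma_{j-1}}-1)\vol(H_{j-1})-(e^{\sigma_j}-1)\vol(H_j),\qquad j=1,\dots,r-1,
\end{align*}
with cyclic convention $H_0=H_r$; using $\sum v_j=0$, a telescoping check shows this equation in fact holds for the full cyclic range $j=1,\dots,r$.

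Following the Donaldson--Simpson trace trick adapted to the diagonal cyclic setting, I introduce
\begin{align*}
F\coloneqq \Tr(h^{-1}h')+\Tr(h'^{-1}h)-2r=2\sum_{j=1}^r\bigl(\cosh(v_j)-1\bigr)\geq 0,
\end{align*}
which vanishes pointwise iff all $v_j$ vanish there, so $F\equiv 0$ iff $h=h'$. Multiplying the $j$-th subtracted equation by $\sinh(v_j)$ and summing cyclically, the elementary identities $\sinh(v_{j+1})-\sinh(v_j)=2\cosh(\tfrac{v_j+v_{j+1}}{2})\sinh(\sigma_j/2)$ and $e^{\sigma_j}-1=2e^{\sigma_j/2}\sinh(\sigma_j/2)$ combine to yield the pointwise lower bound
\begin{align*}
\inum\partial\bar\partial F\;\geq\;4\sum_{j=1}^r\cosh\!\Bigl(\tfrac{v_j+v_{j+1}}{2}\Bigr)e^{\sigma_j/2}\sinh^2(\sigma_j/2)\,\vol(H_j)\;\geq\;0,
\end{align*}
where the gradient contribution $2\sum_j\cosh(v_j)\inum\partial v_j\wedge\bar\partial v_j\geq 0$ has been dropped on the right.

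The completeness hypothesis makes $\omega_0\coloneqq \sum_j\vol(H_j)$ a complete K\"ahler form on $X$, and with respect to it $F$ is weakly subharmonic. Using the norm equivalence between $(v_j)$ and $(\sigma_j)$ on the hyperplane $\{\sum v_j=0\}\simeq\{\sum\sigma_j=0\}$, the right-hand side above dominates a positive multiple of $F$ times $\omega_0$ (uniformly on sublevel sets of $F$). Applying the Cheng-Yau maximum principle (Section \ref{CYMMVI}) to $F$ with respect to $\omega_0$ produces a sequence $p_n\in X$ with $F(p_n)\to\sup_X F$ and $\Delta_{\omega_0}F(p_n)\to 0$; the inequality above then forces $\sigma_j(p_n)\to 0$, hence $F(p_n)\to 0$, so $\sup_X F=0$ and $h=h'$. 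Assumption $(\ast)$ is exactly what is needed here: outside the compact set $K$ the coefficients of (\ref{phi}) are $C^2$, so standard elliptic Schauder estimates give $C^2$ regularity of $F$ together with the decay of $v_j$ at infinity required to invoke Cheng-Yau, while on $K$ the inequality is interpreted distributionally.

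The main obstacle is the analytic justification that the formal computation leading to $\inum\partial\bar\partial F\geq 0$ remains valid across the locus where $\varphi$ is singular on $K$, and that Cheng-Yau's maximum principle applies (which requires either a Ricci lower bound for $\omega_0$ or a substitute available on cyclic Higgs-type geometries). The natural device is to approximate $\varphi$ by the smooth subharmonic weights $\phi_{q_N}$ of \cite{Miy2, Miy3}, run the argument for the associated smooth complete solutions, and pass to the limit; establishing the continuity of the complete-solution map under this approximation is presumably the technical core of \cite{Miy5}.
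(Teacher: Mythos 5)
Note first that the paper does not prove Theorem \ref{c1}; it is imported from the preprint \cite{Miy5}, and the only hint given here is the remark that assumption $(\ast)$ is "for using the maximum principle on open Riemann surfaces (cf.\ \cite[Section 3]{LM1})". Your Simpson/Li--Mochizuki-type trace argument is therefore very plausibly the intended skeleton, and the algebra up to the inequality $\inum\partial\bar\partial F\geq \sum_j c(\sigma_j)\vol(H_j)\geq 0$ (with $c(s)=\mathrm{const}\cdot e^{s/2}\sinh^2(s/2)\cosh(\cdot)$) is correct. But there are two genuine gaps.

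First, the step "the right-hand side dominates a positive multiple of $F$ times $\omega_0$" is asserted where the whole difficulty lives. Tracing against $\omega_0=\sum_k\vol(H_k)$ gives $\Delta_{\omega_0}F\gtrsim\sum_j c(\sigma_j)\,\vol(H_j)/\omega_0$, and the weights $\vol(H_j)/\omega_0$ are \emph{not} bounded below a priori: $\vol(H_r)$ vanishes wherever $\varphi=-\infty$, and nothing yet controls the ratios $\vol(H_j)\otimes\vol(H_k)^{-1}$ for a general complete solution --- obtaining such two-sided control is precisely the content of the mutual-domination estimates (Proposition \ref{r/2}), which themselves require a separate maximum-principle argument. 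The same uncontrolled ratios are what you would need to verify the Ricci lower bound for $\omega_0$ demanded by Theorem \ref{C-Ym}, and to check that your $f$ grows fast enough in $F$ (note that $f(u)=cu$ does \emph{not} satisfy condition (iii) of Theorem \ref{C-Ym}; you need the superlinear growth coming from the $\cosh$ factors, which again only helps once the volume ratios are controlled). In short, the proposal presupposes the quantitative estimates that constitute the hard part of the uniqueness proof; the expected route is to first bound the $v_j$ (e.g.\ by comparison of both solutions with a common barrier, as in \cite[Section 3]{LM1}) and only then run the maximum principle.

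Second, the closing suggestion to obtain the general case by approximating $\varphi$ by smooth weights is circular for a \emph{uniqueness} statement: to conclude $h=h'$ from $h_\epsilon\to h$ and $h_\epsilon\to h'$ you must already know that the complete solutions for $\varphi_\epsilon$ converge to \emph{whichever} complete solution for $\varphi$ you started with, which is essentially the uniqueness being proved. In this paper the approximation hypothesis (Condition \ref{three}) is used only to transfer a priori inequalities to a \emph{given} solution, not to identify two solutions. A correct use of $(\ast)$ is closer to what you sketch earlier: outside the compact set $K$ the coefficients are $C^2$ so $F$ is classically subharmonic and Cheng--Yau applies there, while on $K$ one argues distributionally; but this must be combined with the boundedness input above rather than with the approximation scheme.
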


\begin{theo}[\cite{Miy5}]\label{c2} {\it On the unit disc $\D\coloneqq \{z\in\C\mid |z|<1\}$, for any $\varphi$, there exists a real complete solution $h=(h_1,\dots, h_r)$ associated with $\varphi$. For each $j=1,\dots, [r/2]$, $h_j$ is of class $C^{2j-1,\alpha}$ for any $\alpha\in(0,1)$. }
\end{theo}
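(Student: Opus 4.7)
The plan is to construct the solution on $\D$ as a double limit: exhaust the disc by $\D_n\coloneqq\{|z|\leq 1-1/n\}$, approximate $\varphi$ by a decreasing sequence of smooth subharmonic weights $\varphi_\epsilon\searrow\varphi$ obtained by standard convolution regularization (legitimate because $\D$ carries a global flat coordinate and $\varphi$ is locally a classical subharmonic function plus a smooth one), and on each pair $(\D_n,\varphi_\epsilon)$ solve a Dirichlet problem for (\ref{phi}) in the real diagonal ansatz $h_j=h_{r-j+1}$. The desired solution on $\D$ is then produced by passing to the limit via uniform interior estimates.

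For the bounded-domain, smooth-weight problem on $\D_n$, the system (\ref{cyc2}) admits the explicit complete sub-solution $\underline{H}_k\coloneqq \lambda_k H_{\mathrm{hyp},n}$, where $H_{\mathrm{hyp},n}$ is the Poincar\'e metric on $\D_n$ and the $\lambda_k$ are the constants in (\ref{lambda}); this is a genuine sub-solution for the $\varphi_\epsilon$-perturbed system because the extra term $\vol(H_r)$, which scales with $e^{r\varphi_\epsilon}h_\refe^{-r}$, contributes with the favorable sign. A super-solution can be built explicitly from a large constant multiple of the smooth reference metric, using smoothness and uniform boundedness of $\varphi_\epsilon$ on $\overline{\D_n}$. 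Either a monotone iteration between these barriers or a continuity-method argument in the style of \cite{Don1, LM2} then produces a smooth real solution $h^{(n,\epsilon)}$ of (\ref{phi}) on $\D_n$ whose induced metrics satisfy $H_k^{(n,\epsilon)}\geq\lambda_k H_{\mathrm{hyp},n}$.

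To pass to the limit I would apply the Cheng--Yau maximum principle on $(\D_n,\underline{H}_k)$ to $\log(H_k^{(n,\epsilon)}/\underline{H}_k)$, yielding an interior upper bound on each compact $K\Subset\D$ uniform in $n,\epsilon$ once $\D_n\supset K$. Since $\varphi$ is locally bounded above, $e^{r\varphi_\epsilon}$ is locally bounded above uniformly in $\epsilon$, so the right-hand sides of (\ref{phi}) stay in $L^\infty_{\mathrm{loc}}$ uniformly; this gives uniform interior $W^{2,p}$ control. A diagonal subsequence then converges locally in $W^{2,p}$ to a real solution $h=(h_1,\dots,h_r)$ of (\ref{phi}) on $\D$ satisfying $H_k\geq\lambda_k H_{\mathrm{hyp}}$ (using $H_{\mathrm{hyp},n}\nearrow H_{\mathrm{hyp}}$ on compacts), which immediately implies completeness of every $H_k$.

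The regularity statement is then a bootstrap in $j$. Locally the system reads $\Delta\log h_j=f_j(h_{j-1},h_j,h_{j+1})+\delta_{j,1}\,c\,e^{r\varphi}$ with smooth nonlinearities $f_j$, so the non-smooth ingredient $e^{r\varphi}$ appears only in the $j=1$ equation and is locally bounded above; Calder\'on--Zygmund theory places every $h_j$ in $W^{2,p}_{\mathrm{loc}}\subset C^{1,\alpha}_{\mathrm{loc}}$. The real constraint $h_j=h_{r-j+1}$ leaves $h_1,\dots,h_{[r/2]}$ as the independent variables, and an alternating Schauder bootstrap starting from the middle index (whose equation involves only already-smooth neighbors) and working outward gains two derivatives on $h_j$ at each step once its neighbors $h_{j\pm 1}$ are in the next higher H\"older class; an explicit induction then delivers $h_j\in C^{2j-1,\alpha}_{\mathrm{loc}}$ for every $\alpha\in(0,1)$ and every $j\leq [r/2]$. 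The principal obstacle I anticipate is the uniform construction of the super-solution as $\epsilon\to 0$ and $n\to\infty$: the barrier must dominate the iterate pointwise on $\D_n$ without blowing up on the singular locus of $\varphi$ in any fixed compact subset of $\D$. Once this is arranged, the Cheng--Yau upper bound, the hyperbolic-type lower barrier, and the Schauder bootstrap are essentially forced by the structure of (\ref{phi}).
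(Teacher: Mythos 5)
First, a caveat: the paper does not prove Theorem~\ref{c2}; it is imported from the preprint \cite{Miy5}, so there is no in-paper argument to compare your proposal against. Your overall architecture (exhaust $\D$ by $\D_n$, mollify $\varphi$ to smooth $\varphi_\epsilon$, solve Dirichlet problems in the real diagonal ansatz, and pass to the limit with barrier and Cheng--Yau estimates) is the standard route and is clearly the one the author has in mind --- Theorem~\ref{c3} is precisely the $\epsilon\searrow 0$ half of such an argument, and the exhaustion half follows \cite{LM1, LM2}. The computation that $\lambda_k H_{\mathrm{hyp}}$ is a subsolution because $\vol(H_0)\geq 0$ enters the $j=1$ equation with a negative sign is correct and is what ultimately yields $H_k\geq \lambda_k H_{\mathrm{hyp}}$ and hence completeness.

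That said, there is a genuine gap, and you have named it yourself without closing it: the uniform upper control of the approximating solutions in the double limit $n\to\infty$, $\epsilon\searrow 0$. A ``large constant multiple of the reference metric'' cannot serve as a supersolution uniformly in $n$, because the Dirichlet data on $\partial\D_n$ must blow up to produce a \emph{complete} limit; and applying Cheng--Yau to $\log(H_k^{(n,\epsilon)}/\underline{H}_k)$ only bounds ratios of consecutive metrics (the quantities $\sigma_j$, $\sigma_j'$ of Section~\ref{6}), not $H_k$ itself from above. The missing ingredient is an Ahlfors--Schwarz/maximal-solution comparison (as in \cite{LM1}) or the monotonicity in $n$ and $\epsilon$ that Theorem~\ref{c3} exploits, which is what actually prevents the limit from being identically $+\infty$ on compact sets; ``once this is arranged'' is exactly the clause that conceals the hardest step. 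Secondly, your bootstrap is described in the wrong direction: the middle index is the \emph{most} regular component, not the starting point whose ``neighbors are already smooth.'' The induction must start at $j=1$, whose equation contains the merely $L^\infty_{\mathrm{loc}}$ term $\vol(H_0)\sim e^{r\varphi}$ and therefore caps $h_1$ at $C^{1,\alpha}$ permanently, and then propagate inward: the equation for $h_{j}$ has right-hand side with the regularity of $h_{j-1}$, so elliptic regularity gains exactly two orders per step, giving $h_j\in C^{2j-1,\alpha}$. The final formula you state is right, but the mechanism as written (gaining regularity ``working outward'' from the middle) would not run, since the outer components never become smooth.
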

\begin{theo}[\cite{Miy5}]\label{c3}{\it
Let $(e^{-\varphi_\epsilon}h_\refe)_{\epsilon>0}$ be a family of smooth semipositive Hermitian metrics on the canonical bundle, each of which is defined on a disc $\D_\epsilon \coloneqq \{z \in \C \mid |z| < 1 - \epsilon\}$, that satisfies the following property:
\begin{itemize}
\item For each $\epsilon>\epsilon^\prime>0$, $\varphi_{\epsilon^\prime}\leq \varphi_\epsilon$ and $(\varphi_\epsilon)_{0<\epsilon<1}$ converges to a function $\varphi$ on $\D$ that is locally a sum of a smooth function and a subharmonic function as $\epsilon\searrow 0$.
\end{itemize}
Then the corresponding family of smooth complete solutions $(h_\epsilon=(h_{1,\epsilon},\dots, h_{r,\epsilon}))_{\epsilon>0}$ monotonically converges to a complete solution $h=(h_1,\dots, h_r)$ associated with $\varphi$ as $\epsilon\searrow 0$.}
\end{theo}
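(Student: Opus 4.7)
The plan is to argue in three stages: first establish monotonicity of the family $(h_\epsilon)_{\epsilon>0}$ in $\epsilon$, then extract a limit via local elliptic regularity, and finally verify that the limit is a complete solution to equation (\ref{phi}) associated with $\varphi$.

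\textbf{Monotonicity.} Fix $0 < \epsilon' < \epsilon$, so that $h_\epsilon$ and $h_{\epsilon'}$ are both defined on $\D_\epsilon$ and $\varphi_{\epsilon'}\leq \varphi_\epsilon$ there. Writing (\ref{phi}) in the variables $H_1,\dots,H_{r-1}$ via (\ref{cyc2}) and recalling $H_0 = H_1^{-1}\otimes\cdots\otimes H_{r-1}^{-1}\otimes (e^{-\varphi}h_\refe)^{-r}$, one sees that the weight $\varphi$ enters only through $\vol(H_0)$ in the $j=1$ equation. Set $u_j \coloneqq \log(H_{j,\epsilon'}/H_{j,\epsilon})$ on $\D_\epsilon$. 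Subtracting the two systems and using that the coupling matrix of (\ref{cyc2}) is the positive-definite Cartan matrix of type $A_{r-1}$, the $(u_j)$ satisfy a quasilinear elliptic system whose forcing term carries the sign of $\varphi_\epsilon - \varphi_{\epsilon'}\geq 0$. Applying the Cheng--Yau maximum principle on the complete K\"ahler manifold $(\D_\epsilon, \omega_{H_{1,\epsilon}})$ (or on a convenient convex combination of the $\omega_{H_{j,\epsilon}}$) to an appropriate positive linear combination $\sum a_j u_j$, chosen so that the Cartan coupling acts diagonally, forces every $u_j$ to have a definite sign. This yields the monotonicity $H_{j,\epsilon'}\geq H_{j,\epsilon}$ (in the appropriate direction) on $\D_\epsilon$.

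\textbf{Limit extraction.} Given monotonicity, a pointwise limit $h = (h_1,\dots,h_r)$ exists on $\D$. The monotone family is trapped on any compact $K\Subset \D$ between two fixed positive Hermitian metrics (for instance $h_{\epsilon_0}$ with $\epsilon_0>0$ as one barrier and the hyperbolic-type solution coming from $\varphi\equiv -\infty$ as the other), so the $h_{j,\epsilon}$ admit local uniform bounds independent of $\epsilon$. Combined with the $L^1_{\rm loc}$ convergence $\varphi_\epsilon \to \varphi$, interior Schauder / $L^p$ estimates for the quasilinear elliptic system (\ref{cyc2}) upgrade monotone pointwise convergence to convergence in $C^{2j-1,\alpha}_{\rm loc}$ for $h_j$, matching the regularity asserted in Theorem \ref{c2}. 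Passing to the limit in (\ref{phi}) identifies $h$ as a solution for $\varphi$.

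\textbf{Completeness of the limit.} This is the subtlest point. Since each $H_{j,\epsilon}$ is monotone in $\epsilon$ and dominates (from one side) the complete metric $H_{j,\epsilon_0}$ for any fixed reference $\epsilon_0>0$, the induced K\"ahler metrics in the limit remain bounded below by a complete K\"ahler metric, hence stay complete. Independence of the limit from the particular approximating sequence $(\varphi_\epsilon)$ then follows from the uniqueness Theorem \ref{c1}.

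\textbf{Main obstacle.} The core difficulty is the monotonicity step: one has to isolate the correct scalar quantity built from $(u_j)$, exploiting the positive-definiteness of the $A_{r-1}$ Cartan matrix, whose sign is governed by $\varphi_\epsilon - \varphi_{\epsilon'}$, and then apply the Cheng--Yau maximum principle on the (incomplete) domain $\D_\epsilon$ equipped with a complete auxiliary K\"ahler metric coming from the solution itself. Once monotonicity is secured, the remaining compactness, regularity, and completeness statements follow relatively routinely from elliptic theory and the monotone comparison.
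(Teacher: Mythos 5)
This theorem is not proved in the paper: it is quoted from the author's separate preprint \cite{Miy5}, so there is no in-paper argument to compare yours against. I can only assess your proposal on its own terms. Your three-stage plan (monotone comparison, local limit extraction, completeness via a barrier) is the natural strategy and almost certainly the intended one, but the pivotal monotonicity step is not correct as written. You propose to apply the Cheng--Yau maximum principle to a linear combination $\sum a_j u_j$ ``chosen so that the Cartan coupling acts diagonally.'' Diagonalizing the linear $A_{r-1}$ coupling does not diagonalize the nonlinear terms $\vol(H_{j,\epsilon'})-\vol(H_{j,\epsilon})$, which are of the form $c_j(e^{u_{j\pm1}}-1)$ with coefficients depending on the unknown solutions; after the change of basis the resulting scalar inequality has no usable sign. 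The standard device for Toda-type systems is instead to apply the maximum principle to $u_{\max}\coloneqq\max_j u_j$ (or to argue at a point where $\sup u_{j_0}$ is attained for the maximizing index $j_0$, where $u_{j_0}\geq u_{j_0\pm1}$ gives the needed sign in the single $j_0$-th equation). Moreover the comparison here is intrinsically asymmetric: $h_\epsilon$ is complete on $\D_\epsilon$ while $h_{\epsilon'}$ restricted to $\D_\epsilon$ is \emph{not} complete there, so $H_{j,\epsilon}$ blows up at $\partial\D_\epsilon$ relative to $H_{j,\epsilon'}$ and the relevant $u_j$ tend to $-\infty$ at the boundary. This boundary behavior, not the completeness of an auxiliary background metric, is what lets one localize the supremum in the interior; your write-up does not use it and would need to.

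Two further points. First, the claimed upgrade to $C^{2j-1,\alpha}_{\rm loc}$ convergence cannot follow from interior Schauder or $L^p$ estimates, because the coefficients $\varphi_\epsilon$ converge to $\varphi$ only monotonically and in $L^1_{\rm loc}$, and $\varphi$ may equal $-\infty$ on a nonempty set; all one can (and need) do is pass to the limit in the distributional formulation of (\ref{phi}) using the monotone convergence together with local two-sided bounds. Second, your completeness barrier $h_{\epsilon_0}$ is complete only on $\D_{\epsilon_0}$, not on $\D$, so domination by it does not give completeness of the limit on $\D$; the correct barrier is the $\varphi\equiv-\infty$ solution $\lambda_j g_{\D_\epsilon}$ on each $\D_\epsilon$ together with the Schwarz-lemma monotonicity $g_{\D_\epsilon}\geq g_{\D}$ on $\D_\epsilon$, which in the limit yields $H_j\geq\lambda_j g_{\D}$ and hence completeness. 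With the maximum-principle step repaired along the lines above and these two adjustments, the architecture of your argument is sound.
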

The assumption $(\ast)$ in the uniqueness part of Theorem \ref{c1} is for using the maximum principle on open Riemann surfaces (cf. \cite[Section 3]{LM1}). The author expects that this assumption is unnecessary, but has not yet been able to prove the uniqueness without it. If we can drop the assumption $(\ast)$ in Theorem \ref{main theorem 1}, then, it follows from Theorem \ref{main theorem 2} that for every $\varphi$ on a hyperbolic surface, by lifting to the universal covering space and applying the uniqueness argument on the unit disc (cf. \cite[Proposition 5.7]{LM1}), there exists a complete solution associated with $\varphi$.

Theorem \ref{c3} on approximation for complete solutions is intended to approximately apply the various maximum principle techniques to complete solutions to which these techniques cannot be applied directly. Note that the approximation $(\varphi_\epsilon)_{0<\epsilon<1}$ of $\varphi$ in Theorem \ref{c3} always exists by the standard theory of mollification of subharmonic functions (cf. \cite{Ran1}). If Theorem \ref{c1} on the uniqueness of complete solutions is fully established, then for any $\varphi$ on a hyperbolic Riemann surface, the unique complete solution associated with $\varphi$ can be approximated by a smooth complete solution when lifted to the universal covering space. Similar results on parabolic Riemann surfaces have not yet been established but will be addressed in a subsequent paper. For the purpose of citation when stating the main theorem, we summarize the properties of approximations that we would like a complete solution to satisfy:

\begin{cond}\label{three} Let $h$ be a complete solution to equation (\ref{phi}) associated with $\varphi$. Let $\widetilde{X}$ denote the universal covering space of $X$. We choose the reference metric $h_\refe$ to be flat when pulled back to $\widetilde{X}$. Let $\widetilde{h}$ and $e^{-\widetilde{\varphi}}\widetilde{h}_\refe$ denote the pullbacks of $h$ and $e^{-\varphi}h_\refe$ to $\widetilde{X}$, respectively. Let $(\widetilde{\varphi}_\epsilon)_{\epsilon>0}$ be a mollification of $\widetilde{\varphi}$. We consider the following condition:
\begin{itemize}
\item There exists a family of complete solutions $(h_\epsilon)_{\epsilon>0}$, where each $h_\epsilon$ is associated with $\widetilde{\varphi}_\epsilon$, such that for every $x \in \widetilde{X}$, $h_\epsilon(x) \to \widetilde{h}(x)$ as $\epsilon \searrow 0$.
\end{itemize}
\end{cond}

\begin{rem} One of the author's initial motivation was to construct a theory that describes the variations of the ``random'' harmonic metrics $(h_N)_{N\in\N}$ associated with random sections $(q_N\in H^0(K_X^N))_{N\in\N}$ (cf. \cite{BCHM1, SZ1}). The author does not know if there is any research on harmonic metrics related to the above, but would like to remark on the following:
Hitchin equation for rank $2$ cyclic Higgs bundles has similarity to the PDE appeared in the Ginzburg-Landau model in physics (cf. \cite[Section 5]{DW1}, \cite[Section 11]{Hit1}, \cite{JT1},), and that the Ginzburg-Landau model has ties to the asymptotic behavior of empirical measures (cf. \cite{Ser1, Ser2}). Although indirect, this is the only known connection to the author between harmonic metrics and probability theory. 
The author is not aware of any work other than \cite{Miy2, Miy3} on harmonic metrics on cyclic ramified coverings 
considered in \cite{Miy2, Miy3}. 
\end{rem}

\subsection{Shannon entropy}\label{Shannon}
We provide a very quick review of Shannon entropy, which is the minimum required for use in this paper. The Shannon entropy is defined as follows:
\begin{defi}[\cite{Sh1}] 
Let $r\geq 2$ be a natural number and let $P_0,P_1,\dots, P_{r-1}$ be nonnegative numbers such that $P_0+P_1+\cdots+ P_{r-1}=1$. Then the Shannon entropy $S(P_0,P_1, \dots, P_{r-1})$ is defined as follows:
\begin{align*}
S(P_0, P_1,\dots, P_{r-1})\coloneqq -\sum_{j=0}^{r-1} P_j \log P_j.
\end{align*}
\end{defi}
It can be checked that we have the following estimate:
\begin{align*}
0\leq S(P_0,P_1,\dots, P_{r-1})\leq \log r.
\end{align*}
The minimum is attained if and only if $P_j=1$ for some $j=0,1,\dots,{r-1}$ and the others are 0, and the maximum is attained if and only if $P_0=P_1=\cdots =P_{r-1}=\frac{1}{r}$ (cf. \cite{Lei1, PW1, Sh1}). Shannon entropy quantifies the degree of bias of $P_0,P_1,\dots, P_{r-1}$. Shannon entropy is the basis for a very wide variety of entropy concepts, which we will not go into in depth in this paper. See, for example, \cite{KH1, Lei1, PW1} and the literature cited therein.

\section{The Cheng-Yau maximum principle and the mean value inequality}\label{CYMMVI}
This section briefly reviews the Cheng-Yau maximum principle \cite{CY1} (see also \cite[Section 3.1.2]{LM1}) and the mean value inequality \cite[Lemma 2.5]{CT1} (see also \cite[Section 2]{DL3}). The following theorem is known as the Cheng-Yau maximum principle, which played a crucial role in \cite{LM1}:
\begin{theo}[\cite{CY1}]\label{C-Ym}{\it Let $(M, g_M)$ be a complete Riemannian manifold with Ricci curvature bounded from below. Let $u$ be a real-valued $C^2$ function on $M$ satisfying $\Delta_{g_M} u \geq f(u)$, where $f:\R\rightarrow \R$ is a function, and where we denote by $\Delta_{g_M} = -d^\ast d$ the negative Laplacian. Suppose that there exists a continuous positive function $g:[a,\infty)\rightarrow \R_{>0}$ such that
\begin{enumerate}[(i)]
\item $g$ is non-decreasing;
\item $\liminf_{t\to\infty}\frac{f(t)}{g(t)}>0$;
\item $\int_a^\infty(\int_b^tg(\tau)d\tau)^{-1/2}dt<\infty$ for some $b\geq a$.
\end{enumerate}
Then the function $u$ is bounded above. Moreover, if $f$ is lower semicontinuous, then $f(\sup_M u)\leq 0$.
}
\end{theo}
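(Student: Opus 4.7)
The plan is to prove the two conclusions in sequence: first upgrade hypotheses (i)--(iii) to the boundedness $\sup_M u < \infty$, and then, assuming $f$ is lower semicontinuous, deduce $f(\sup_M u) \leq 0$ as a routine application of the Omori-Yau maximum principle.

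The foundational tool is the Omori-Yau principle for complete manifolds with Ricci curvature bounded below: every $C^2$ function $v$ that is bounded above admits a sequence $(x_n) \subset M$ along which $v(x_n) \to \sup_M v$, $|\nabla v(x_n)| \to 0$, and $\limsup_n \Delta v(x_n) \leq 0$. I would recall the standard proof, which perturbs $v$ by $-\epsilon F(\rho)$ with $\rho = d(p,\cdot)$, choosing the radial function $F$ so that Laplacian comparison (which holds thanks to the Ricci lower bound) makes $\Delta F(\rho)$ uniformly bounded, then extracts a near-supremum point and sends $\epsilon \to 0$. Once $u$ is known to be bounded above, applying this to $u$ itself gives $f(u(x_n)) \leq \Delta u(x_n)$ along the extracted sequence; lower semicontinuity then yields $f(\sup_M u) \leq \liminf_n f(u(x_n)) \leq 0$, which is the second assertion of the theorem.

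For the boundedness, I introduce $G(t) \coloneqq \int_b^t g(\tau)\, d\tau$ (strictly increasing by (i)) and the $C^2$ decreasing function $H(t) \coloneqq \int_t^\infty G(s)^{-1/2}\, ds$, finite by (iii), satisfying $H(t) \searrow 0$ and $H'(t) = -G(t)^{-1/2}$, $H''(t) = \tfrac{1}{2} g(t) G(t)^{-3/2}$. Assume for contradiction that $\sup_M u = +\infty$, so $\psi \coloneqq -H \circ u$ is bounded above with $\sup_M \psi = 0$. Applying Omori-Yau to $\psi$ delivers $(x_n)$ with $u(x_n) \to \infty$, $|\nabla \psi(x_n)| = G(u(x_n))^{-1/2} |\nabla u(x_n)| \to 0$, and $\limsup_n \Delta \psi(x_n) \leq 0$. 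A direct chain-rule computation using $\Delta u \geq f(u)$ and $|\nabla u|^2 = G(u)|\nabla \psi|^2$ yields
\begin{align*}
\Delta \psi \;\geq\; G(u)^{-1/2}\bigl(f(u) - \tfrac{1}{2} g(u)|\nabla \psi|^2\bigr),
\end{align*}
and condition (ii) in the form $f(t) \geq \alpha g(t)$ for some $\alpha > 0$ and large $t$ bounds the right-hand side along $(x_n)$ from below by $\tfrac{\alpha}{2}\, g(u(x_n))/\sqrt{G(u(x_n))}$ eventually.

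The main obstacle is to rule out that $g(u(x_n))/\sqrt{G(u(x_n))}$ decays to zero along the sequence. The argument I would use is the following: by monotonicity (i), $G(t) \leq g(t)(t-b)$, so $g(t)/\sqrt{G(t)} \geq \sqrt{G(t)}/(t-b)$; if $g(t_n)/\sqrt{G(t_n)} \to 0$ along some $t_n \to \infty$, then $G(t_n) = o(t_n^2)$, and on each interval $[t_n/2, t_n]$ (thinned so they are disjoint) the integrand $G^{-1/2}$ is at least $G(t_n)^{-1/2} \gg 1/t_n$, making the integral over this interval $\gg 1$; summing contradicts (iii). Hence $\liminf_{t \to \infty} g(t)/\sqrt{G(t)} > 0$, which forces $\Delta \psi(x_n)$ to stay bounded below by a positive constant on a subsequence, contradicting $\limsup_n \Delta \psi(x_n) \leq 0$ and thus establishing $\sup_M u < \infty$. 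I expect this quantitative extraction from (iii) alone to be the technical heart of the proof; if the elementary interval-packing sketched above proves too crude for the precise statement, the backup is to replace $H$ by $\int_t^\infty G^{-\gamma} ds$ for a suitable $\gamma > 1/2$, amplifying the dominant term in the Laplacian computation while still using (iii).
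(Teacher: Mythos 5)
The paper does not prove this statement; it is quoted verbatim from Cheng--Yau \cite{CY1} as background, so there is no internal proof to compare against. Your argument is essentially the classical one (compose $u$ with an auxiliary function built from $g$ and apply the Omori--Yau maximum principle), and I believe it is correct. The chain-rule computation checks out: with $\psi=-H(u)$ one gets $\Delta\psi\geq G(u)^{-1/2}\bigl(f(u)-\tfrac12 g(u)|\nabla\psi|^2\bigr)$, condition (ii) plus $|\nabla\psi(x_n)|\to 0$ gives $\Delta\psi(x_n)\geq\tfrac{\alpha}{2}\,g(u(x_n))G(u(x_n))^{-1/2}$ eventually, and your lower bound $\liminf_{t\to\infty}g(t)/\sqrt{G(t)}>0$ is genuinely forced by (i) and (iii). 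In fact your worry about the "interval-packing" being too crude is unfounded: if $g(t_n)/\sqrt{G(t_n)}\to 0$ then $\sqrt{G(t_n)}=o(t_n)$, and already the single integral $\int_{t_n/2}^{t_n}G^{-1/2}\,ds\geq\tfrac12\,t_n/\sqrt{G(t_n)}\to\infty$ contradicts (iii) with no summation over disjoint intervals needed. The second conclusion via lower semicontinuity is routine and correct.

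Two points deserve attention in a careful write-up. First, $\psi=-H\circ u$ is only defined where $G(u)>0$, i.e.\ where $u>b$; if $u$ dips below $b$ the function is not globally defined and Omori--Yau cannot be applied to it as written. This is easily patched (extend $H$ to a bounded, positive, non-increasing $C^2$ function on all of $\R$ agreeing with the original on $[b+1,\infty)$; since $u(x_n)\to\infty$, only the original formula is used along the relevant sequence), but it should be said. Second, you are invoking the full Omori--Yau principle, including the gradient condition $|\nabla v(x_n)|\to 0$, which you genuinely need to absorb the $-\tfrac12 g(u)|\nabla\psi|^2$ term; the naive perturbation $v-\epsilon F(\rho)$ sketch does deliver this provided $F'$ is bounded (e.g.\ $F(\rho)=\sqrt{1+\rho^2}$) and the cut-locus issue is handled by Calabi's trick, and this version for Ricci bounded below is Yau's theorem, so citing it is legitimate. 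With those caveats the proposal is a complete and correct proof.
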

The following theorem is known as the mean value inequality (see also \cite[Section 2]{DL3}):
\begin{theo}[\cite{CT1}]\label{MVI}{\it Let $(M,g_M)$ be a complete Riemannian manifold of dimension $n$. We consider the following differential inequality 
\begin{align}
\Delta_{g_M}u\leq c u \label{cuagain}
\end{align}
with some positive constant $c$. Let $x_0\in M$ be a point of $M$. We denote by $B_g(x_0,r)$ the open geodesic ball of radius $r$ centered at $x_0$, and by $\Vol(B(x_0, r))$ the volume of the geodesic ball. We fix a positive constant $R_0$ and suppose the following holds for $x_0$ and $R_0$:
\begin{enumerate}[(i)]
\item The Poincar\'e and the Sobolev inequalities hold for functions supported on $B_g(x_0, R_0)$ with constants $c_p$ and $c_s$;
\item There exists a positive constant $c_2$ such that $\Vol(B(x_0,r))\leq c_2r^n$ for all $r\leq R_0$.
\end{enumerate}
Then there exist positive constants $p_0$ and $C$ depending only on $n, c, c_2, c_p, c_s$ such that for any nonnegative $W^{1,2}$-function $u$ satisfying (\ref{cuagain}) on $B_g(x_0, R_0)$ and any $0<p<p_0$ the following inequality holds:
\begin{align*}
\inf_{x\in B_g(x_0,R_0/4)} u(x)\geq C\left(\int_{B_g(x_0,R_0/4)}u^pd\mu_g\right)^{1/p},
\end{align*}
where $d\mu_g$ is the volume measure. In particular, there exist constants $C>0$ and $0<p<1$ such that 
\begin{align*}
u(x_0)\geq C\left(\int_{B_g(x_0,R_0/4)}u^pd\mu_g\right)^{1/p}.
\end{align*}
}
\end{theo}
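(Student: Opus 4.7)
The statement is the standard weak Harnack inequality for nonnegative supersolutions of the elliptic operator $L = -\Delta_{g_M} + c$, localized to a single geodesic ball with quantitative volume control. The strategy is Moser iteration combined with the John--Nirenberg inequality. First I would reduce to the case $u > 0$ by replacing $u$ with $u_\epsilon \coloneqq u + \epsilon$: since $\Delta_{g_M} u_\epsilon = \Delta_{g_M} u \le cu \le cu_\epsilon$, one proves the estimate for $u_\epsilon$ with $\epsilon$-independent constants and lets $\epsilon \searrow 0$.

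For the iteration I would use test functions of the form $\varphi = \eta^2 u^{-\alpha-1}$ with $\alpha > 0$, where $\eta$ is a Lipschitz cutoff supported in a ball $B_r \subset B_g(x_0, R_0)$, equal to one on a concentric smaller ball $B_{r'}$, and satisfying $|\nabla \eta| \le C/(r-r')$. Substituting into the weak supersolution inequality
\begin{equation*}
\int_M \langle \nabla u, \nabla \varphi \rangle \, d\mu_g \ge -\int_M c\, u\, \varphi \, d\mu_g
\end{equation*}
and absorbing gradient cross-terms by Young's inequality produces the Caccioppoli estimate
\begin{equation*}
\int_{B_{r'}} |\nabla (u^{-\alpha/2})|^2 \, d\mu_g \le \frac{C(\alpha, c)}{(r-r')^2} \int_{B_r} u^{-\alpha} \, d\mu_g,
\end{equation*}
which combined with the Sobolev inequality of hypothesis (i) gives the reverse Hölder inequality
\begin{equation*}
\|u^{-\alpha/2}\|_{L^{2\gamma}(B_{r'})} \le \left(\frac{C}{(r-r')^2}\right)^{1/2} \|u^{-\alpha/2}\|_{L^2(B_r)},
\end{equation*}
with Sobolev exponent $\gamma > 1$ depending only on $n$. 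Iterating this on a geometric sequence of radii collapsing from $R_0/2$ to $R_0/4$, and letting $\alpha \to \infty$, yields for every small $p_0 > 0$ the bound
\begin{equation*}
\sup_{B_g(x_0, R_0/4)} u^{-1} \le C \left(\int_{B_g(x_0, R_0/2)} u^{-p_0} \, d\mu_g\right)^{1/p_0},
\end{equation*}
with $C$ depending only on $n, c, c_2, c_p, c_s, p_0$.

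The main obstacle is the \emph{crossover} from this negative-exponent bound to the positive-exponent norm appearing in the statement. For this I would apply the John--Nirenberg inequality to $\log u$. Substituting $\varphi = \eta^2/u$ into the weak inequality and using Young's inequality again gives
\begin{equation*}
\int_{B_r} \eta^2 |\nabla \log u|^2 \, d\mu_g \le C \int_{B_r} (|\nabla \eta|^2 + \eta^2) \, d\mu_g,
\end{equation*}
so combined with the Poincaré inequality of hypothesis (i) this places $\log u$ in $\mathrm{BMO}(B_g(x_0, R_0/2))$ with norm controlled in terms of $n, c, c_p, c_s, c_2$. The John--Nirenberg inequality then produces a universal small $p_0 > 0$ together with a constant $C$ satisfying
\begin{equation*}
\left(\int_{B_g(x_0, R_0/2)} u^{p_0} \, d\mu_g\right)\left(\int_{B_g(x_0, R_0/2)} u^{-p_0} \, d\mu_g\right) \le C \, \Vol(B_g(x_0, R_0/2))^2.
\end{equation*}
Chaining this with the negative-exponent Moser iteration above, and then using monotonicity of the $L^{p_0}$-integral in the domain to replace $B_g(x_0, R_0/2)$ by the smaller $B_g(x_0, R_0/4)$ on the right (which only weakens the right-hand side and so preserves the inequality), yields the theorem for $p = p_0$; Hölder's inequality then covers every $0 < p < p_0$, and the final clause is immediate since $x_0 \in B_g(x_0, R_0/4)$.

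The most delicate technical point throughout is the uniformity of $p_0$ and $C$ in the stated parameters: the Caccioppoli constants pick up $c$-dependence only through the $-\int c u \varphi$ term, while hypothesis (ii) is precisely what is needed so that John--Nirenberg on a single geodesic ball delivers a small exponent $p_0 > 0$ depending only on $n$ and $c_2$, rather than on the global doubling geometry of the possibly noncompact manifold $(M, g_M)$.
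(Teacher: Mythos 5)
The paper gives no proof of this statement: it is imported verbatim from Choi--Treibergs \cite{CT1} (with only the remark that $W^{1,2}_{\mathrm{loc}}$ regularity suffices), and that reference establishes it by exactly the route you describe, namely Moser iteration with negative exponents followed by the John--Nirenberg (or Bombieri--Giusti) crossover for $\log u$. Your outline is the standard weak Harnack argument and is essentially the same proof as in the cited source, so there is nothing to reconcile with the paper beyond the bookkeeping of where hypotheses (i) and (ii) enter, which you already flag correctly.
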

In \cite{CT1}, $u$ is assumed to be a $W^{1,2}$-function, but from reading the proof, we see that belonging to $W^{1,2}_{\mathrm{loc}}$ is sufficient. We will use Theorem \ref{C-Ym} and Theorem \ref{MVI} in Section \ref{6}. 
\section{Definition of entropy}\label{4}
Let $h=(h_1,\dots, h_r)$ be a solution to equation (\ref{phi}) and let $H_1,\dots, H_r$ be the Hermitian metrics constructed from $h$ and $e^{-\varphi}h_\refe$ as in Section \ref{2.2}. Then we define a function, which we call entropy, as follows:
\begin{defi}\label{entropy2} 
For each $j=0, 1,\dots, r-1$ and non-zero real number $\beta$, let $p_j(r,\beta, \varphi):X\rightarrow [0,1]$ be a nonnegative function defined as follows:
\begin{align*}
p_j(r,\beta, \varphi)\coloneqq \frac{\vol(H_j)^\beta}{\sum_{j=0}^{r-1}\vol(H_j)^\beta},
\end{align*}
where $\vol(H_j)^\beta/\sum_{j=0}^{r-1}\vol(H_j)^\beta$ is understood to be $(\vol(H_j)/\vol(H_\refe))^\beta/\sum_{j=0}^{r-1}(\vol(H_j)/\vol(H_\refe))^\beta$ for some reference metric $H_\refe$, which does not affect $p_j(r,\beta,\varphi)$. We call the following function entropy:
\begin{align*}
S(r,\beta, \varphi)\coloneqq -\sum_{j=0}^{r-1} p_j(r,\beta, \varphi)\log p_j(r,\beta, \varphi).
\end{align*}
\end{defi}
\begin{rem} The constant $\beta$ is modelled after the inverse temperature in the canonical ensemble in statistical mechanics (cf. \cite{LL1}). We will further pursue the analogy with the canonical ensemble in the subsequent paper \cite{Miy4}.
\end{rem}

\section{Main theorem}\label{5}
We introduce a notation. We denote by $S_{r,\beta}$ the entropy for the weight which is identically $-\infty$:
\begin{align*}
&S_{r,\beta}\coloneqq -\sum_{j=1}^{r-1} \hat{p}_{j,\beta}\log \hat{p}_{j,\beta}, \\
& \hat{p}_{j,\beta}\coloneqq \frac{\lambda_j^\beta}{\sum_{j=1}^{r-1}\lambda_j^\beta}.
\end{align*}
Our main theorems are as follows:
\begin{theo}\label{main theorem a} {\it Let $e^{-\varphi}h_\refe$ be a semipositive singular Hermitian metric on $K_X\rightarrow X$ that is not identically $\infty$ and is non-flat. Suppose that there exists a complete solution $h=(h_1,\dots, h_r)$ to equation (\ref{phi}) in Section \ref{2.2} associated with $\varphi$ that satisfies Condition \ref{three} in Section \ref{2.2}. Suppose that there exists a complete solution $h=(h_1,\dots, h_r)$ to equation (\ref{phi}) in Section \ref{2.2} associated with $\varphi$ that satisfies Condition \ref{three} in Section \ref{2.2}. When $r=2,3$, suppose in addition that $e^{\varphi}h_\refe^{-1}$ belongs to $W^{1,2}_{loc}$.
Then, for any non-zero real number $\beta$, the entropy $S(r,\varphi, \beta)$ constructed from the complete solution $h$ satisfies the following uniform estimate:
\begin{align*}
S_{r,\beta}\leq S(r,\varphi,\beta)(x)<\log r\ \text{for any $x\in X$},
\end{align*}
where $S_{r,\beta}$ is the entropy for the weight function which is identically $-\infty$. Moreover, the equality in the lower bound of $S(r,\varphi,\beta)(x)$ is achieved if and only if $r=2,3$ and $\varphi(x)=-\infty$. 
}
\end{theo}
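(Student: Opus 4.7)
The plan is to prove the theorem in three stages: first, extend the Dai-Li/Li-Mochizuki chain of estimates (Theorem \ref{estimate}) from cyclic Higgs bundles to general subharmonic weights; second, derive the strict upper bound from Shannon's inequality; third, derive the lower bound together with the equality case by a Schur-concavity argument.

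For the extension, I would pull everything back to the universal cover $\widetilde{X}$ and exploit the smooth complete solutions $h_\epsilon$ associated with the mollifications $\widetilde{\varphi}_\epsilon$ of $\widetilde{\varphi}$ supplied by Condition \ref{three}. For each fixed $\epsilon$ the weight is smooth, so the Dai-Li/Li-Mochizuki argument applies verbatim: the functions $\log(H_{j-1}^{(\epsilon)}\otimes(H_j^{(\epsilon)})^{-1})$ satisfy the elliptic inequality derived from equation (\ref{cyc2}), and the Cheng-Yau maximum principle (Theorem \ref{C-Ym}) yields
\begin{align*}
\frac{\lambda_{j-1}}{\lambda_j}<H_{j-1}^{(\epsilon)}\otimes(H_j^{(\epsilon)})^{-1}<1\quad(2\leq j\leq[r/2]),\qquad H_0^{(\epsilon)}\otimes(H_1^{(\epsilon)})^{-1}<1.
\end{align*}
Letting $\epsilon\searrow 0$ along the monotone convergence of Condition \ref{three} gives the corresponding non-strict inequalities for $h$. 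The strong maximum principle then upgrades the inner estimates ($2\leq j\leq[r/2]$) to strict inequalities under the non-flatness hypothesis. Strictness of the boundary estimate $H_0\otimes H_1^{-1}<1$ is automatic when $r\geq 4$ from the chain of strict inner estimates, but for $r=2,3$ the inner chain is vacuous and one must rule out pointwise saturation using the mean value inequality (Theorem \ref{MVI}); this is the role of the $W^{1,2}_{loc}$ hypothesis on $e^\varphi h_\refe^{-1}$.

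The upper bound $S(r,\varphi,\beta)(x)<\log r$ is then immediate from Shannon's inequality together with the strict inequality among the volumes $\vol(H_j)(x)$. For the lower bound, set $a_j(x):=\vol(H_j)(x)/\vol(H_{[r/2]})(x)$ and $\hat{a}_j:=\lambda_j/\lambda_{[r/2]}$ with $\hat{a}_0=0$, so that $a_{[r/2]}=\hat{a}_{[r/2]}=1$. Telescoping the estimate $\lambda_{j-1}/\lambda_j<a_{j-1}/a_j\leq 1$ from $j=[r/2]$ downward yields $\hat{a}_j<a_j(x)\leq 1$ for $1\leq j\leq[r/2]-1$, and the boundary estimate yields $0=\hat{a}_0\leq a_0(x)<a_1(x)$. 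Passing from $\hat{a}$ to $a(x)$ only raises the smaller entries of $\hat{a}$ while the maximum $a_{[r/2]}=1$ is pinned, so the induced transformation $\hat{p}\mapsto p(x)$ is realized by a sequence of mass transfers from the top of the distribution toward the bottom; this gives the majorization $p(x)\prec\hat{p}$, and Schur-concavity of Shannon entropy yields $S(r,\varphi,\beta)(x)\geq S_{r,\beta}$. Equality forces $a_j(x)=\hat{a}_j$ for every $j$, which in view of the strict inner inequalities can occur only when that inner range is empty, namely $r\in\{2,3\}$, together with $a_0(x)=0$, i.e.\ $\varphi(x)=-\infty$.

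The main obstacle is the first stage: re-running the Dai-Li/Li-Mochizuki Cheng-Yau argument for the smooth approximations and then passing to the limit while preserving strictness. Pointwise convergence of $h_\epsilon$ only preserves weak inequalities, and recovering strictness requires either the strong maximum principle (for the inner estimates under non-flatness) or, in the boundary case at $r=2,3$, the mean value inequality applied near points of saturation, the latter being precisely what forces the $W^{1,2}_{loc}$ regularity hypothesis.
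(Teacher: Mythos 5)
Your three-stage plan reproduces the paper's architecture, and the first stage is essentially identical to the paper's Proposition \ref{r/2}: smooth the weight via Condition \ref{three}, run the Li--Mochizuki/Dai--Li Cheng--Yau induction on the smooth complete solutions $h_\epsilon$, pass to the pointwise limit to obtain the non-strict chain of inequalities, and then restore strictness. Two remarks on that stage. First, the paper does not invoke the classical strong maximum principle on the limit solution (whose regularity is limited); it runs the mean value inequality argument of Theorem \ref{MVI} uniformly for all the quantities $\sigma_j$, $\sigma_j^\prime$, the point being that $1-e^{\sigma_j}$ is automatically $W^{1,2}_{loc}$ for $j\geq 2$ since it does not involve $e^{\varphi}$, whereas $1-e^{\sigma_1}$ involves $H_0$ and hence needs the hypothesis $e^{\varphi}h_\refe^{-1}\in W^{1,2}_{loc}$; your attribution of that hypothesis to the boundary estimate is correct. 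Second, your claim that strictness of $H_0\otimes H_1^{-1}<1$ is ``automatic when $r\geq 4$ from the chain of strict inner estimates'' is false: the inner estimates control $H_{j-1}\otimes H_j^{-1}$ only for $j\geq 2$ and say nothing about $H_0$ versus $H_1$. The paper's Proposition \ref{r/2} still only asserts $H_0\otimes H_1^{-1}\leq 1$ without the extra hypothesis, for every $r$. What is true, and what the paper actually uses, is that for $r\geq 4$ the strict \emph{inner} inequalities already force the volume vector away from both the uniform distribution and the reference distribution $\hat p$, so neither the strict upper bound nor the equality analysis in the lower bound requires strictness of the boundary estimate; only for $r=2,3$, where the inner range is empty, does the boundary strictness carry the whole weight. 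You should correct that sentence, but the error does not propagate.

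For the entropy stage your route genuinely differs from the paper's. The paper proves Lemma \ref{PQ} by reparametrizing the entropy in the consecutive ratios $s_k=t_k/t_{k+1}$ and computing $\partial S/\partial s_k>0$ directly; you instead observe that the ratio dominance $Q_j/Q_{j+1}\leq P_j/P_{j+1}$ gives the cross-product inequalities $P_iQ_j\leq Q_iP_j$ for $i<j$, hence domination of the partial sums of the sorted vectors, hence the majorization $P\prec Q$, and conclude by Schur-concavity. This is a cleaner and more standard derivation of the same monotonicity statement, and it yields the same equality characterization (all consecutive ratios must agree, which forces $r\in\{2,3\}$ and $\vol(H_0)(x)=0$, i.e.\ $\varphi(x)=-\infty$). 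I would only ask you to make the majorization step honest by writing out the cross-product argument rather than appealing to ``mass transfers,'' and to say explicitly how the upper half of the chain $j>[r/2]$ is controlled (via reality of the complete solution, $H_j=H_{r-j}$) and how the sorted order of the $p_j$ depends on the sign of $\beta$ --- the paper is equally silent on both points, but your write-up should not inherit that silence.
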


\begin{theo}\label{main theorem b} {\it The following holds:
\begin{align*}
\lim_{r\to\infty} (S_{r,\beta}-\log r)=
\begin{cases}
-\frac{2\beta d_\beta}{c_\beta}+\log(c_\beta) \ &\text{if $\beta>-1$} \\
-\infty \ &\text{if $\beta\leq -1$},
\end{cases}
\end{align*}
where $c_\beta$ and $d_\beta$ are defined as follows:
\begin{align*}
&c_\beta\coloneqq \int_0^1s^\beta(1-s)^\beta ds, \\
&d_\beta\coloneqq \int_0^1s^\beta(1-s)^\beta \log s\ ds.
\end{align*}
}
\end{theo}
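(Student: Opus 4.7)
The plan is to compute $S_{r,\beta}$ asymptotically by passing to the normalized variable $s_j\coloneqq j/r$. Since $\lambda_j = j(r-j) = r^2 s_j(1-s_j)$, the factor $r^{2\beta}$ cancels in the definition of $\hat p_{j,\beta}$, so
\begin{align*}
\hat p_{j,\beta} = \frac{(s_j(1-s_j))^\beta}{\sum_{k=1}^{r-1}(s_k(1-s_k))^\beta}.
\end{align*}
The symmetry $s_{r-j} = 1-s_j$ gives $\hat p_{r-j,\beta} = \hat p_{j,\beta}$, and combined with $\log(s_j(1-s_j)) = \log s_j + \log(1-s_j)$ the entropy simplifies to
\begin{align*}
S_{r,\beta} = \log\!\left(\sum_{k=1}^{r-1}(s_k(1-s_k))^\beta\right) - 2\beta\,\frac{\sum_{j=1}^{r-1}(s_j(1-s_j))^\beta \log s_j}{\sum_{k=1}^{r-1}(s_k(1-s_k))^\beta}.
\end{align*}

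For $\beta > -1$, both $(s(1-s))^\beta$ and $(s(1-s))^\beta \log s$ are integrable on $[0,1]$, with integrals $c_\beta$ and $d_\beta$. I would verify the Riemann-sum convergences
\begin{align*}
\frac{1}{r}\sum_{j=1}^{r-1}(s_j(1-s_j))^\beta \to c_\beta, \qquad \frac{1}{r}\sum_{j=1}^{r-1}(s_j(1-s_j))^\beta \log s_j \to d_\beta,
\end{align*}
handling the endpoint singularity at $s=0$ by comparison with the standard partial-sum asymptotics $\sum_{j=1}^N j^\beta \sim N^{\beta+1}/(\beta+1)$ and $\sum_{j=1}^N j^\beta\log j \sim N^{\beta+1}\log N/(\beta+1)$, and using the $s\leftrightarrow 1-s$ symmetry at $s=1$. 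Substituting yields
\begin{align*}
S_{r,\beta} - \log r = -2\beta\,\frac{d_\beta + o(1)}{c_\beta + o(1)} + \log\!\bigl(c_\beta + o(1)\bigr) \longrightarrow -\frac{2\beta d_\beta}{c_\beta} + \log c_\beta.
\end{align*}

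For $\beta \leq -1$ the integrals $c_\beta,d_\beta$ diverge, and the goal becomes $\log r - S_{r,\beta}\to\infty$. I would apply the standard partition inequality
\begin{align*}
H(p) \leq H_2(\pi) + \pi\log|A| + (1-\pi)\log|B|,
\end{align*}
valid for any partition of the support into $A\sqcup B$ with $\pi = P(A)$ and $H_2$ the binary entropy. Taking $A = \{1,\dots,M\}\cup\{r{-}M,\dots,r{-}1\}$, this rearranges to
\begin{align*}
\log r - S_{r,\beta} \geq \pi_r\log\!\tfrac{r}{2M} + (1-\pi_r)\log\!\tfrac{r}{r-2M} - H_2(\pi_r).
\end{align*}
For $\beta < -1$, a discrete dominated-convergence argument (with dominating sequence $2^{-\beta}j^\beta$) shows $\sum_{j=1}^{r-1}\lambda_j^\beta \sim 2r^\beta \zeta(-\beta)$, so $\hat p_{j,\beta}\to j^\beta/(2\zeta(-\beta))$ for each fixed $j$; thus already for $M=1$ one has $\pi_r\to 1/\zeta(-\beta)>0$ and the right-hand side goes to $+\infty$. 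For the borderline $\beta=-1$, the partial-fraction identity $\sum_{j=1}^{r-1}\lambda_j^{-1} = 2H_{r-1}/r$ yields the asymptotic $\hat p_{j,-1} \sim 1/(2j\log r)$ for $j\ll r$; choosing $M=\lfloor\sqrt r\rfloor$ makes $\pi_r\to 1/2$ while $\log(r/(2M))\sim \tfrac12\log r$, so the right-hand side is at least $\tfrac14\log r - O(1)$.

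The main obstacle is the borderline case $\beta=-1$: the Riemann-sum method breaks down because $c_{-1}=+\infty$, yet the distribution does not concentrate on a bounded set either. The partition argument succeeds here only because one may tune the cutoff $M=M(r)$ to grow polynomially; the required estimate $\hat p_{j,-1}\sim 1/(2j\log r)$ and the companion bound $\pi_r(\lfloor\sqrt r\rfloor)\to 1/2$ both come from the classical asymptotic $H_{r-1} = \log r + \gamma + O(1/r)$. Everything else reduces to elementary sum manipulations and the symmetry $j\leftrightarrow r-j$.
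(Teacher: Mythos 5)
Your proposal is correct, and for the divergent case $\beta\leq -1$ it takes a genuinely different route from the paper. For $\beta>-1$ the two arguments essentially coincide: both normalize $\lambda_j=r^2 s_j(1-s_j)$ with $s_j=j/r$ and identify $\frac{1}{r}\sum(s_j(1-s_j))^\beta$ and the accompanying logarithmic sum as Riemann sums converging to $c_\beta$ and (after the $j\leftrightarrow r-j$ symmetry) $2\beta d_\beta$; your algebraic reduction $S_{r,\beta}=\log Z-2\beta\,Z^{-1}\sum a_j\log s_j$ is just a slightly cleaner packaging of the paper's identity $S_{r,\beta}=-h_{r,\beta}/(Z_{r,\beta}/r^{2\beta+1})+\log r+\log(Z_{r,\beta}/r^{2\beta+1})$, and both handle the integrable endpoint singularity for $-1<\beta<0$ by monotone comparison near $s=0,1$. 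For $\beta\leq-1$ the paper instead computes the leading asymptotics of both $Z_{r,\beta}/r^{2\beta+1}$ and $h_{r,\beta}$ (of order $\log r$ and $(\log r)^2$ for $\beta=-1$, and $r^{-\beta-1}$ and $r^{-\beta-1}\log r$ for $\beta<-1$) and concludes from the quotient; note that for $\beta<-1$ this conclusion silently uses that the ratio of the leading constants, which equals $-\beta$, exceeds $-\beta-1$, so that the negative term beats the positive $\log(Z_{r,\beta}/r^{2\beta+1})\sim(-\beta-1)\log r$. Your grouping-inequality argument avoids any such cancellation of leading constants: it only needs a lower bound on the probability mass $\pi_r$ carried by the $2M$ indices nearest the endpoints, obtained from $Z_{r,\beta}\sim 2\zeta(-\beta)r^\beta$ for $\beta<-1$ (where $M=1$ suffices and in fact shows $S_{r,\beta}$ stays bounded) and from the exact identity $\sum\lambda_j^{-1}=2H_{r-1}/r$ for the borderline $\beta=-1$ (where the tunable cutoff $M=\lfloor\sqrt{r}\rfloor$ gives $\pi_r\to 1/2$ and $\log r-S_{r,-1}\geq\frac{1}{4}\log r-O(1)$). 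All the asymptotics you invoke check out, so what your approach buys is robustness — you never need the precise constants in front of $h_{r,\beta}$ — at the cost of importing the standard entropy chain-rule bound.
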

\begin{que}How does $S(r,\varphi,\beta)(x)$ behave along the variation of $r$ at each point $x$? For example, does the limit $\lim_{r\to\infty}(S(r,\varphi,\beta)(x)-\log r)$ exist?
\end{que}
\begin{rem} Consider the case where the weight function $\varphi$ equals $\phi_q=\frac{1}{r}\log|q|_{h_\refe}$ for some $q\in H^0(K_X^r)$. Suppose that $X$ is a hyperbolic Riemann surface and $\sup_X|q|_{g_X}$ is finite for the complete hyperbolic K\"ahler metric $g_X$ (for equivalent conditions, see \cite[Theorem 4.8]{DL3}). 
For simplicity, we also ignore the dependence of the entropy on $\beta$, for example by setting $\beta = 1$.
In this case, by applying Dai-Li's result \cite[Theorem 4.8]{DL3} (see also \cite{BH1, LT1, Wan1} for the works for the lower rank cases), entropy can be uniformly bounded from above by $\log r-\delta$, where $\delta$ is a positive constant which depends only on $\sup_X|q|_{g_X}$. 
The author would like to thank Qiongling Li for pointing this out when he sent her the first draft of the present paper. While the above improvement to the upper bound of the entropy will not be discussed in depth in this paper, it will be addressed in a subsequent work \cite{Miy4}.\end{rem}

\begin{rem} The author is not aware of any research aimed at investigating the asymptotic behavior of the harmonic metric in some sense in the limit of the parameter $r$, which controls the size of the symmetric space to which the harmonic map goes, becoming infinitely large, but this seems like an interesting research direction. For research on the $r$-dependence of various quantities constructed from harmonic metrics, see, for example, \cite{DL1, DL2, Li0, LM3, Ma1}. In particular, by the estimate of Dai-Li \cite[Theorem 1.5]{DL2}, the sectional curvature of the image of the harmonic map associated with a cyclic Higgs bundle tends to vanish uniformly for all $q \in H^0(K_X^r)$ as the rank $r$ goes to infinity. 
\end{rem}

\section{Proof}\label{6}
\subsection{Proof of Theorem \ref{main theorem a}}
We first prove Theorem \ref{main theorem a}. The following proposition is an extension of the estimate established by Dai-Li \cite{DL1, DL2} and Li-Mochizuki \cite{LM1} to more general subharmonic weight functions:
\begin{prop}\label{r/2}
{\it Suppose that $\varphi$ is not identically $-\infty$ and that $e^{-\varphi}h_\refe$ is non-flat. Suppose that there exists a complete solution $h=(h_1,\dots, h_r)$ to equation (\ref{phi}) in Section {2.2} associated with $\varphi$ that satisfies Condition \ref{three} in Section \ref{2.2}. Let $H_1,\dots, H_r$ be Hermitian metrics on $K_X^{-1}\rightarrow X$ constructed from $h$ and $e^{-\varphi}h_\refe$ as in Section \ref{2.2}. Then the following estimate holds:
\begin{align}
&\frac{\lambda_{j-1}}{\lambda_j}=\frac{(j-1)(r-j+1)}{j(r-j)}<H_{j-1}\otimes H_j^{-1}<1 \ \text{for all $2\leq j\leq [r/2]$},\label{Gauss}\\
& H_r\otimes H_1^{-1}\leq 1. \label{Euler}
\end{align}
If, moreover, $e^{\varphi}h_\refe^{-1}$ belongs to $W^{1,2}_{loc}$, then $H_0\otimes H_1^{-1}<1$.
}
\end{prop}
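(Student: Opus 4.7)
The plan is to reduce to the Dai--Li / Li--Mochizuki bound (Theorem \ref{estimate}) by smooth approximation and then extract strictness via maximum-principle arguments, exploiting Condition \ref{three}.

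First, I would lift everything to the universal cover $\widetilde X$ and fix a mollification $(\widetilde\varphi_\epsilon)_{\epsilon>0}$ of $\widetilde\varphi$. By Condition \ref{three} there is a family of smooth complete solutions $h_\epsilon=(h_{1,\epsilon},\dots,h_{r,\epsilon})$ to (\ref{phi}) associated with $\widetilde\varphi_\epsilon$ with $h_\epsilon(x)\to \widetilde h(x)$ pointwise. For smooth weights, (\ref{cyc2}) is a standard smooth Toda-type system, and the Dai--Li \cite{DL1,DL2} and Li--Mochizuki \cite{LM1} argument applies: setting $\zeta_{j,\epsilon}\coloneqq\log(H_{j-1,\epsilon}\otimes H_{j,\epsilon}^{-1})$, one computes $\Delta \zeta_{j,\epsilon}$ from (\ref{cyc2}), obtains a differential inequality of the type required by the Cheng--Yau maximum principle (Theorem \ref{C-Ym}), and uses completeness of the K\"ahler metrics $H_{j,\epsilon}$ to conclude
\begin{align*}
\frac{\lambda_{j-1}}{\lambda_j}<\frac{H_{j-1,\epsilon}}{H_{j,\epsilon}}<1\quad(2\le j\le [r/2]),\qquad \frac{H_{r,\epsilon}}{H_{1,\epsilon}}<1.
\end{align*}
The only change from the published proof is to check that the Toda computation depends only on the form of (\ref{cyc2}) and not on $\widetilde\varphi$ being of the form $\tfrac{1}{r}\log|q|^2$, which is immediate.

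Letting $\epsilon\searrow 0$ and using pointwise convergence, the inequalities pass to the limit as weak bounds, which then descend from $\widetilde X$ to $X$, giving $\lambda_{j-1}/\lambda_j \leq H_{j-1}/H_j \leq 1$ and $H_r/H_1 \leq 1$. For the strict inequalities in (\ref{Gauss}) I would argue by a strong maximum principle: $\zeta_j\coloneqq\log(H_{j-1}\otimes H_j^{-1})$ is continuous and, by monotone convergence of $h_\epsilon$, inherits in a distributional sense the smooth differential inequality. If equality $H_{j-1}(x_0)=H_j(x_0)$ occurred at some $x_0$, the strong maximum principle forces $H_{j-1}\equiv H_j$ globally; feeding this back into the cyclic system (\ref{cyc2}) propagates equalities so that $H_1\equiv\cdots\equiv H_{r-1}$, and the remaining equation forces $e^{-\varphi}h_\refe$ to be flat, contradicting the hypothesis. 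Lower-bound strictness is symmetric: equality $H_{j-1}/H_j=\lambda_{j-1}/\lambda_j$ would propagate to $\varphi\equiv -\infty$, which is excluded.

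For the refined assertion $H_0\otimes H_1^{-1}<1$ under $e^{\varphi}h_\refe^{-1}\in W^{1,2}_{loc}$, the weak bound is already in hand, so the task is to exclude equality at a single point. The nonnegative function $u\coloneqq 1-H_0\otimes H_1^{-1}$ lies in $W^{1,2}_{loc}$ precisely because of the assumed regularity of $e^\varphi h_\refe^{-1}$, and (\ref{cyc2}) yields a differential inequality of the shape $\Delta u\le C u$. The mean value inequality (Theorem \ref{MVI}) then forces $u(x_0)>0$ unless $u\equiv 0$, and the latter is ruled out as in the previous step. I expect the main obstacles to be (a) making rigorous the propagation-of-equality argument for the non-smooth limit, which relies on the monotone convergence in Condition \ref{three} and a careful distributional form of the strong maximum principle, and (b) setting up the differential inequality for $u$ in the low-rank cases $r=2,3$, where the $W^{1,2}_{loc}$ hypothesis is the decisive regularity input enabling Theorem \ref{MVI}.
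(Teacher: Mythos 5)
Your architecture coincides with the paper's: lift to the universal cover, invoke Condition \ref{three} to get smooth complete solutions $h_\epsilon$, run the Dai--Li/Li--Mochizuki chain of Cheng--Yau arguments on the two families $\sigma_j=\log(H_{j-1}\otimes H_j^{-1})$ and $\sigma_j'=\log(H_j^{-1}\otimes H_{j+1})$ for each smooth weight $\varphi_\epsilon$ (the only new input being that semipositivity of the curvature of $e^{-\varphi_\epsilon}h_\refe$ turns the first equation of the chain into an inequality pointing the right way), and let $\epsilon\searrow 0$ to obtain the non-strict bounds. The one place you diverge is the recovery of strictness in (\ref{Gauss}) after the limit. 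You propose a distributional strong maximum principle plus propagation of equality through the cyclic system, and you correctly flag this as your main unresolved obstacle: for a limit that is merely continuous and satisfies the differential inequality only weakly, the classical strong maximum principle is not available, and "a careful distributional form" is exactly the missing ingredient. The paper closes this gap with the tool you already deploy for $H_0\otimes H_1^{-1}<1$: the Choi--Treibergs mean value inequality (Theorem \ref{MVI}) is applied uniformly to $u=1-e^{\sigma_j}$ for every $j$ (and to the analogous quantities built from the $\sigma_j'$), showing that the zero set of $u$ is open, hence empty unless $u\equiv 0$, and the global-equality case is excluded by the flatness/$\varphi\equiv-\infty$ propagation exactly as in your sketch. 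The reason no extra hypothesis is needed for $j\geq 2$ is that those ratios involve only components of the complete solution, which have enough regularity (cf. Theorem \ref{c2}) to lie in $W^{1,2}_{loc}$ automatically; only $\sigma_1$ involves $e^{r\varphi}$ through $H_0$, which is why the hypothesis $e^{\varphi}h_\refe^{-1}\in W^{1,2}_{loc}$ appears precisely for that case. So your obstacle (a) is not a separate problem from your obstacle (b): both are resolved by the same open-and-closed argument, and your proof becomes complete once you replace the strong maximum principle step by it. One further small imprecision: Condition \ref{three} guarantees only pointwise convergence $h_\epsilon(x)\to\widetilde h(x)$, not monotone convergence (that is the content of Theorem \ref{c3} in a special setting), but pointwise convergence is all that is used to pass the inequalities to the limit.
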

We first prove the above Proposition \ref{r/2} for the case where $\varphi$ is smooth by following \cite[Theorem 4.4]{LM1}, and then we prove the assertion by using Condition \ref{three} and the mean value inequality (see Section \ref{CYMMVI}). We set $n\coloneqq [r/2]$ and $\delta\coloneqq r-2n$. Let $H_1,\dots, H_r$ be the Hermitian metrics on $K_X^{-1}\rightarrow X$ constructed by using a complete solution $h=(h_1,\dots, h_r)$ to equation (\ref{phi}) and $e^{-\varphi}h_\refe$. We define $\sigma_1,\dots, \sigma_n$ as follows:
\begin{align*}
\sigma_j\coloneqq \log(H_{j-1}\otimes H_j^{-1}) \ \text{for $j=1,2,\dots, n$.}
\end{align*}
Let $r\geq 4$. Then we also define $\sigma^\prime_1,\dots, \sigma_{n-1}^\prime$ as follows:
\begin{align*}
\sigma_j^\prime\coloneqq \log(H_j^{-1}\otimes H_{j+1}) \ \text{for $j=1,\dots, n-1$}.
\end{align*}
By direct calculation, we can check that the following holds:
\begin{align}
&\inum\partial\bar{\partial}\sigma_1\geq 3\vol(H_0)-4\vol(H_1)+\vol(H_2), \label{sigma} \\
&\inum \partial\bar{\partial}\sigma_j =3\vol(H_{j-1})-3\vol(H_j)-\vol(H_{j-2})+\vol(H_{j+1})\ \text{for $j=2,\dots, n-1$}, \label{sigma2}\\
&\inum \partial\bar{\partial} \sigma_n=(4-\delta)\vol(H_{n-1})-(3-\delta)\vol(H_n)-\vol(H_{n-2}), \label{sigma3}\\
&\inum \partial \bar{\partial} \sigma^\prime_j=-3\vol(H_j)+3\vol(H_{j+1})+\vol(H_{j-1})-\vol(H_{j+2})\ \text{for $j=1,\dots, n-2$}, \label{sigma4}\\
&\inum \partial \bar{\partial} \sigma^\prime_{n-1}=(3-\delta)\vol(H_n)-(4-\delta)\vol(H_{n-1})+\vol(H_{n-2}). \label{sigma5}
\end{align}
Note that we have used in inequality (\ref{sigma}) the assumption that the metric $e^{-\varphi}h_\refe$ is semipositive. We also note that when the regularity of $\varphi$ is not high enough, inequality (\ref{sigma}), equality (\ref{sigma2}) for $j=2$, and equality (\ref{sigma4}) hold only in the weak sense. %From (\ref{sigma}), we also have the following:
%\begin{align}
%\inum\partial \bar{\partial} e^{\sigma_1}\geq e^{\sigma_1}(3\vol(H_0)-4\vol(H_1)+\vol(H_2)) \label{esigma}.
%\end{align}
From (\ref{sigma}), (\ref{sigma2}), and (\ref{sigma3}), we obtain the following:
\begin{align}
&\inum\Lambda_{H_1}\partial \bar{\partial} \sigma_1\geq 3e^{\sigma_1}-4+e^{-\sigma_2}, \label{Hsigma}\\
&\inum \Lambda_{H_j}\partial\bar{\partial}\sigma_j =3e^{\sigma_j}-3-e^{\sigma_j+\sigma_{j-1}}+e^{-\sigma_{j+1}}\ \text{for $j=2,\dots, n-1$}, \label{Hsigma2} \\
&\inum \Lambda_{H_n}\partial\bar{\partial} \sigma_n=(4-\delta)e^{\sigma_n}-(3-\delta)-e^{\sigma_{n-1}+\sigma_n}. \label{Hsigma3}
\end{align}
From (\ref{sigma4}) and (\ref{sigma5}), we have
\begin{align}
&\inum \Lambda_{H_1}\partial \bar{\partial} \sigma^\prime_1\geq -3+3e^{\sigma_1^\prime}-e^{\sigma_1^\prime+\sigma_2^\prime}, \label{Hs}\\
&\inum \Lambda_{H_j}\partial \bar{\partial} \sigma^\prime_j=-3+3e^{\sigma_j^\prime}+e^{-\sigma_{j-1}^\prime}-e^{\sigma_j^\prime+\sigma_{j+1}^\prime}\ \text{$j=2,\dots, n-2$}, \label{Hsigma4}\\
&\inum \Lambda_{H_{n-1}}\partial \bar{\partial} \sigma^\prime_{n-1}=(3-\delta)e^{\sigma_{n-1}^\prime}-(4-\delta)+e^{-\sigma_{n-2}^\prime}. \label{Hsigma5}
\end{align}
We introduce the following notation:
\begin{align*}
&M_j\coloneqq \sup_Xe^{\sigma_j}\ \text{for $j=1,\dots, n$}, \\
&M_j^\prime\coloneqq \sup_Xe^{\sigma_j^\prime} \ \text{for $j=1,\dots, n-1$}, \\
&B_1\coloneqq 2(1-M_1^{-1}), \\
&B_j\coloneqq 2-M_j^{-1}-M_{j-1} \ \text{for $j=2,\dots, n$}, \\
&B_{n+1}\coloneqq (2-\delta)(1-M_n), \\
&B_0^\prime\coloneqq 2-M_1^\prime, \\
&B_j^\prime\coloneqq 2-M_j^{\prime-1}-M_{j+1}^\prime \ \text{for $j=1,\dots, n-2$}, \\
&B_{n-1}^\prime\coloneqq (2-\delta)(1-M_{n-1}^{\prime-1}).
\end{align*}
Following \cite[Section 4]{LM1}, with a slight variation of the argument, we first show the following:
\begin{lemm}\label{MMBB}{\it 
Suppose that $\varphi$ is smooth. Then the following holds:
\begin{align}
&M_j \leq \frac{4j}{2j+1}-\frac{2j-1}{2j+1}M_{j+1}^{-1}\ \text{for $j=1,\dots, n-1$}, \label{M}\\
&M_j B_j\leq B_{j+1}\ \text{for $j=1,\dots, n$}, \label{MB}\\
&M_{n-j}^\prime \leq \frac{4j-(2j-1)\delta}{2j+1-j\delta}-\frac{2j-1-(j-1)\delta}{2j+1-j\delta}M_{n-(j+1)}^{\prime-1} \ \text{for $j=1,\dots, n-2$}, \label{Mp}\\
&M_{n-j}^\prime B_{n-j}^\prime \leq B_{n-(j+1)}^\prime\ \text{for all $j=1,\dots, n-1$}. \label{MpBp}
\end{align}
}
\end{lemm}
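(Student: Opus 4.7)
The plan is to split Lemma \ref{MMBB} into two layers: first establish the ``one-step'' inequalities (\ref{MB}) and (\ref{MpBp}) by applying the Cheng-Yau maximum principle (Theorem \ref{C-Ym}) to $\sigma_1,\dots,\sigma_n$ and $\sigma_1^\prime,\dots,\sigma_{n-1}^\prime$ individually, and then extract the stronger recursive bounds (\ref{M}) and (\ref{Mp}) from them by elementary algebraic induction. Since $h$ is complete by hypothesis, every $H_j$ induces a complete K\"ahler metric on $X$, and the extended Hitchin equation (\ref{cyc2}) gives Gauss curvature $-2 + e^{\sigma_j} + e^{-\sigma_{j+1}} \geq -2$; when $\varphi$ is smooth this is exactly the Ricci lower bound required by Cheng-Yau, so the maximum principle is available at every stage.

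For (\ref{MB}), I would apply Theorem \ref{C-Ym} to $\sigma_j$ with respect to $H_j$. The case $j = 1$ uses (\ref{Hsigma}) together with $e^{-\sigma_2} \geq M_2^{-1}$, yielding $\Delta_{H_1}\sigma_1 \geq 3e^{\sigma_1} - 4 + M_2^{-1}$ and hence $3 M_1 \leq 4 - M_2^{-1}$, which rearranges exactly to $M_1 B_1 \leq B_2$. For $2 \leq j \leq n-1$, combining (\ref{Hsigma2}) with $e^{\sigma_{j-1}} \leq M_{j-1}$ and $e^{-\sigma_{j+1}} \geq M_{j+1}^{-1}$ gives $\Delta_{H_j}\sigma_j \geq (3 - M_{j-1}) e^{\sigma_j} - 3 + M_{j+1}^{-1}$, and Cheng-Yau returns $M_j(3 - M_{j-1}) \leq 3 - M_{j+1}^{-1}$, equivalent to $M_j B_j \leq B_{j+1}$; the endpoint $j = n$ is handled in the same manner using (\ref{Hsigma3}). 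Finiteness of each $M_j$ is produced along the way: the $j = 1$ step forces $M_1 \leq 4/3 < 3$, so the coefficient $3 - M_{j-1}$ at the next step is strictly positive and Cheng-Yau continues to supply a finite upper bound. The proof of (\ref{MpBp}) is entirely parallel, running downward from the top index $n-1$ and using (\ref{Hs}), (\ref{Hsigma4}), (\ref{Hsigma5}) in place of (\ref{Hsigma}), (\ref{Hsigma2}), (\ref{Hsigma3}).

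Finally, (\ref{M}) follows from (\ref{MB}) by forward induction on $j$. The base case $j = 1$ is just (\ref{MB}) at $j = 1$ rewritten. Assuming $M_{j-1} \leq \frac{4(j-1)}{2j-1} - \frac{2j-3}{2j-1} M_j^{-1}$, the resulting lower bound $3 - M_{j-1} \geq \frac{2j+1}{2j-1} + \frac{2j-3}{2j-1} M_j^{-1}$ combines with $M_j(3 - M_{j-1}) \leq 3 - M_{j+1}^{-1}$ to yield (\ref{M}) at index $j$ after routine arithmetic. The derivation of (\ref{Mp}) from (\ref{MpBp}) is the strictly parallel downward induction in the index, starting from the boundary case produced by Cheng-Yau applied to $\sigma_{n-1}^\prime$ via (\ref{Hsigma5}). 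The principal point to watch — really the crux of the argument — is preserving positivity of the leading coefficients $3 - M_{j-1}$ and $3 - M_{j+1}^\prime$ at every step, so that Cheng-Yau keeps producing useful bounds; this is automatic from the inductive inequalities since $\frac{4(j-1)}{2j-1} < 2 < 3$ (and similarly for the primed chain). The extension to non-smooth $\varphi$ via Condition \ref{three} and the mean value inequality is not part of the lemma itself, and is deferred to the proof of Proposition \ref{r/2}.
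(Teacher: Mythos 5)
Your argument is correct and is essentially the paper's own proof: the same Cheng--Yau applications to (\ref{Hsigma})--(\ref{Hsigma5}) yield $M_j(3-M_{j-1})\le 3-M_{j+1}^{-1}$ and its primed analogue (i.e.\ (\ref{MB}) and (\ref{MpBp})), and the same elementary algebra converts these into the recursive bounds (\ref{M}) and (\ref{Mp}). The one caveat is that your opening description of a ``two-layer'' split --- all of (\ref{MB}) first, then (\ref{M}) extracted afterwards --- cannot be run literally in that order, since applying Cheng--Yau at index $j$ already requires knowing $3-M_{j-1}>0$, which comes only from (\ref{M}) at index $j-1$ (the crude bound $M_j<3/(3-M_{j-1})$ alone would eventually exceed $3$); but your actual execution correctly interleaves the two chains exactly as the paper does and explicitly identifies this positivity as the crux, so the discrepancy is purely one of presentation.
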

\begin{proof}
For each Hermitian metric $H$ on $K_X^{-1}\rightarrow X$, we denote by $\Lambda_{H}$ the contraction operator of the K\"ahler metric induced by $H$. We first prove (\ref{M}) and (\ref{MB}) by induction on $j$. From (\ref{Hsigma}), we have 
\begin{align*}
\inum\Lambda_{H_1}\partial \bar{\partial} \sigma_1\geq 3e^{\sigma_1}-4+M_2^{-1}.
\end{align*}
From the Cheng-Yau maximum principle, we have
\begin{align*}
0\geq 3M_1-4+M_2^{-1}.
\end{align*}
This implies
\begin{align*}
M_1\leq \frac{4}{3}-\frac{1}{3}M_2^{-1} \ \text{and} \ M_1B_1\leq B_2.
\end{align*}
Therefore we obtain (\ref{M}) and (\ref{MB}) for the case where $j=1$.
Suppose that (\ref{M}) holds for some $j=k$, where $1\leq k<n-1$. We show that this implies inequality (\ref{MB}) for $j=k+1$. From (\ref{Hsigma2}), we have 
\begin{align*}
\inum \Lambda_{H_{k+1}}\partial\bar{\partial}\sigma_{k+1} \geq (3-M_k)e^{\sigma_{k+1}}-3+M_{k+2}^{-1}.
\end{align*}
Since we have (\ref{M}) for the case where $j=k$, it holds that $3-M_k>0$. Then from the Cheng-Yau maximum principle, we have
\begin{align*}
0\geq (3-M_k)M_{k+1}-3+M_{k+2}^{-1}.
\end{align*}
This implies 
\begin{align*}
M_{k+1}B_{k+1} \leq B_{k+2}.
\end{align*}
Suppose that (\ref{M}) holds for $j=n-1$. We show that this implies inequality (\ref{MB}) for $j=n$. From (\ref{Hsigma3}), we have
\begin{align*}
\inum \Lambda_{H_n}\partial\bar{\partial} \sigma_n\geq (4-\delta-M_{n-1})e^{\sigma_n}-(3-\delta). 
\end{align*}
Since we have (\ref{M}) for the case where $j=n-1$, it holds that $4-\delta-M_{n-1}>0$. Then from the Cheng-Yau maximum principle, we have 
\begin{align*}
0\geq (4-\delta-M_{n-1})M_n-(3-\delta).
\end{align*}
This implies (\ref{MB}) for the case where $j=n$. Suppose that (\ref{MB}) holds for some $j=k$, where $2\leq k\leq n-1$ and that inequality (\ref{M}) holds for $j=k-1$. We show that this implies (\ref{M}) holds for $j=k$, which completes the proof of (\ref{M}) and (\ref{MB}). From (\ref{MB}) for $j=k$, we have
\begin{align}
M_k(3-M_{k-1})\leq 3-M_{k+1}^{-1}. \label{mk}
\end{align}
By combining (\ref{mk}) with (\ref{M}) for $j=k-1$, we have
\begin{align*}
M_k\left(3-\frac{4k-4}{2k-1}+\frac{2k-3}{2k-1}M_k^{-1}\right)\leq 3-M_{k+1}^{-1}.
\end{align*}
This implies
\begin{align*}
M_k \leq \frac{4k}{2k+1}-\frac{2k-1}{2k+1}M_{k+1}^{-1}.
\end{align*} 
By induction on $j$, we obtain (\ref{M}) and (\ref{MB}) from the above. We next prove (\ref{Mp}) and (\ref{MpBp}). From (\ref{Hsigma5}), we have 
\begin{align*}
\inum \Lambda_{H_{n-1}}\partial \bar{\partial} \sigma^\prime_{n-1}\geq (3-\delta)e^{\sigma_{n-1}^\prime}-(4-\delta)+M_{n-2}^{\prime-1}. 
\end{align*}
Then from the Cheng-Yau maximum principle, we have
\begin{align*}
0\geq (3-\delta)M_{n-1}^\prime-(4-\delta)+M_{n-2}^{\prime-1}.
\end{align*}
This implies
\begin{align*}
M_{n-1}^\prime\leq \frac{4-\delta}{3-\delta}-\frac{M_{n-2}^{\prime-1}}{3-\delta} \ \text{and} \
M_{n-1}^\prime B_{n-1}^\prime\leq B_{n-2}^\prime,
\end{align*}
and thus we have proved (\ref{Mp}) and (\ref{MpBp}) for the case where $j=1$. Suppose that (\ref{Mp}) holds for some $j=k$, where $1\leq k<n-2$. We show that this implies (\ref{MpBp}) for $j=k+1$. From (\ref{Hsigma4}), we have
\begin{align*}
\inum \Lambda_{H_{n-(k+1)}}\partial \bar{\partial} \sigma^\prime_{n-(k+1)}\geq -3+3e^{\sigma_{n-(k+1)}^\prime}+M_{n-(k+2)}^{\prime-1}-M_{n-k}^\prime e^{\sigma_{n-(k+1)}^\prime}
\end{align*}
Since we have (\ref{Mp}) for $j=k$, it holds that $3-M_{n-k}^\prime>0$. Then by the Cheng-Yau maximum principle, we have
\begin{align*}
0\geq -3+3M_{n-(k+1)}^\prime+M_{n-(k+2)}^{\prime-1}-M_{n-k}^\prime M_{n-(k+1)}^\prime.
\end{align*}
This implies $M_{n-(k+1)}^\prime B_{n-(k+1)}^\prime\leq B_{n-(k+2)}^\prime$. Suppose that (\ref{Mp}) holds for $j=n-2$. We show that this implies (\ref{MpBp}) for $j=n-1$. From (\ref{Hs}), we have 
\begin{align*}
\inum \Lambda_{H_1}\partial \bar{\partial} \sigma^\prime_1\geq -3+3e^{\sigma_1^\prime}-e^{\sigma_1^\prime}M_2^\prime.
\end{align*}
Since we have (\ref{Mp}) for $j=n-2$, it holds that $3-M_2^\prime>0$. Then from the Cheng-Yau maximum principle, we have
\begin{align*}
0\geq -3+3M_1^\prime -M_1^\prime M_2^\prime.
\end{align*}
This implies $M_1^\prime B_1^\prime\leq B_0^\prime$. Suppose that (\ref{MpBp}) holds for some $j=k$, where $2\leq k\leq n-2$, and (\ref{Mp}) holds for $j=k-1$. We show that this implies (\ref{Mp}) for $j=k$, which completes the proof of (\ref{Mp}) and (\ref{MpBp}). From (\ref{MpBp}) for $j=k$, we have
\begin{align*}
M_{n-k}^\prime(3-M_{n-(k-1)}^\prime)\leq {3-M_{n-(k+1)}^{\prime-1}}.
\end{align*}
Since we have (\ref{Mp}) for $j=k-1$, it holds that
\begin{align*}
M_{n-k}^\prime\left(3-\frac{4(k-1)-(2k-3)\delta}{2k-1-(k-1)\delta}+\frac{2k-3-(k-2)\delta}{2k-1-(k-1)\delta}M_{n-k}^{\prime-1}\right)\leq {3-M_{n-(k+1)}^{\prime-1}}.
\end{align*}
This implies
\begin{align*}
M_{n-k}^\prime \leq \frac{4k-(2k-1)\delta}{2k+1-k\delta}-\frac{2k-1-(k-1)\delta}{2k+1-k\delta}M_{n-(k+1)}^{\prime-1}.
\end{align*}
Therefore we obtain (\ref{Mp}) for $j=k$. This implies (\ref{Mp}) and (\ref{MpBp}). 
\end{proof}

\begin{lemm}
{\it Suppose that $e^\varphi$ is smooth. Then the following holds:
\begin{align}
&M_j-1\leq \frac{2j-1}{2} B_{j+1},\ \text{for $j=1,\dots, n$}\label{mj1}\\
&M_{n-j}^\prime-1\leq \frac{2j-1-(j-1)\delta}{2-\delta}B_{n-(j+1)}^\prime\ \text{for $j=1,\dots, n-1$}.\label{mjp1}
\end{align}
}
\end{lemm}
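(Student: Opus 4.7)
The plan is to derive both (\ref{mj1}) and (\ref{mjp1}) as purely algebraic consequences of the estimates (\ref{M})--(\ref{MpBp}) already established in the preceding Lemma \ref{MMBB}; no further use of the Cheng--Yau maximum principle or of the auxiliary PDEs (\ref{sigma})--(\ref{sigma5}) is required at this stage, and the smoothness of $\varphi$ enters only through those earlier inputs.

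For (\ref{mj1}) the base case $j=1$ is immediate: the special shape $B_1=2(1-M_1^{-1})$ produces the identity $\frac{1}{2}M_1 B_1=M_1-1$, and then (\ref{MB}) at $j=1$ supplies $M_1-1\leq \frac{1}{2}B_2$. For $2\leq j\leq n$, I would first bound $\frac{2j-1}{2}B_{j+1}$ from below by $\frac{2j-1}{2}M_j B_j$ using (\ref{MB}) at $j$, and then verify the stronger inequality $M_j-1\leq \frac{2j-1}{2}M_j B_j$. Substituting $B_j=2-M_j^{-1}-M_{j-1}$, multiplying out, and isolating $M_{j-1}$ converts this into the assertion $M_{j-1}\leq \frac{4(j-1)}{2j-1}-\frac{2j-3}{2j-1}M_j^{-1}$, which is exactly (\ref{M}) at index $j-1$. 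Since Lemma \ref{MMBB} provides (\ref{M}) for $j-1\in\{1,\dots,n-1\}$ and (\ref{MB}) for $j\in\{1,\dots,n\}$, the argument covers the whole range $j=2,\dots,n$, and the nonstandard form $B_{n+1}=(2-\delta)(1-M_n)$ at the endpoint $j=n$ enters only on the right-hand side of the final bound.

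The proof of (\ref{mjp1}) follows the same template with the primed quantities. At $j=1$ the special shape $B_{n-1}^\prime=(2-\delta)(1-M_{n-1}^{\prime-1})$ gives the identity $M_{n-1}^\prime B_{n-1}^\prime=(2-\delta)(M_{n-1}^\prime-1)$, so (\ref{MpBp}) at $j=1$ immediately yields the required inequality. For $2\leq j\leq n-1$ I would again use (\ref{MpBp}) at $j$ to reduce the target to $M_{n-j}^\prime-1\leq C\, M_{n-j}^\prime B_{n-j}^\prime$ with $C=\frac{2j-1-(j-1)\delta}{2-\delta}$; here $B_{n-j}^\prime$ can be expanded by the generic formula since $n-j\in\{1,\dots,n-2\}$. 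After dividing through by the positive factor $C\, M_{n-j}^\prime$, the resulting inequality coincides exactly with (\ref{Mp}) at index $j-1$, provided one verifies the algebraic identities $2-\frac{1}{C}=\frac{4(j-1)-(2j-3)\delta}{2j-1-(j-1)\delta}$ and $\frac{C-1}{C}=\frac{2j-3-(j-2)\delta}{2j-1-(j-1)\delta}$, which match the two coefficients of (\ref{Mp}) at $j-1$ on the nose.

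The main obstacle is purely bookkeeping: the constants $\frac{2j-1}{2}$ and $\frac{2j-1-(j-1)\delta}{2-\delta}$ must be matched against the coefficients in (\ref{M}) and (\ref{Mp}), and one has to keep track of which $B_j$ and $B_j^\prime$ are given by the generic formula and which by a boundary formula. Since all $M_j$'s and $M_j^\prime$'s are positive and the two coefficients above are positive for $\delta\in\{0,1\}$ and $j\geq 1$, every multiplication or division by them preserves the direction of the inequalities, and once the indices and signs are handled carefully the entire lemma becomes a direct algebraic restatement of Lemma \ref{MMBB}, with no further analytic input required.
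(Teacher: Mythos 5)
Your proof is correct. The paper establishes the same two inequalities by induction on $j$: for the unprimed case the inductive step writes $1-M_{k+1}^{-1}=B_{k+1}-1+M_k$ from the definition of $B_{k+1}$, inserts the induction hypothesis $M_k-1\leq \frac{2k-1}{2}B_{k+1}$, and then applies (\ref{MB}) at $j=k+1$ (and analogously for the primed case); it never invokes (\ref{M}) or (\ref{Mp}) in this proof. You instead make each case $j\geq 2$ a direct, non-inductive consequence of (\ref{M}) at index $j-1$ (resp.\ (\ref{Mp}) at index $j-1$) combined with (\ref{MB}) (resp.\ (\ref{MpBp})) at index $j$. I verified the two reductions: the inequality $M_j-1\leq \frac{2j-1}{2}M_jB_j$ with $B_j=2-M_j^{-1}-M_{j-1}$ is indeed equivalent, after dividing by $(2j-1)M_j>0$, to $M_{j-1}\leq \frac{4(j-1)}{2j-1}-\frac{2j-3}{2j-1}M_j^{-1}$, which is (\ref{M}) at $j-1$; and with $C=\frac{2j-1-(j-1)\delta}{2-\delta}>0$ the identities $2-\frac{1}{C}=\frac{4(j-1)-(2j-3)\delta}{2j-1-(j-1)\delta}$ and $\frac{C-1}{C}=\frac{2j-3-(j-2)\delta}{2j-1-(j-1)\delta}$ hold, so the primed reduction lands exactly on (\ref{Mp}) at $j-1$. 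Your index ranges are also right: (\ref{M}) at $j-1$ is available for $j=2,\dots,n$ and (\ref{Mp}) at $j-1$ for $j=2,\dots,n-1$, with the boundary expressions $B_{n+1}$ and $B_{n-1}^\prime$ appearing only where they should. Since (\ref{M}) at $j-1$ is itself extracted in Lemma \ref{MMBB} from (\ref{MB}) at $j-1$ and earlier data, the two arguments are algebraically equivalent reorganizations of the same content; your version simply trades the paper's induction for a direct appeal to the already-proved families (\ref{M}) and (\ref{Mp}), at the cost of the small coefficient bookkeeping you carried out.
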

\begin{proof}
We provide proofs of (\ref{mj1}) and (\ref{mjp1}) inductively with respect to $j$, which are identical to those in \cite[Proof of Claim 4.2 (3)]{LM1}. We first consider the case where $j=1$. From the definition of $B_1$ and $B_1^\prime$, we have the following:
\begin{align*}
M_1-1&=M_1(1-M_1^{-1}) \\
&=\frac{1}{2}M_1B_1 \\
&\leq \frac{1}{2}B_2, \\
M_{n-1}^\prime-1&=M_{n-1}^\prime(1-M_{n-1}^{\prime-1}) \\
&=\frac{1}{2-\delta}M_{n-1}^\prime B_{n-1}^\prime \\
&\leq \frac{1}{2-\delta} B_{n-2}^\prime,
\end{align*}
and thus we have (\ref{mj1}) and (\ref{mjp1}) for the case where $j=1$.
We next suppose that (\ref{mj1}) holds for $j=k$, where $1\leq k\leq n-1$, and that (\ref{mjp1}) holds for $j=k$, where $1\leq k\leq n-2$. We show that (\ref{mj1}) holds for $j=k+1$. By the definition of $B_{k+1}$, we have 
\begin{align*}
M_{k+1}-1&=M_{k+1}(1-M_{k+1}^{-1}) \\
&=M_{k+1}(B_{k+1}-1+M_k) \\
&\leq M_{k+1}(B_{k+1}+\frac{2k-1}{2}B_{k+1}) \\
&=\frac{2k+1}{2}M_{k+1}B_{k+1} \\
&\leq \frac{2k+1}{2}B_{k+2}, \\
M_{n-(k+1)}^\prime -1&=M_{n-(k+1)}^\prime(1-M_{n-(k+1)}^{\prime-1}) \\
&=M_{n-(k+1)}^\prime(B_{n-(k+1)}^\prime-1+M_{n-k}^\prime) \\
&\leq M_{n-(k+1)}^\prime(B_{n-(k+1)}^\prime+\frac{2k-1-(k-1)\delta}{2-\delta}B_{n-(k+1)}) \\
&=\frac{2k+1-k\delta}{2-\delta}M_{n-(k+1)}^\prime B_{n-(k+1)}^\prime \\
&\leq \frac{2k+1-k\delta}{2-\delta}B_{n-(k+2)}^\prime.
\end{align*}
This implies (\ref{mj1}) and (\ref{mjp1}) for $j=k+1$, and thus we have completed the proofs of (\ref{mj1}) and (\ref{mjp1}). 
\end{proof}
\begin{lemm}\label{26}{\it Suppose that $\varphi$ is smooth. Then the following holds:
\begin{align}
&M_j\leq 1\ \text{for $j=1,\dots, n$}, \label{MJ1} \\
&M_j^\prime\leq \frac{\lambda_{j+1}}{\lambda_j} \ \text{for $j=1,\dots, n-1$}. \label{lambdajj}
\end{align}
}
\end{lemm}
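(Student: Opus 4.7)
The plan is to combine the already-established relations from the previous two lemmas — namely the chain inequalities $(M)$, $(MB)$, $(Mp)$, $(MpBp)$ and the bounds $(m_j 1)$, $(m_j^\prime 1)$ — and then run two downward/upward inductions, bootstrapping each of the desired estimates from a single boundary case where everything collapses to a closed scalar inequality.

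First I would handle \eqref{MJ1}. The base case is $j=n$. Here the key observation is that $B_{n+1}=(2-\delta)(1-M_n)$ depends only on $M_n$ itself, so combining $(MB)$ for $j=n$ with $(m_j 1)$ for $j=n$ yields
\[
M_n-1 \;\leq\; \tfrac{2n-1}{2}\,B_{n+1} \;=\; -\tfrac{(2n-1)(2-\delta)}{2}(M_n-1),
\]
which forces $(M_n-1)\bigl(1+\tfrac{(2n-1)(2-\delta)}{2}\bigr)\leq 0$ and hence $M_n\leq 1$. For the inductive step, I would use $(M)$ directly: if $M_{j+1}\leq 1$ then $M_{j+1}^{-1}\geq 1$, and $(M)$ gives $M_j\leq \frac{4j}{2j+1}-\frac{2j-1}{2j+1}=1$. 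Running this from $j=n-1$ down to $j=1$ completes the proof of \eqref{MJ1}.

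For \eqref{lambdajj} the structure is parallel but mirrored. The base case is $j=1$, where $B_0^\prime=2-M_1^\prime$ again depends only on the quantity being estimated. Applying $(m_j^\prime 1)$ with $j=n-1$ and solving the resulting closed inequality for $M_1^\prime$ gives
\[
M_1^\prime \;\leq\; \frac{4n-4-(2n-3)\delta}{2n-1-(n-1)\delta},
\]
and a direct check separately for $\delta=0,1$ (equivalently $r=2n,2n+1$) verifies that the right-hand side coincides with $\lambda_2/\lambda_1=2(r-2)/(r-1)$. For the inductive step, assuming $M_{k-1}^\prime\leq \lambda_k/\lambda_{k-1}$, I would substitute $M_{k-1}^{\prime-1}\geq \lambda_{k-1}/\lambda_k$ into the bound $(Mp)$ for the index $k$ and verify by algebraic simplification that the resulting expression equals $\lambda_{k+1}/\lambda_k$. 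This propagates the estimate up to $k=n-1$.

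The only non-trivial piece is the algebraic identity in the inductive step of the second part: one must check that, for both parities of $r$,
\[
\frac{4(n-k)-(2(n-k)-1)\delta}{2(n-k)+1-(n-k)\delta}\;-\;\frac{2(n-k)-1-(n-k-1)\delta}{2(n-k)+1-(n-k)\delta}\cdot\frac{\lambda_{k-1}}{\lambda_k}\;=\;\frac{\lambda_{k+1}}{\lambda_k}.
\]
This is a routine but slightly tedious manipulation; a convenient way is to clear denominators, substitute $\lambda_j=j(r-j)$, and observe that both sides become polynomial identities in $k$ and $r$ which reduce to a perfect-square factorization (as in the sample computation $(4n-8)(4n-4)-(2n-5)(2n-1)=3(2n-3)^2$). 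I expect this algebraic step to be the only real obstacle; once it is in place, both \eqref{MJ1} and \eqref{lambdajj} follow by the inductions described above.
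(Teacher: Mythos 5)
Your proof is correct and follows the same overall strategy as the paper: anchor each estimate at the index where the relevant $B$-quantity closes on itself ($B_{n+1}$ for \eqref{MJ1}, $B_0^\prime$ for \eqref{lambdajj}), then induct. The only differences are in which previously established inequality powers the inductive step. For \eqref{MJ1} the paper iterates \eqref{mj1} (getting $M_{k-1}-1\leq \frac{2k-3}{2}(2-M_k^{-1}-M_{k-1})\leq\frac{2k-3}{2}(1-M_{k-1})$) rather than feeding $M_{j+1}^{-1}\geq 1$ into \eqref{M} as you do; both work. For \eqref{lambdajj} the paper introduces the model sequence $d_j^\prime=\lambda_{j+1}/\lambda_j$, notes that it satisfies the same recursions with equality, and compares $M_j^\prime$ with $d_j^\prime$ via \eqref{mjp1} and its exact analogue, which packages away the algebraic identity you single out as the remaining obstacle. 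That identity is precisely the assertion that $d_j^\prime$ satisfies \eqref{Mp} with equality, and it does hold: writing $m=n-k$, one has $2m+1-m\delta=\tfrac{(r-2k+1)(2-\delta)}{2}$, $2m-1-(m-1)\delta=\tfrac{(r-2k-1)(2-\delta)}{2}$, and $4m-(2m-1)\delta=(r-2k)(2-\delta)$, so after clearing denominators the identity reduces to $2(r-2k)\lambda_k-(r-2k-1)\lambda_{k-1}=(r-2k+1)\lambda_{k+1}$, a polynomial identity in $k$ and $r$ for $\lambda_j=j(r-j)$ that is checked by direct expansion. With that, both of your inductions close.
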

\begin{proof} The proof is identical to that in \cite[pp.23–24]{LM1}. We first prove (\ref{MJ1}). From (\ref{mj1}) for $j=n$, we have 
\begin{align*}
M_n-1\leq \frac{2n-1}{2}(2-\delta)(1-M_n).
\end{align*}
This implies $M_n\leq 1$. Suppose that (\ref{MJ1}) holds for some $j=k$, where $2\leq k\leq n$. We show that this implies (\ref{MJ1}) for $j=k-1$. From (\ref{mj1}) for $j=k$, we have 
\begin{align*}
M_{k-1}-1&\leq \frac{2k-3}{2}(2-M_k^{-1}-M_{k-1}) \\
&\leq \frac{2k-3}{2}(1-M_{k-1}).
\end{align*}
This implies $M_{k-1}\leq 1$, and thus we have proved (\ref{MJ1}). We next prove (\ref{lambdajj}). We introduce the following symbols:
\begin{align*}
d_j^\prime&\coloneqq \frac{\lambda_{j+1}}{\lambda_j}\ \text{for $j=1,\dots, n-1$}, \\
D_0^\prime&\coloneqq 2-d_1^\prime, \\
D_j^\prime&\coloneqq 2-d_j^{\prime-1}-d_{j+1}^\prime \ \text{for $j=1,\dots, n-2$}, \\
D_{n-1}^\prime&\coloneqq (2-\delta)(1-d_{n-1}^{\prime-1}).
\end{align*}
Then from the definition of $d_1^\prime,\dots, d_{n-1}^\prime$, the following holds:
\begin{align}
&0=-3+3d_1^\prime-d_1^\prime d_2^\prime, \\
&0=-3+3d_j^\prime+d_{j-1}^{\prime-1}-d_j^\prime d_{j+1}^\prime\ \text{$j=2,\dots, n-2$}, \\
&0=(3-\delta)d_{n-1}^\prime-(4-\delta)+d_{n-2}^{\prime-1}. 
\end{align}
Therefore from the definition of $D_1^\prime,\dots, D_{n-1}^\prime$ and the proof of Lemma \ref{MMBB}, we can check that the following holds:
\begin{align}
d_{n-j}^\prime-1=\frac{2j-1-(j-1)\delta}{2-\delta}D_{n-(j+1)}^\prime\ \text{for $j=1,\dots, n-1$}. \label{djp1}
\end{align}
We provide a proof of (\ref{lambdajj}). From (\ref{mjp1}) and (\ref{djp1}) for $j=n-1$, we have
\begin{align*}
M_1^\prime-1&\leq \frac{2n-3-(n-2)\delta}{2-\delta} (2-M_1^\prime), \\
d_1^\prime-1&=\frac{2n-3-(n-2)\delta}{2-\delta} (2-d_1^\prime).
\end{align*}
This implies $M_1^\prime\leq d_1^\prime$. Suppose that (\ref{lambdajj}) holds for some $j=k$, where $1\leq k\leq n-2$. We show that this implies (\ref{lambdajj}) for $j=k+1$. From (\ref{mjp1}) and (\ref{djp1}) for $j=n-(k+1)$, we have 
\begin{align*}
M_{k+1}^\prime-1&\leq \frac{2(n-k)-3-(n-k-2)\delta}{2-\delta}(2-M_k^{\prime-1}-M_{k+1}^\prime) \\
&\leq \frac{2(n-k)-3-(n-k-2)\delta}{2-\delta}(2-d_k^{\prime-1}-M_{k+1}^\prime), \\
d_{k+1}^\prime-1&=\frac{2(n-k)-3-(n-k-2)\delta}{2-\delta}(2-d_k^{\prime-1}-d_{k+1}^\prime).
\end{align*}
This implies $M_{k+1}^\prime\leq d_{k+1}^\prime$, which completes the proof of (\ref{lambdajj}).
\end{proof}
\begin{lemm}{\it Suppose that $\varphi$ is not identically $-\infty$ and that $e^{-\varphi}h_\refe$ is non-flat. Suppose also that $\varphi$ is smooth. Then the following estimate holds:
\begin{align*}
&\frac{\lambda_{j-1}}{\lambda_j}=\frac{(j-1)(r-j+1)}{j(r-j)}<H_{j-1}\otimes H_j^{-1}<1 \ \text{for all $2\leq j\leq n$},\\
& H_r\otimes H_1^{-1}<1.
\end{align*}
}
\end{lemm}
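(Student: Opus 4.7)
The plan is to upgrade the non-strict bounds $M_j\le 1$ and $M_j^\prime\le \lambda_{j+1}/\lambda_j$ established in Lemma \ref{26} to pointwise strict inequalities by combining a propagation argument at a saturation point with the strong maximum principle, and then to derive a contradiction from the non-flatness of $e^{-\varphi}h_\refe$ (for the upper bound) and from $\varphi\not\equiv -\infty$ (for the lower bound).

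For the upper bound $\sigma_j<0$ (equivalently $H_{j-1}\otimes H_j^{-1}<1$ for $j\in\{1,\dots,n\}$, which also handles $H_r\otimes H_1^{-1}<1$ as the case $j=1$), suppose for contradiction that $\sigma_j(x_0)=0$ for some $x_0\in X$. Since $\sigma_i\le 0$ everywhere by Lemma \ref{26}, $x_0$ is a global maximum of $\sigma_j$. Inserting $\sigma_j(x_0)=0$ into the appropriate equation among (\ref{Hsigma})--(\ref{Hsigma3}) and using $\Delta\sigma_j(x_0)\le 0$ yields an inequality which, combined with $\sigma_i\le 0$ for $i=j\pm 1$ (interpreted appropriately at the boundary indices), forces $\sigma_{j\pm 1}(x_0)=0$. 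Iterating this propagation gives $\sigma_i(x_0)=0$ for every $i$. Setting $v_i\coloneqq -\sigma_i\ge 0$ and using first-order Taylor bounds on the exponentials appearing in the equations, I obtain a linear inequality of the form $\Delta v_i\le C v_i$ valid in a neighborhood of $x_0$; since $v_i(x_0)=0$ is an interior minimum of a nonnegative function, the strong maximum principle forces $v_i\equiv 0$ in a neighborhood, and hence on all of $X$ by connectedness.

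With every $\sigma_i\equiv 0$, the metrics $H_0,\dots,H_{r-1}$ all coincide with a common metric $H$; chasing the definitions through $H_j=h_j^{-1}\otimes h_{j+1}$ and $H_0=h_r^{-1}\otimes h_1\otimes(e^{-\varphi}h_\refe)^{-r}$ gives $h_r^{-1}\otimes h_1=H^{1-r}$ and then $H=(e^{-\varphi}h_\refe)^{-1}$, so substitution into (\ref{cyc2}) reduces to $\inum F_H=0$, contradicting non-flatness. For the lower bound $\sigma_j^\prime<\log d_j^\prime$ with $d_j^\prime=\lambda_{j+1}/\lambda_j$, I would run the parallel argument on the primed quantities using (\ref{Hs})--(\ref{Hsigma5}) and the recursion (\ref{djp1}) satisfied by $d_j^\prime$. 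Saturation at a point propagates to saturation of all adjacent indices and then globalizes by the strong maximum principle, yielding $H_j=\lambda_j H$ for a common $H$; comparing the boundary equation at $j=1$ with the interior equations of (\ref{cyc2}) then forces $\vol(H_0)\equiv 0$, hence $\varphi\equiv -\infty$, contradicting the hypothesis.

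The main obstacle is the case analysis at the boundary indices $j=1,n$: one must check, from the specific sign structure of the nonlinearities in (\ref{Hsigma}), (\ref{Hsigma3}), (\ref{Hs}), and (\ref{Hsigma5}), that saturation at a single index really does propagate to the neighboring ones, and that the resulting linearized inequality has a sign fitting a scalar strong maximum principle. Once this is in place, the algebraic reduction to the flat case (resp.\ to $\varphi\equiv -\infty$) is mechanical.
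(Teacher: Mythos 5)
Your proposal is correct and follows essentially the same route as the paper: both rest on the non-strict bounds $M_j\le 1$, $M_j'\le \lambda_{j+1}/\lambda_j$ from Lemma \ref{26}, linearize the inequalities (\ref{Hsigma})--(\ref{Hsigma5}) to apply the strong maximum principle, propagate the equality case to adjacent indices, and derive the contradiction with non-flatness (resp.\ $\varphi\not\equiv-\infty$) from the vanishing identity $3\lambda_1-3\lambda_2+\lambda_3=0$. The only difference is the order of operations (you saturate pointwise across indices before globalizing, the paper globalizes each index before propagating), which is immaterial; the boundary-index checks at $j=1,n$ that you flag do go through with the stated sign structure.
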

\begin{proof} 
We first consider $\sigma_1,\dots, \sigma_n$. From (\ref{MJ1}), (\ref{Hsigma}), (\ref{Hsigma2}), and (\ref{Hsigma3}), we have
\begin{align}
&\inum\Lambda_{H_1}\partial \bar{\partial} \sigma_1\geq 3e^{\sigma_1}-3, \label{sigmad1}\\
&\inum \Lambda_{H_j}\partial\bar{\partial}\sigma_j \geq 2e^{\sigma_j}-2 \ \text{for $j=2,\dots, n-1$}. \label{sigmad2} \\
&\inum \Lambda_{H_n}\partial\bar{\partial} \sigma_n\geq (3-\delta)(e^{\sigma_n}-1). \label{sigmad3}
\end{align}
From (\ref{MJ1}), (\ref{sigmad1}), (\ref{sigmad2}), (\ref{sigmad3}), and the strong maximum principle, we see that for each $j=1,\dots, n$, either of the following holds: $e^{\sigma_j}<1$ or $e^{\sigma_j}=1$, where the latter equality means that $e^{\sigma_j}$ is identically 1 on $X$. If $e^{\sigma_1}=1$, then from (\ref{Hsigma}), we have $e^{\sigma_2}=1$. If $e^{\sigma_j}=1$ for some $j=2,\dots, n-1$, then from (\ref{Hsigma2}), we have $e^{\sigma_{j-1}}=e^{\sigma_{j+1}}=1$. If $e^{\sigma_n}=1$, then from (\ref{Hsigma3}), we have $e^{\sigma_{n-1}}=1$. Note that if $e^{\sigma_1}=\cdots =e^{\sigma_n}=1$, then $e^{-\varphi}h_\refe$ is flat. Therefore if $e^{-\varphi}h_\refe$ is non-flat, then we have $e^{\sigma_j}<1$ for all $j=1,\dots, n$. We next consider $\sigma_1^\prime,\dots, \sigma_{n-1}^\prime$. From (\ref{lambdajj}), (\ref{Hs}), (\ref{Hsigma4}), and (\ref{Hsigma5}), we have
\begin{align}
&\inum \Lambda_{H_1}\partial \bar{\partial} \sigma^\prime_1\geq -3+3e^{\sigma_1^\prime}-d_2^\prime e^{\sigma_1^\prime}, \label{sigmadp1} \\
&\inum \Lambda_{H_j}\partial \bar{\partial} \sigma^\prime_j=-3+3e^{\sigma_j^\prime}+d_{j-1}^{\prime-1}-d_{j+1}^\prime e^{\sigma_j^\prime}\ \text{$j=2,\dots, n-2$}, \label{sigmadp2}\\
&\inum \Lambda_{H_{n-1}}\partial \bar{\partial} \sigma^\prime_{n-1}=(3-\delta)e^{\sigma_{n-1}^\prime}-(4-\delta)+d_{n-2}^{\prime-1}. \label{sigmadp3}
\end{align}
From (\ref{lambdajj}), (\ref{sigmadp1}), (\ref{sigmadp2}), and (\ref{sigmadp3}), and the strong maximum principle, we see that for each $j=1,\dots, n-1$, either of the following holds: $e^{\sigma_j^\prime}<d_j^\prime$ or $e^{\sigma_j^\prime}=d_j^\prime$. If $e^{\sigma_1^\prime}=d_1^\prime$, then from (\ref{Hs}), we have $e^{\sigma_2^\prime}=d_2^\prime$. If $e^{\sigma_j^\prime}=d_j^\prime$ for some $j=2,\dots, n-2$, then from (\ref{Hsigma4}), we have $e^{\sigma_{j-1}}=d_{j-1}$ and $e^{\sigma_{j+1}^\prime}=d_{j+1}^\prime$. If $e^{\sigma_{n-1}^\prime}=d_{n-1}^\prime$, then from (\ref{Hsigma5}), we have $e^{\sigma_{n-2}^\prime}=d_{n-2}^\prime$. Note that if $e^{\sigma_j^\prime}=d_j^\prime$ for all $j=1,\dots, n-1$, then from (\ref{sigma4}) for $j=1$ we see that $\varphi$ is identically $-\infty$. Therefore if $\varphi$ is not identically $-\infty$, then we have $e^{\sigma_j^\prime}<d_j^\prime$ for all $j=1,\dots, n-1$. This implies the claim. 
\end{proof}

Then we prove Proposition \ref{r/2}.
\begin{proof}[Proof of Proposition \ref{r/2}] Let $h=(h_1,\dots, h_r)$ be a complete solution to equation (\ref{phi}) in Section {2.2} associated with $\varphi$ that satisfies Condition \ref{three} in Section \ref{2.2}. Since it is sufficient to lift $X$ to the universal covering space and show the proposition, we will assume from the beginning that $h$ has an approximation $(h_\epsilon)_{\epsilon>0}$ in Condition \ref{three}. We denote by $H_{j,\epsilon} \ (j=1,\dots, r-1)$ the complete metrics associated with $e^{-\varphi_\epsilon}h_\ast$ and by $H_{r,\epsilon}$ the metric $ H_{1,\epsilon}^{-1}\otimes \cdots \otimes H_{r-1,\epsilon}^{-1} \otimes (e^{-\varphi_\epsilon}h_\refe)^{-r}$. Moreover, we set $\sigma_{j,\epsilon}\coloneqq \log(H_{j-1,\epsilon}\otimes H_{j,\epsilon}^{-1})$ for $j=1,2,\dots, n$ and $\sigma_{j,\epsilon}^\prime\coloneqq \log(H_{j,\epsilon}^{-1}\otimes H_{j+1,\epsilon})$ for $j=1,\dots, n-1$. We have proved (\ref{Gauss}) and (\ref{Euler}) for the case where $\varphi$ is smooth. From (\ref{Gauss}) and (\ref{Euler}) for $e^{-\varphi_\epsilon}h_\refe$, we have
\begin{align}
&\frac{\lambda_{j-1}}{\lambda_j}=\frac{(j-1)(r-j+1)}{j(r-j)}<H_{j-1,\epsilon}\otimes H_{j,\epsilon}^{-1}<1 \ \text{for all $2\leq j\leq n$},\label{Gausse}\\
& H_{r,\epsilon}\otimes H_{1,\epsilon}^{-1}<1. \label{Eulere}
\end{align}
By taking the limit $\epsilon\searrow 0$ of (\ref{Gausse}) and (\ref{Eulere}) at each point of $X$, we have 
\begin{align}
&\frac{\lambda_{j-1}}{\lambda_j}=\frac{(j-1)(r-j+1)}{j(r-j)}\leq H_{j-1}\otimes H_j^{-1}\leq 1 \ \text{for all $2\leq j\leq n$},\label{Gauss0}\\
& H_r\otimes H_1^{-1}\leq 1. \label{Euler0}
\end{align}
Our goal is to show that $e^{\sigma_j} < 1$ for each $j = 2, \dots, n$, that $e^{\sigma_j^\prime} < d_j^\prime = \frac{\lambda_{j+1}}{\lambda_j}$ for each $j = 1, \dots, n - 1$, and that $\sigma_1 < 1$, under the assumption that $e^\varphi h_\refe^{-1}$ belongs to $W^{1,2}_{loc}$. However, since the proofs are essentially the same for all cases, we will, for simplicity, only present the proof for $\sigma_1$. We use the mean value inequality (see Theorem \ref{MVI} in Section \ref{CYMMVI}). Since we have $e^{\sigma_1}, e^{\sigma_2}\leq 1$, from (\ref{Hsigma}), it holds that
\begin{align}
\inum\Lambda_{H_1}\partial \bar{\partial} (e^{\sigma_1}-1)&\geq e^{\sigma_1}(3e^{\sigma_1}-3) \geq 3(e^{\sigma_1}-1).
\end{align}
We set $u\coloneqq 1-e^{\sigma_1}$ and denote by $g$ the K\"ahler metric induced by $H_1$. Suppose that $e^{\varphi}h_\refe^{-1}$ belongs to $W^{1,2}_{loc}$. Then the function $u$ also belongs to $W^{1,2}_{loc}$. Let $x_0 \in X$ be an arbitrary point. Then from the mean value inequality theorem, there exists a positive constant $C$ and $0<p<1$ such that
\begin{align}
u(x_0)\geq C\left(\int_{B_g(x_0,1/4)}u^pd\mu_g\right)^{1/p}. \label{ux0}
\end{align}
Let $A_u$ be the set $\{x\in X\mid u(x)=0\}$. Inequality (\ref{ux0}) implies that the set $A_u$ is both closed and open. By the same reason as in the proof of Lemma \ref{26}, $u$ is a non-constant function. Therefore, it follows that $A_u$ is an empty set, and thus, we have $e^{\sigma_1}<1$. 
\end{proof}

\begin{lemm}\label{PQ}{\it Let $P_0\leq P_1\leq \cdots \leq P_{r-1}$ and $Q_0\leq Q_1\leq \dots \leq Q_{r-1}$ be non-negative real numbers such that $P_0+\cdots +P_{r-1}=1$, $Q_0+\cdots +Q_{r-1}=1$. Suppose that $P_1,\dots, P_{r-1}$ and $Q_1,\dots, Q_{r-1}$ are strictly positive and that the following holds:
\begin{align*}
Q_j/Q_{j+1}\leq P_j/P_{j+1} \ \text{for all $j=0,\dots, r-1$.}
\end{align*}
Then the Shannon entropy satisfies 
\begin{align*}
S(Q_0, Q_1,\dots, Q_{r-1})\leq S(P_0, P_1,\dots, P_{r-1}).
\end{align*}
Moreover, the equality is achived if and only if $Q_j/Q_{j+1}=P_j/P_{j+1} $ holds for all $j=1,\dots, r-1$.
} 
\end{lemm}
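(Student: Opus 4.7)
The plan is to parametrize each admissible distribution by its successive ratios $r_j := P_j/P_{j+1}$ (respectively $s_j := Q_j/Q_{j+1}$), $j = 0, \dots, r-2$, which the monotonicity hypothesis constrains to $[0,1]$ and which, together with the normalization $\sum_k P_k = 1$, uniquely determine the distribution. The hypothesis becomes $s_j \leq r_j$ for every $j$. Setting $a_k := \prod_{l=k}^{r-2} r_l$ (with $a_{r-1} := 1$) and $A := \sum_k a_k$, we have $P_k = a_k / A$ and
\begin{align*}
S = \log A - \frac{1}{A}\sum_k a_k \log a_k,
\end{align*}
which makes $S$ a smooth function $\Phi(r_0, \dots, r_{r-2})$. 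The aim is to show $\Phi$ is non-decreasing in each coordinate; linear interpolation in the ratio parameters then delivers the inequality.

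The core computation is the partial derivative. Using $\partial a_k/\partial r_j = a_k/r_j$ for $k \leq j$ and zero otherwise, together with $\log a_k = \log P_k + \log A$, everything telescopes to
\begin{align*}
\frac{\partial S}{\partial r_j} = \frac{1}{r_j}\bigl(S_{\leq j} - F_j S\bigr), \qquad F_j := \sum_{k \leq j} P_k, \quad S_{\leq j} := -\sum_{k\leq j}P_k\log P_k.
\end{align*}
Decomposing $S = S_{\leq j} + S_{> j}$ and using $1 = F_j + (1 - F_j)$, this equals $r_j^{-1}\bigl((1-F_j)S_{\leq j} - F_j S_{>j}\bigr)$, and a symmetrization swapping dummy indices in the cross terms recasts it in the manifestly non-negative form
\begin{align*}
\frac{\partial S}{\partial r_j} = \frac{1}{r_j}\sum_{k > j,\; l \leq j} P_k P_l \log\frac{P_k}{P_l} \geq 0,
\end{align*}
the last inequality following from $P_l \leq P_k$ whenever $l \leq j < k$.

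Linear interpolation $\alpha_j(t) := (1-t)s_j + t r_j$, $t \in [0,1]$, produces a family of admissible distributions $P(t)$, which remain non-decreasing and strictly positive on $\{1, \dots, r-1\}$. The chain rule gives $\tfrac{d}{dt}S(P(t)) = \sum_j (r_j - s_j)\,(\partial S/\partial r_j)|_{P(t)} \geq 0$, whence $S(Q) \leq S(P)$. For the equality clause, if $S(Q) = S(P)$ the non-negative continuous integrand must vanish for every $t$; for each $j \geq 1$ with $r_j > s_j$, the $k = j+1,\, l = j$ summand in the derivative formula forces $P(t)_j = P(t)_{j+1}$, i.e.\ $\alpha_j(t) \equiv 1$, which gives $s_j = r_j = 1$ and contradicts $r_j > s_j$; the case $j=0$ is handled by a similar argument, with an extra case split at points where $P(t)_0 = 0$. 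The main technical obstacle I expect is the symmetrization bookkeeping that recasts $\partial S/\partial r_j$ in manifestly non-negative form; the equality analysis also requires some care near the boundary of parameter space where $P_0$ may vanish, but the strict positivity of $Q_1, \dots, Q_{r-1}$ carries through the interpolation.
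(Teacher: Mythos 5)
Your proposal is correct and follows essentially the same route as the paper: parametrize the distribution by the successive ratios $P_j/P_{j+1}$, show the entropy is non-decreasing in each ratio coordinate, and conclude by monotonicity along a path from $Q$ to $P$. Your symmetrized closed form $\partial S/\partial r_j = r_j^{-1}\sum_{k>j,\,l\leq j}P_kP_l\log(P_k/P_l)$ is a slightly more explicit (and manifestly non-negative) packaging of the same chain-rule computation the paper carries out term by term, and it also makes the equality analysis cleaner.
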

\begin{proof}
We set $\widetilde{S}(t_1,\dots, t_{r-1})\coloneqq S(1-\sum_{j=1}^{r-1}t_j, t_1,\dots, t_{r-1})$ for variables $t_1,\dots, t_{r-1}$ that take values in $\{t\mid 0<t\leq 1\}$. We set $t_0$ as $t_0\coloneqq 1-\sum_{j=1}^{r-1}t_j$ and $s_0, s_1, \dots, s_{r-1}$ as 
\begin{align}
s_j&\coloneqq t_j/t_{j+1} \ \text{for $j=0,1, \dots, r-2$}, \label{stt} \\
s_{r-1}&\coloneqq 1.
\end{align}
We also introduce the symbols $s^{(l)}\ (l=0,1,\dots, r-1)$ as follows:
\begin{align*}
s^{(l)}&\coloneqq \Pi_{j=l}^{r-1}s_j=s_l\cdot s_{l+1}\cdot\cdots s_{r-1} \ \text{for $l=0,\dots, r-1$}.
\end{align*}
By multiplying both sides of (\ref{stt}) for all $j=0,\dots, r-2$, we obtain
\begin{align}
s^{(0)}=t_0/t_{r-1}. \label{s0}
\end{align}
From (\ref{stt}), we have
\begin{align}
t_j=t_{j+1}s_j \ \text{for all $j=0,1,\dots, r-2$}. \label{tts}
\end{align}
From (\ref{tts}), we have
\begin{align}
\sum_{j=1}^{r-1}t_j=\left(\sum_{l=1}^{r-1}s^{(l)}\right)t_{r-1}. \label{sumtj}
\end{align}
Therefore from (\ref{s0}) and (\ref{sumtj}), we have
\begin{align*}
t_{r-1}s^{(0)}=1-\left(\sum_{l=1}^{r-1}s^{(l)}\right)t_{r-1}
\end{align*}
and thus
\begin{align}
t_{r-1}=\frac{1}{\sum_{l=0}^{r-1}s^{(l)}}. \label{tr-1}
\end{align}
From (\ref{stt}) and (\ref{tr-1}), we obtain
\begin{align}
t_j=\frac{s^{(j)}}{\sum_{l=0}^{r-1}s^{(l)}} \ \text{for $j=0,1,\dots, r-1$}.
\end{align}
By a direct calculation, for each $j=1,\dots, r-1$, we have
\begin{align}
\frac{\partial\widetilde{S}}{\partial t_j}=\log(t_0/t_j)=\sum_{k=0}^{j-1}\log(s_k).
\end{align}
Also, for each $j=1,\dots, r-1$ and $k=0,\dots, r-2$, we have
\begin{align}
\frac{\partial t_j}{\partial s_k}&=\frac{1}{(\sum_{l=0}^{r-1}s^{(l)})^2}\left(-s^{(j)}\frac{\partial}{\partial s_k}\left(\sum_{l=0}^{r-1}s^{(l)}\right)+\frac{\partial s^{(j)}}{\partial s_k}\sum_{l=0}^{r-1}s^{(l)}\right). \label{tjsk}
\end{align}
Suppose that $j>k$. Then we have
\begin{align*}
\frac{\partial s^{(j)}}{\partial s_k}=0.
\end{align*}
Thereofore from (\ref{tjsk}), we obtain 
\begin{align*}
\frac{\partial t_j}{\partial s_k}<0. 
\end{align*}
Suppose that $j\leq k$. Then we have
\begin{align*}
\frac{\partial t_j}{\partial s_k}&=\frac{1}{(\sum_{l=0}^{r-1}s^{(l)})^2}\left(-s^{(j)}\frac{\partial}{\partial s_k}\left(\sum_{l=0}^{r-1}s^{(l)}\right)+\frac{\partial s^{(j)}}{\partial s_k}\sum_{l=0}^{r-1}s^{(l)}\right) \\
&=\frac{\frac{\partial s^{(j)}}{\partial s_k}}{(\sum_{l=0}^{r-1}s^{(l)})^2}\left(-s_k\frac{\partial}{\partial s_k}\left(\sum_{l=0}^{r-1}s^{(l)}\right)+\sum_{l=0}^{r-1}s^{(l)}\right) \\
&=\frac{\frac{\partial s^{(j)}}{\partial s_k}}{(\sum_{l=0}^{r-1}s^{(l)})^2}\left(-\sum_{l=0}^ks^{(l)}+\sum_{l=0}^{r-1}s^{(l)}\right) >0.
\end{align*}
From the above calculations, for each $k=0,1,\dots, r-2$, we have
\begin{align}
\frac{\partial S}{\partial s_k}&=\sum_{j=1}^{r-1}\frac{\partial S}{\partial t_j}\frac{\partial t_j}{\partial s_k} \notag \\
&=\sum_{j=1}^{r-1}(\sum_{l=0}^{j-1}\log (s_l))\frac{\partial t_j}{\partial s_k} \notag \\
&=\sum_{l=0}^{r-1}\log (s_l) \sum_{j=l+1}^{r-1}\frac{\partial t_j}{\partial s_k} \notag \\
&=\sum_{l=0}^{r-1}\log (s_l) \frac{\partial}{\partial s_k}\sum_{j=l+1}^{r-1}t_j \notag \\
&=\sum_{l=0}^{k-1}\log (s_l) \frac{\partial}{\partial s_k}\sum_{j=l+1}^{r-1}t_j+\sum_{l=k}^{r-1}\log (s_l) \frac{\partial}{\partial s_k}\sum_{j=l+1}^{r-1}t_j\notag \\
&=\sum_{l=0}^{k-1}\log (s_l) \frac{\partial}{\partial s_k}\sum_{j=0}^l(-t_j)+\sum_{l=k}^{r-1}\log (s_l) \frac{\partial}{\partial s_k}\sum_{j=l+1}^{r-1}t_j>0. \label{congratulations}
\end{align}
From (\ref{congratulations}), we have the claim. 
\end{proof}

From Proposition \ref{r/2} and Lemma \ref{PQ}, we have Theorem \ref{main theorem a}.

\subsection{Proof of Theorem \ref{main theorem b}}
We next prove Theorem \ref{main theorem b}.
\begin{proof}[Proof of Theorem \ref{main theorem b}] 
We set $Z_{r,\beta}$ and $h_{r,\beta}$ as follows:
\begin{align}
&Z_{r,\beta}\coloneqq \sum_{j=1}^{r-1}\lambda_j^\beta, \label{naze}\\
&h_{r,\beta}\coloneqq \sum_{j=1}^{r-1}\frac{1}{r}\left(\frac{\lambda_j^\beta}{r^{2\beta}}\right) \log\left(\frac{\lambda_j^\beta}{r^{2\beta}}\right).
\label{er}
\end{align}
Then $S_{r,\beta}$ is calculated as follows:
\begin{align}
S_{r,\beta}&=-\sum_{j=1}^{r-1}\left(\frac{\lambda_j^\beta}{Z_{r,\beta}}\right)\log\left(\frac{\lambda_j^\beta}{Z_{r,\beta}}\right) \notag \\ 
&=-\sum_{j=1}^{r-1}\frac{\lambda_j^\beta}{Z_{r,\beta}} \left(\log\frac{\lambda_j^\beta}{r^{2\beta}}-\log r+\log\frac{r^{2\beta+1}}{Z_{r,\beta}}\right) \notag \\
&=-\frac{r^{2\beta+1}}{Z_{r,\beta}}\sum_{j=1}^{r-1}\frac{1}{r}\left(\frac{\lambda_j^\beta}{r^{2\beta}}\right) \log\left(\frac{\lambda_j^\beta}{r^{2\beta}}\right)+\log r-\log\left(\frac{r^{2\beta+1}}{Z_{r,\beta}}\right) \notag \\
&=-\frac{h_{r,\beta}}{(Z_{r,\beta}/r^{2\beta+1})}+\log r+\log\left(\frac{Z_{r,\beta}}{r^{2\beta+1}}\right). \label{Srb}
\end{align}
We define constants $c_\beta$ and $d_\beta$, which may not be finite, as follows:
\begin{align*}
&c_\beta\coloneqq \int_0^1s^\beta(1-s)^\beta ds, \\
&d_\beta\coloneqq \int_0^1s^\beta(1-s)^\beta \log s\ ds.
\end{align*}
Note that each of the above constants is finite if and only if $\beta\leq -1$, and if $\beta\leq -1$, then $c_\beta=\infty$ and $d_\beta=-\infty$. Suppose that $\beta$ is positive. Then since $Z_{r,\beta}/r^{2\beta+1}$ and $h_{r,\beta}$ are Riemann sums of $f_\beta(s)\coloneqq s^\beta(1-s)^\beta$ and $g_\beta (s)\coloneqq s^\beta(1-s)^\beta\log s^\beta(1-s)^\beta$ on the interval $0\leq s\leq 1$, respectively, we have 
\begin{align*}
\lim_{r\to\infty} (S_{r,\beta}-\log r)=-\frac{2\beta d_\beta}{c_\beta}+\log(c_\beta).
\end{align*}
We next consider the case where $\beta$ is negative. The following estimate holds:
\begin{align}
Z_{r,\beta} /r^{2\beta+1}
&\geq \int_{1/r}^{n/r}s^\beta(1-s)^\beta ds+\int_{n/r}^{1-1/r}s^\beta(1-s)^\beta ds \notag \\
&=\int_{1/r}^{1-1/r}s^\beta(1-s)^\beta ds, \label{1-s}\\
Z_{r,\beta}/r^{2\beta+1}&\leq \int_{1/r}^{n/r}s^\beta (1-s)^\beta ds +\frac{1}{r^{2\beta+1}}(r-1)^\beta+\int_n^{1-1/r}s^\beta (1-s)^\beta ds+\frac{1}{r^{2\beta+1}}(r-1)^\beta \notag\\
&=\int_{1/r}^{1-1/r}s^\beta (1-s)^\beta ds+\frac{2}{r^{2\beta+1}}(r-1)^\beta \label{int}. 
\end{align}
As a consequence of the above, for the case where $\beta\leq -1$, as $r$ goes to $\infty$, we have
\begin{align*}
Z_{r,\beta} /r^{2\beta+1}=\begin{cases}
C_{-1}\log r+o(\log r) \ &\text{if $\beta=-1$}\\
C_\beta r^{-\beta-1}+o(r^{-\beta-1}) \ &\text{if $\beta< -1$},
\end{cases}
\end{align*}
where $C_{-1}$ and $C_\beta$ are positive constants and where we denote by $o(\log r)$ (resp. $o(r^{-\beta-1})$) the terms that diverge strictly slower than $\log r$ (resp. $r^{-\beta-1}$) in the limit $r\to\infty$.
Also, by the monotonicity of the function $g_\beta$ at both ends of the interval, there exists $\epsilon_\beta>0$ such that on the interval $0<s\leq \epsilon_\beta$ (resp. $1-\epsilon_\beta\leq s<1$), $g_\beta$ monotonically decreases (resp. monotonically increases). Therefore, if we take $r$ large enough and $j$ (resp. $k$) small enough (resp. close enough to $r$) so that $j/r\leq \epsilon_\beta$ (resp. $1-\epsilon_\beta\leq k/r$) holds, the following holds:
\begin{align}
&\int_{(j-1)/r}^{j/r}g_\beta(s)ds\geq \frac{1}{r}g_\beta(j/r)\geq \int_{j/r}^{(j+1)/r}g_\beta(s)ds, \label{jr}\\ 
&\int_{(k-1)/r}^{k/r}g_\beta(s)ds\leq \frac{1}{r}g_\beta(k/r)\leq \int_{k/r}^{(k+1)/r}g_\beta(s)ds. \label{kr}
\end{align}
We take $r$ large enough and set $j_0\coloneqq [r\epsilon_\beta ]+1$ and $k_0\coloneqq [r-r\epsilon_\beta]-1$, where we denote by $[\cdot]$ the Gaussian symbol. Note that we have $(j_0-1)/r\leq \epsilon_\beta$ and $(k_0+1)/r\leq 1-\epsilon_\beta$. Then we have the following estimate:
\begin{align}
&h_{r,\beta}\leq \frac{1}{r}g_\beta(1/r)+\int_{1/r}^{(j_0-1)/r}g_\beta(s)ds+\sum_{j=j_0}^{k_0}\frac{1}{r}g_\beta(j/r)+\int_{(k_0+1)/r}^{1-1/r}g_\beta(s)ds+\frac{1}{r}g_\beta(1-1/r), \label{grb}\\
&h_{r,\beta}\geq \int_{1/r}^{j_0/r}g_\beta(s)ds+\sum_{j=j_0}^{k_0}\frac{1}{r}g_\beta(j/r)+\int_{k_0/r}^{1-1/r}g_\beta(s)ds. \label{gbs}
\end{align}
From (\ref{grb}) and (\ref{gbs}), for the case where $\beta>-1$, we have
\begin{align}
\lim_{r\to\infty} h_{r,\beta}&=\int_0^{\epsilon_\beta} g_\beta(s)ds+\int_{\epsilon_\beta}^{1-\epsilon_\beta}g_\beta(s)ds +\int_{\epsilon_\beta}^{1-\epsilon_\beta}g_\beta(s)ds, \notag \\
&=\int_0^1g_\beta(s)ds \noindent \\
&=2\beta\int_0^1s^\beta(1-s^\beta)\log sds.\label{limr}
\end{align}
where we have used the easily verifiable fact that for the case where $\beta>-1$, the terms $\frac{1}{r}g_\beta(1/r)=\frac{1}{r}g_\beta(1-1/r)$ goes to $0$ as $r\to\infty$.
Also, from (\ref{grb}) and (\ref{gbs}), for the case where $\beta\leq -1$, we have
\begin{align}
h_{r,\beta}=\begin{cases}
C_{-1}^\prime(\log r)^2+o((\log r)^2) \ &\text{if $\beta=-1$}\\
C_\beta^\prime r^{-\beta-1}\log r+o(r^{-\beta-1}\log r) \ &\text{if $\beta< -1$},
\end{cases} \label{hrbeta}
\end{align}
where $C^\prime_{-1}$ and $C_\beta^\prime$ are positive constants and where we denote by $o((\log r)^2)$ (resp. $o(r^{-\beta-1}\log r)$) the terms that decay strictly slower than $(\log r)^2$ (resp. $r^{-\beta-1}\log r$) as $r$ goes to $\infty$.
Then we prove the assertion for the case where $\beta$ is negative. Suppose that $\beta>-1$. From (\ref{Srb}), (\ref{1-s}), (\ref{int}), and (\ref{limr}), we have 
\begin{align*}
\lim_{r\to\infty} (S_{r,\beta}-\log r)=-\frac{2\beta d_\beta}{c_\beta}+\log(c_\beta).
\end{align*}
We next suppose that $\beta\leq -1$. Then from (\ref{Srb}), (\ref{1-s}), (\ref{int}), and (\ref{hrbeta}), we have
\begin{align*}
S_{r,\beta}-\log r=
\begin{cases}
-\frac{C_{-1}^\prime(\log r)^2+o((\log r)^2)}{C_{-1}\log r+o(\log r)}+\log\left(C_{-1}\log r+o(\log r)\right) \ &\text{if $\beta=-1$}\\
-\frac{C_\beta^\prime r^{-\beta-1}\log r+o(r^{-\beta-1}\log r)}{C_\beta r^{-\beta-1}+o(r^{-\beta-1})}+\log\left(C_\beta r^{-\beta-1}+o(r^{-\beta-1})\right) \ &\text{if $\beta< -1$}.
\end{cases}
\end{align*}
For both cases, we have $\lim_{r\to\infty}(S_{r,\beta}-\log r)=-\infty$. This implies the claim.
\end{proof}

\noindent
E-mail address 1: natsuo.miyatake.e8@tohoku.ac.jp

\noindent
E-mail address 2: natsuo.m.math@gmail.com \\

\noindent
Mathematical Science Center for Co-creative Society, Tohoku University, 468-1 Aramaki Azaaoba, Aoba-ku, Sendai 980-0845, Japan.


\begin{thebibliography}{99}
\bibitem{AF1}E. Aldrovandi and G. Falqui, {\it Geometry of Higgs and Toda fields on Riemann surfaces}, J. Geom. Phys. 17 (1995), no. 1, 25-48.
\bibitem{Bar1} D. Baraglia, {\it $G_2$ geometry and integrable systems}, arXiv:1002.1767 ver2 (2010).
\bibitem{BCHM1} T. Bayraktar, D. Coman, H. Herrmann, and G. Marinescu, {\it A survey on zeros of random holomorphic sections}, arXiv:1807.05390 ver2 (2018).
\bibitem{BH1} Y. Benoist and D. Hulin, {\it Cubic differentials and hyperbolic convex sets}, Journal of Differential Geometry 98.1 (2014): 1-19.
\bibitem{BB1} R. Berman and S. Boucksom, {\it Growth of balls of holomorphic sections and energy at equilibrium} Inventiones mathematicae, 181(2), 337-394 (2010).
\bibitem{BS1} T. Bridgeland and I. Smith, {\it Quadratic differentials as stability conditions}, Publications math\'ematiques de l'IH\'ES 121 (2015): 155-278.
\bibitem{CY1} S. Y. Cheng and S. T. Yau, {\it Differential equations on Riemannian manifolds and their geometric applications}, Communications on Pure and Applied Mathematics 28.3 (1975): 333-354.
\bibitem{CT1} H. I. Choi and A. Treibergs, {\it Gauss maps of spacelike constant mean curvature hypersurfaces of Minkowski space}, Journal of differential geometry 32.3 (1990): 775-817.
\bibitem{Col1} B. Collier, {\it Finite order automorphisms of Higgs bundles: theory and application}, (Doctoral dissertation, University of Illinois at Urbana-Champaign).
\bibitem{CL1} B. Collier and Q. Li, {\it Asymptotics of Higgs bundles in the Hitchin component}, Advances in Mathematics 307 (2017): 488-558.
\bibitem{DL1} S. Dai and Q. Li, {\it Minimal surfaces for Hitchin representations}, Journal of Differential Geometry 112.1 (2019): 47-77.
\bibitem{DL2} S. Dai and Q. Li, {\it On cyclic Higgs bundles}, Math. Ann. 376 (2020), no. 3-4, 1225-1260.
\bibitem{DL3} S. Dai and Q. Li, {\it Bounded differentials on the unit disk and the associated geometry}, Transactions of the American Mathematical Society (2024).
\bibitem{Don1} S. K. Donaldson, {\it Boundary value problems for Yang-Mills fields}, Journal of geometry and physics 8.1-4 (1992): 89-122.
\bibitem{DW1} D. Dumas and M. Wolf, {\it Polynomial cubic differentials and convex polygons in the projective plane}, Geometric and Functional Analysis 25.6 (2015): 1734-1798.
\bibitem{FKMR1} O. El-Fallah, K. Kellay, J. Mashreghi, and T. Ransford, {\it A primer on the Dirichlet space} (Vol. 203). Cambridge University Press (2014).
\bibitem{GZ1} V. Guedj and A. Zeriahi, {\it Degenerate Complex Monge-Amp\`ere Equations}, EMS Tracts Math. 26, European Mathematical Society (EMS), Z\"urich 2017.
\bibitem{GL1} M. A. Guest and C.-S. Lin, {\it Nonlinear PDE aspects of the $tt^\ast$ equations of Cecotti and Vafa}, J. Reine Angew. Math. 689 (2014), 1-32.
\bibitem{GH1} M. A. Guest and N. K. Ho, {\it Kostant, Steinberg, and the Stokes matrices of the $tt^\ast$-Toda equations}, Selecta Mathematica 25.3 (2019): 50.
\bibitem{Hit1} N. J. Hitchin, {\it The self-duality equations on a Riemann surface}, Proceedings of the London Mathematical Society 3.1 (1987): 59-126.
\bibitem{Hit2} N. J. Hitchin, {\it Lie groups and Teichm\"uller space}, Topology, 31(3), 449-473 (1992).
\bibitem{JT1} A. Jaffe and C. Taubes, {\it Vortices and monopoles}, vol. 2 of Progress in Physics (1980).
\bibitem{KH1} A. Katok and B. Hasselblatt, {\it Introduction to the modern theory of dynamical systems (No. 54)}, Cambridge university press (1995). 
\bibitem{Kus1} Y. Kusunoki, {\it Integrable holomorphic quadratic differentials with simple zeros}, Holomorphic Functions and Moduli I: Proceedings of a Workshop held March 13-19, 1986. Springer US, 1988.
\bibitem{LT1} F. Labourie and J. Toulisse, {\it Quasicircles and quasiperiodic surfaces in pseudo-hyperbolic spaces}, Inventiones mathematicae 233.1 (2023): 81-168.
\bibitem{LTW1} F. Labourie, J. Toulisse, and M. Wolf, {\it Plateau problems for maximal surfaces in pseudo-hyperbolic spaces}, arXiv:2006.12190 ver 3 (2020).
\bibitem{LL1} L. D. Landau and E. M. Lifshitz, {\it Statistical Physics: Volume 5}, Vol. 5. Elsevier, 2013.
\bibitem{Lei1} T. Leinster, {\it Entropy and Diversity: The Axiomatic Approach}, Cambridge University Press (2020).
\bibitem{Li0} Q. Li, {\it Harmonic maps for Hitchin representations}, Geometric and Functional Analysis 29 (2019): 539-560.
\bibitem{Li1} Q. Li, {\it On the uniqueness of vortex equations and its geometric applications}, The Journal of Geometric Analysis 29 (2019): 105-120.
\bibitem{LM1} Q. Li and T. Mochizuki, {\it Complete Solutions of Toda Equations and Cyclic Higgs Bundles Over Non-compact Surfaces}, International Mathematics Research Notices 2025.7 (2025): rnaf081.
\bibitem{LM2} Q. Li and T. Mochizuki, {\it Isolated singularities of Toda equations and cyclic Higgs bundles}, arXiv:2010.06129 ver 3 (2020).
\bibitem{LM3} Q. Li and T. Mochizuki, {\it Higgs bundles in the Hitchin section over non‐compact hyperbolic surfaces}, Proceedings of the London Mathematical Society 129.6 (2024): e70008.
\bibitem{Lof1} J. C. Loftin, {\it Flat metrics, cubic differentials and limits of projective holonomies}, Geometriae Dedicata, 128 (2007), 97-106.
\bibitem{LoTW1} J. Loftin, A. Tamburelli, and M. Wolf., {\it Limits of cubic differentials and buildings}, arXiv:2208.07532 (2022).
\bibitem{Ma1} W. Ma, {\it Harmonic metrics of $\mathrm {SO} _ {0}(n, n) $-Higgs bundles in the Hitchin section on non-compact hyperbolic surfaces}, arXiv:2408.15278 ver 1 (2024).
\bibitem{Mas1} H. Masur, {\it The growth rate of trajectories of a quadratic differential}, Ergodic Theory and Dynamical Systems 10.1 (1990): 151-176.
\bibitem{Miy2} N. Miyatake, {\it Generalizations of Hermitian-Einstein equation of cyclic Higgs bundles, their heat equation, and inequality estimates}, arXiv:2301.01584 ver2 (2023).
\bibitem{Miy3} N. Miyatake, {\it Cyclic Higgs bundles, subharmonic functions, and the Dirichlet problem}, Annals of Global Analysis and Geometry 67.1 (2025): 1-16.
\bibitem{Miy4} N. Miyatake, {\it Shannon entropy for harmonic metrics on cyclic Higgs bundles II}, preprint.
\bibitem{Miy5} N. Miyatake, {\it Complete harmonic metrics and subharmonic functions on the unit disc}, preprint. 
\bibitem{Moc0} T. Mochizuki,{\it Harmonic bundles and Toda lattices with opposite sign I}, arXiv:1301.1718.
\bibitem{Moc1} T. Mochizuki, {\it Harmonic bundles and Toda lattices with opposite sign II.}, Communications in Mathematical Physics 328 (2014): 1159-1198.
\bibitem{Nie1} X. Nie, {\it Poles of cubic differentials and ends of convex $\mathbb {RP}^ 2$-surfaces}, Journal of Differential Geometry 123.1 (2023): 67-140.
\bibitem{PW1} Y. Polyanskiy and Y. Wu, {\it Lecture notes on information theory}, Lecture Notes for ECE563 (UIUC) and 6.2012-2016 (2014): 7.
\bibitem{Ran1} T. Ransford, {\it Potential theory in the complex plane}, No. 28. Cambridge university press, 1995.
\bibitem{ST1} E. B. Saff and V. Totik, {\it Logarithmic potentials with external fields}, Vol. 316. Springer Science \& Business Media, 2013.
\bibitem{Sak1} K. Sakai, {\it Uniform degeneration of hyperbolic surfaces with boundary along harmonic map rays}, arXiv:2405.0485 ver2 (2024).
\bibitem{Ser1} S. Serfaty, {\it Coulomb gases and Ginzburg-Landau vortices}, arXiv:1403.6860 ver 4 (2014).
\bibitem{Ser2} S. Serfaty, {\it Systems with Coulomb interactions}, Journ\'ees \'equations aux d\'eriv\'ees partielles (2014): 1-23.
\bibitem{Sh1} C. E. Shannon, {\it A mathematical theory of communication}, The Bell system technical journal, 27(3), 379-423 (1948).
\bibitem{SZ1} B. Shiffman and S. Zelditch, {\it Stochastic K\" ahler geometry: from random zeros to random metrics}, arXiv:2303.11559 ver 1 (2023).
\bibitem{Sim1} C. T. Simpson, {\it Constructing variations of Hodge structure using Yang-Mills theory and applications to uniformization}, Journal of the American Mathematical Society (1988): 867-918.
\bibitem{Sim2} C. T. Simpson, {\it Harmonic bundles on noncompact curves}, Journal of the American Mathematical Society 3.3 (1990): 713-770.
\bibitem{Sim3} C. T. Simpson, {\it Katz's middle convolution algorithm}, arXiv preprint math/0610526. ver2 (2006).
\bibitem{St1} K. Strebel, {\it Quadratic differentials}, Springer Berlin Heidelberg, 1984.
\bibitem{Wan1} T. Y.-H. Wan, {\it Constant mean curvature surface, harmonic maps, and universal Teichm\"uller space}, J. Differential Geom.35 (1992), no.3, 643-657.
\bibitem{WA1} T. Y.-H. Wan and T. K.-K. Au, {\it Parabolic constant mean curvature spacelike surfaces}, Proceedings of the American Mathematical Society 120.2 (1994): 559-564.
\bibitem{Wol1} M. Wolf, {\it The Teichm\"uller theory of harmonic maps}, Journal of differential geometry 29.2 (1989): 449-479.
\end{thebibliography}
\end{document}